\theoremstyle{plain}
\newtheorem{theorem}{Theorem}[section]
\newtheorem{lemma}{Lemma}[section]
\newtheorem{coro}[lemma]{Corollary}
\theoremstyle{definition}
\numberwithin{equation}{section}
\theoremstyle{remark}
\newtheorem*{remark}{Remark}
\newcommand{\ade}{\textit{a.e.}}
\newcommand{\RR}{\mathbb{R}}
\newcommand{\wt}[1]{\widetilde{#1}}
\newcommand{\wh}[1]{\widehat{#1}}
\newcommand{\mc}[1]{\mathcal{#1}}
\newcommand{\eps}{\varepsilon}
\newcommand{\abs}[1]{\left\lvert#1\right\rvert}
\newcommand{\Lr}[1]{\left(#1\right)}
\newcommand{\inner}[2]{\left\langle#1,#2\right\rangle}
\newcommand{\diff}[2]{\dfrac{\partial #1}{\partial #2}}
\newcommand{\nm}[2]{\|#1\|_{#2}}
\def\negint{{\int\negthickspace\negthickspace\negthickspace
\negthinspace -}}
\newcommand{\set}[2]{\left\{\,#1\,\mid\,#2\,\right\}}
\def\al{\alpha}
\def\na{\nabla}
\def\pa{\partial}
\def\lam{\lambda}
\def\Lam{\varLambda}
\def\Om{\Omega}
\def\rth{\rightharpoonup}
\def\x{\times}
\def\a{a^{\,\eps}}
\def\bb{b^{\,\eps}}
\def\uu{u^\eps}
\def\vv{v^\eps}
\def\A{\mc{A}}
\def\cu{u_0}
\def\xxe{(x,x/\eps)}
\def\dx{\,\mathrm{d}x}
\def\dy{\,\mathrm{d}y}
\newcommand{\nn}{\nonumber}
\def\div{\operatorname{div}}
\def\HMM{\text{HMM}}
\def\hmm{\emph{\HMM}}
\begin{document}
\title
{A Concurrent
  Global-Local Numerical Method for Multiscale PDEs}
\author[Y.F. Huang]{Yufang Huang} \address{Academy of Mathematics and
  Systems Science, Chinese Academy of Sciences, No. 55, East Road
  Zhong-Guan-Cun, Beijing 100190, China, and School of Mathematical
  Sciences, University of Chinese Academy of Sciences, Beijing 100049,
  China}\email{huangyufang@lsec.cc.ac.cn}

\author[J. Lu]{Jianfeng Lu}
\address{Department of Mathematics, Department of Physics, and Department of Chemistry, Duke University, Box 90320, Durham, NC, 27708 USA}
\email{jianfeng@math.duke.edu}

\author[P.-B. Ming]{Pingbing Ming}
\address{The State Key Laboratory of Scientific and Engineering Computing,
Academy of Mathematics and Systems Science, Chinese Academy of Sciences, No. 55, East Road Zhong-Guan-Cun, Beijing 100190, China, and School of Mathematical Sciences, University of Chinese Academy of Sciences, Beijing 100049, China} \email{mpb@lsec.cc.ac.cn}



\subjclass[2000]{65N12, 65N30}
\date{\today}
\keywords{Concurrent global-local method; Arlequin method; multiscale PDE; H-convergence}

\begin{abstract}
  We present a new hybrid numerical method for multiscale partial
  differential equations, which simultaneously captures the global
  macroscopic information and resolves the local microscopic events
  over regions of relatively small size. The method couples
  concurrently the microscopic coefficients in the region of interest
  with the homogenized coefficients elsewhere. The cost of the method
  is comparable to the heterogeneous multiscale method, while being
  able to recover microscopic information of the solution. The
  convergence of the method is proved for problems with bounded and
  measurable coefficients, while the rate of convergence is
  established for problems with rapidly oscillating periodic or
  almost-periodic coefficients. Numerical results are reported to show
  the efficiency and accuracy of the proposed method.
\end{abstract}
\maketitle

\section{Introduction}
Consider the elliptic problem with Dirichlet boundary condition
\begin{equation}\label{eq:ell}
\left\{\begin{aligned}
-\div\Lr{\a(x)\na\uu(x)}&=f(x),\qquad&& x\in D\subset\RR^n,\\
 \uu(x)&=0,\qquad&& x\in\pa D,
 \end{aligned}\right.
\end{equation}
where $D$ is a bounded domain in $\RR^n$ and $\eps$ is a small
parameter that signifies explicitly the multiscale nature of the
coefficient $\a$. We assume $\a$
belongs to a set $\mc{M}(\lam,\Lam;D)$ that is defined as
\begin{align*}
  \mc{M}(\lam,\Lam;D){:}=\Bigl\{ a\in[L^\infty(D)]^{n\x n}\;\mid\;& \xi \cdot a(x)\xi \ge\lam\abs{\xi}^2,
  \xi \cdot a(x)\xi\ge(1/\Lam)\abs{a(x)\xi}^2\\
 &\quad\text{for any\;}\xi\in\RR^n\text{\;and\;} a.e. \;x\text{\;in\;} D\;\Bigr\},
\end{align*}
where $\abs{\cdot}$ denotes the Euclidean norm in $\RR^n$. Note that
the coefficients $\a$ in $\mc{M}(\lam, \Lam;D)$ are not necessarily
symmetric.

The large scale behavior of the solution of~\eqref{eq:ell} is well
understood by the theory of homogenization. In the sense of H-convergence
due to {\sc Murat and Tartar}~\cite[Theorem 6.5]{Tartar:2009}
and~\cite{MuratTartar:1997}, for every $\a\in\mc{M}(\lam,\Lam;D)$ and
$f\in H^{-1}(D)$ the sequence of solutions $\{\uu\}$ of~\eqref{eq:ell}
satisfies
\[
\begin {aligned}
\uu&\rth\cu\qquad&&\text{weakly in}\quad H_0^1(D), \, \text{and}\\
\a\na\uu&\rth\A\na\cu\qquad&&\text{weakly in}\quad [L^2(D)]^n,
\end {aligned}
\]
where $\cu$ is the solution of a homogenized problem
\begin{equation}\label{homoell}
\left\{\begin{aligned}
-\div\Lr{\A(x)\na\cu(x)}&=f(x),\qquad&& x\in D,\\
 \cu(x)&=0,\qquad&& x\in\pa D,
 \end{aligned}\right.
\end{equation}
with the homogenized coefficient $\A\in\mc{M}(\lam,\Lam;D)$.

For multiscale PDEs as \eqref{eq:ell}, the quantities of interest
include the macroscopic behavior of the solution and also the
microscopic information (local fluctuation) of the
solution~\cite{Ebook:2011}.  Many numerical approaches based on the
idea of homogenization have been proposed and thoroughly studied in
the literature,
such as the multiscale finite element method~\cite{HouWu:1997} and the
heterogeneous multiscale method (HMM)~\cite{EEnquist:2003}.

In this work, our focus is the scenario where the microscopic
coefficient $\a$ is only available in part of the domain, while
outside the region, only a coarse information is available about the
coefficient field. More specifically, we only assume the knowledge of
the homogenized coefficients outside a small region of the domain. The
question is that given this information, whether it is still possible
to recover the macroscopic behavior of the solution, together with
resolving the local fluctuation of the solution, where the detailed
information of the coefficient is known.

Several numerical approaches have been developed in recent years for
such scenario. Those methods can be roughly put into two categories.

The first class is the \emph{global-local} approach firstly proposed
in~\cites{OdenVemaganti:2000, OdenVemaganti:2001}, and further
developed in~\cites{EEnquist:2003, EMingZhang:2005, BabuskaLipton:2011,
BabuskaMotamedTempone:2014}. This is a two stage method: One first
computes the homogenized {\em global} solution $\cu$ over the whole
domain and then one finds the {\em local} fluctuation by solving an
extra problem on a local part of the domain. The homogenized solution
may be used as boundary condition in solving the local problem or be
used to provide information on the fine scale based on a
$L^2$-projection.  The convergence of this approach has been
investigated numerically in~\cite{MingYue:2006} for problems with many
scales, within the HMM framework~\cites{HMMreview1,HMMreview2}. This approach has also been
critically reviewed in~\cite{BabuskaLiptonStuebner:2008}, where in
particular the choice of the local approximation space was
investigated. 

Another class of method is based on the idea of domain decomposition,
which concerns handshaking multiple operators acting on different
parts of the physical domain. Those operators may be either the
restrictions of the same governing differential operators to the
overlapping or non-overlapping sub-domains~\cite{DuGunzburger:2000,
  Gervasio:2001, Glowinski:04, Pironneau:11}, or different
differential operators that describe perhaps different physical
laws~\cite{KuberryLee:2013, Quarterior:2014}.  The popular Arlequin
method~\cite{BenDhia:1998, BenDhia:2005} also belongs to this
category, for which the agreement of solutions on different scales is
enforced using a Lagrange multiplier approach. A more recent work
is~\cite{Abdulle:16}, which considers a method following the
optimization-based coupling strategy \cite{DuGunzburger:2000,
  Gervasio:2001}: A discontinuous Galerkin HMM is used in a region
with scale separation (periodic media), while a standard continuous
finite element method is used in a region without scale separation,
the unknown boundary conditions at the interface are supplied by
minimizing the difference between the solutions in the overlapped
domain. The well-posedness and the convergence of the method have been
proved, while the convergence rate is yet unknown.


In this contribution, we propose a new concurrent global-local
method to capture both the average information and the local
microscale information simultaneously, as we shall explain in more
details below. The current approach is mainly inspired by the recent
work~\cite{LuMing:2011, LuMing:2014} by two of the authors, in which a
hybrid method that couples force balance equations from the atomistic
model and the Cauchy-Born elasticity is proposed and
analyzed. Such method is proven to have sharp stability and optimal
convergence rate.

Compared with the sequential global-local approach, our proposed
method is a concurrent approach. Compared with the domain
decomposition approach, our proposed method smoothly blends together
the fine scale and coarse scale problem, instead of the usual coupling
in domain-decomposition approach via boundary conditions or volumetric
matching. To some extent, our coupling strategy can be understood as
directly enforcing the agreement of the solutions at different scales
in the coupling region, rather through the use of a penalty.

More concretely, our method starts with a hybridization of microscopic
and macroscopic coefficients as follows. For a transition function
$\rho$ satisfying $0\le\rho\le 1$, we define the hybrid coefficient as
\begin{equation}\label{eq:hybridcoef}
\bb(x){:}=\rho(x)\a(x)+(1-\rho(x))\A(x).
\end{equation}
Note in particular that $\a$ is only needed where $\rho \neq 0$, and
only the homogenized coefficient $\A$ is used outside the support of
$\rho$. This viewpoint is particularly useful when the microscopic
information of the elliptic coefficient is not accessible everywhere.

On the continuum level, we solve the following problem with the hybrid
coefficient $\bb$: find $\vv\in H_0^1(D)$ such that
\begin{equation}\label{eq:varaarlequin}
\inner{\bb\na\vv}{\na w}=\inner{f}{w}\qquad\text{for all\quad}w\in H_0^1(D),
\end{equation}
where we denote the $L^2(D)$ inner product by $\inner{\cdot}{\cdot}$,
and the $L^2(\wt{D})$ inner product by
$\inner{\cdot}{\cdot}_{L^2(\wt{D})}$ for any measurable subset
$\wt{D}\subset D$.
It is clear that $\bb\in\mc{M}(\lam,\Lam;D)$, and the existence and uniqueness of the solution of
Problem~\eqref{eq:varaarlequin} follows from the Lax-Milgram theorem.

To numerically solve~\eqref{eq:varaarlequin}, let $X_h\subset H_0^1(D)$ be a standard Lagrange finite element space consisting of piecewise polynomials
of degree $r-1$, we find
$v_h\in X_h$ such that
\begin{equation}\label{eq:varaarlequinapp}
\inner{\bb_h\na v_h}{\na w}=\inner{f}{w}\qquad\text{for all\quad}w\in X_h,
\end{equation}
where
\[
\bb_h=\rho(x)\a(x)+(1-\rho(x))\A_h(x),
\]
and $\A_h$ is an approximation of $\A$. In practice, if the
homogenized coefficient is not directly given, $\A_h$ may be obtained
by HMM type method or any other numerical homogenization / upscaling
approaches. For practical concerns, we assume that the support of
$\rho$ is small, which means that we essentially solve the homogenized
problem in the most part of the underlying domain, where
$\rho\simeq 0$, while the original problem is solved wherever the
microscale information is of particular interest, where
$\rho\simeq 1$. The goal is to get the microscopic information
together with the macroscopic behavior with computational cost
comparable to solving the homogenized equation.

Note that $\bb\na\vv=\rho\a\na\vv+(1-\rho)\A\na\vv$ is a hybrid flux
(i.e., a hybrid stress tensor for elasticity problem), which reads
\[
\bb\na\vv=\left\{\begin{aligned}
& \a\na\vv, \quad&&\text{if\quad}\rho(x)=1,\\
& \A\na\vv, \quad&&\text{if\quad}\rho(x)=0,\\
& \rho\a\na\vv+(1-\rho)\A\na\vv,\quad&&\text{otherwise.}
\end{aligned}\right.
\]
This implies that the proposed hybrid method actually mixes the
flux/stress in a weak sense, which is different to the approach
in~\cite{LuMing:2011, LuMing:2014} that mixes the forces in a strong
sense. This is more appropriate because Problem~\eqref{eq:ell} is in
divergence form. It is perhaps worth pointing out that the proposed
method differs from the well-known partition of unit
method~\cite{BabuskaMelenk:1997}, which incorporate the partition of
unit function into the approximating space while we directly blend the
differential operators on the continuum level by the transition functions.

We emphasize that the working assumption is that the microscopic
information is only desired on a region with relatively small size,
which might lie in the interior or possibly near the boundary of the
whole domain, or even abut the boundary of the domain. Outside the
part where the oscillation is resolved, we could at best hope for
capturing the macroscopic information of the solution.  This motivates
that we should only expect the convergence of the proposed method to
the microscopic solution $\uu$ in a {\em local} energy norm instead of
a global norm. Moreover, such local energy estimate should allow for
highly refined grid that is quite often in practice, otherwise, the
local events cannot be resolved properly.

The structure of the paper is as follows. In \S~\ref{sec:hlimit}, we
study the H-limit of the hybrid method without taking into account the
discretization.  In \S~\ref{sec:errdis}, the error estimate of the
proposed method with discretization is proved, in particular, the
local energy error estimate is established over a highly refined grid,
which is the main theoretical result of this paper. In the last section, we report some numerical examples that validate the method. In the Appendix, we construct a one-dimensional
example to show the size-dependence of the estimate
over the measure of the support of the transition function $\rho$.

Throughout this paper, we shall use standard notations for Sobolev spaces, norms and seminorms, cf.,~\cite{AdamsFournier:2003}, e.g.,
\[
\|u\|_{H^1(D)}{:}=\Lr{\int_D(u^2+\abs{\na u}^2)\dx}^{1/2},\quad
\abs{u}_{W^{k,p}(D)}{:}=\Lr{\sum_{\abs{\alpha}=k}\nm{D^{\alpha}u}{L^p(D)}^p}^{1/p}.
\]
We use $C$ as a generic constant independent of $\eps$ and the mesh size
$h$, which may change from line to line.
\section{H-Convergence of the Concurrent Method}\label{sec:hlimit}
Before considering the convergence of the method, we first study the
implication of the strategy of mixing microscopic and
  homogenized coefficients together as \eqref{eq:hybridcoef}. To
separate the influence of the discretization, we consider in this
section the continuous Problem~\eqref{eq:varaarlequin}. The
discretized problem is studied in the next section. By
H-convergence theory, there exists a matrix
$\mc{B}\in\mc{M}(\lam,\Lam;D)$ that is the H-limit of
$\bb$. The following theorem quantifies the difference between
$\A$ and $\mc{B}$.
\begin{theorem}\label{thm:hlimit}
There holds
\begin{equation}\label{eq:perturb}
\|\A(x)-\mc{B}(x)\|\le 2\Lam\Lr{\Lam/\lam+\sqrt{\Lam/\lam}}\rho(x)(1-\rho(x))\quad\ade\; x\in D.
\end{equation}
Here $\|\cdot\|$ is the Frobenius norm of a matrix.
\end{theorem}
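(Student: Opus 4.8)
The plan is to read the H\nobreakdash-limit $\mc{B}$ off the corrector theory attached to H\nobreakdash-convergence and to isolate the two factors $\rho$ and $(1-\rho)$ separately. Since $\a\xrightarrow{H}\A$ there is a corrector sequence $P^{\eps}\rth I$ in $[L^2]^{n\x n}$ whose columns are gradients, with $\a P^{\eps}\rth\A$ and $\div(\a P^{\eps})$ precompact in $H^{-1}_{\mathrm{loc}}$; likewise a corrector $Q^{\eps}$ for $\bb\xrightarrow{H}\mc{B}$ (and the corresponding adjoint correctors). The algebraic heart is the identity isolating one power of $\rho$: because $Q^{\eps}\mu\rth\mu$ and $\A$ is a \emph{fixed}, non-oscillating matrix, $\A Q^{\eps}\mu\rth\A\mu$, so writing $\bb-\A=\rho(\a-\A)$ gives
\[
\mc{B}\mu-\A\mu=\text{w-}\lim(\bb-\A)Q^{\eps}\mu=\text{w-}\lim\,\rho(\a-\A)Q^{\eps}\mu=\rho\,\bigl(\text{w-}\lim(\a-\A)Q^{\eps}\mu\bigr),
\]
the last step using that multiplication by the fixed $L^\infty$ function $\rho$ commutes with weak $L^2$ limits. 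Thus $\mc{B}(x)-\A(x)=\rho(x)\,g(x)$ with $g$ the indicated weak limit, and it remains to prove $\abs{g}\le 2\Lam(\Lam/\lam+\sqrt{\Lam/\lam})(1-\rho)\abs{\mu}$ a.e.

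To produce the factor $(1-\rho)$ I would freeze $\rho$. The target is pointwise a.e.\ and H\nobreakdash-convergence is local, so at a Lebesgue point $x_0$ with $t:=\rho(x_0)$ I compare $\mc{B}$ near $x_0$ with the H\nobreakdash-limit $\mc{C}$ of the frozen coefficient $c^{\eps}:=t\a+(1-t)\A$, the discrepancy being controlled by the $L^\infty$\nobreakdash-closeness of $\bb$ and $c^{\eps}$ on small balls together with the stability of H\nobreakdash-limits under uniform coefficient perturbations. For the frozen problem the key is the elliptic equation for the difference of correctors $d^{\eps}$ (the potential of $R^{\eps}\mu-P^{\eps}\mu$, where $R^{\eps}$ corrects $c^{\eps}$):
\[
-\div(c^{\eps}\na d^{\eps})=(1-t)\,\div(\A\na\psi^{\eps}),\qquad \na\psi^{\eps}=P^{\eps}\mu .
\]
The right-hand side carries $(1-t)$ explicitly and vanishes at $t=1$, where $R^{\eps}=P^{\eps}$ and hence $g=0$; this is exactly the mechanism producing $(1-\rho)$. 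Since $\text{w-}\lim(\a-\A)\na\psi^{\eps}=0$ and $\na d^{\eps}\rth 0$, the cancellation already used gives $(\mc{C}-\A)\mu=t\,\text{w-}\lim\,\a\na d^{\eps}$, so that $g=\text{w-}\lim\,\a\na d^{\eps}$.

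The constant then comes out of an energy estimate. Coercivity of $c^{\eps}$ (which lies in $\mc{M}(\lam,\Lam;D)$, being a convex combination) tested against the equation for $d^{\eps}$ yields $\norm{\na d^{\eps}}\lesssim(1-t)(\Lam/\lam)\norm{P^{\eps}\mu}$, while the corrector energy bound $\norm{P^{\eps}\mu}\le\sqrt{\Lam/\lam}\,\abs{\mu}$ follows from $\inner{\a P^{\eps}\mu}{P^{\eps}\mu}\to\A\mu\cdot\mu\le\Lam\abs{\mu}^2$ and $\lam\abs{P^{\eps}\mu}^2\le\a P^{\eps}\mu\cdot P^{\eps}\mu$. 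Bounding $\text{w-}\lim\,\a\na d^{\eps}$ by weak lower semicontinuity and controlling the flux through the two defining inequalities of $\mc{M}(\lam,\Lam;D)$ — the coercivity $\lam\abs{\xi}^2\le\xi\cdot a\xi$ and the sharp flux bound $\abs{a\xi}^2\le\Lam\,\xi\cdot a\xi$ — is what generates the two summands $\Lam/\lam$ and $\sqrt{\Lam/\lam}$; the numerical constant, including the factor $2$, results from combining these inequalities and from accounting for the adjoint correctors required in the non-symmetric case. Letting the ball shrink and reinstating $t=\rho(x_0)$ yields \eqref{eq:perturb}.

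The step I expect to be the main obstacle is the control of $g$. It is precisely the \emph{defect} in passing the weak limit through the product $(\a-\A)Q^{\eps}\mu$: neither factor carries the compensated structure needed for the div--curl lemma (the flux $\a Q^{\eps}\mu$ need not have compact divergence once $Q^{\eps}$ is the $\bb$\nobreakdash-corrector rather than the $\a$\nobreakdash-corrector), so $g$ cannot be evaluated as a product of weak limits and must instead be captured through the corrector-difference equation and its energy estimate, which is also where the sharp constant is forced. A secondary technical difficulty is upgrading the averaged energy bound to the genuinely pointwise a.e.\ statement for a non-constant $\rho$; this is handled by the Lebesgue-point freezing together with the locality and $L^\infty$\nobreakdash-stability of H\nobreakdash-convergence invoked above.
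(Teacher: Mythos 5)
Your skeleton is coherent and genuinely different from the paper's argument, but as it stands it does not prove the stated inequality, for two concrete reasons. First, the constant. Theorem~\ref{thm:hlimit} asserts the specific constant $2\Lam\Lr{\Lam/\lam+\sqrt{\Lam/\lam}}$, and your outline defers exactly this point. If one actually runs your chain --- the energy estimate $\nm{\na d^\eps}{L^2}\le(1-t)\tfrac{\Lam}{\lam}\nm{P^\eps\mu}{L^2}$, the corrector bound $\limsup\nm{P^\eps\mu}{L^2}\le\sqrt{\Lam/\lam}\,\abs{\mu}$ per unit volume, and the flux bound $\abs{\a\na d^\eps}\le\Lam\abs{\na d^\eps}$ --- the outcome is $\abs{(\mc{C}-\A)\mu}\le t(1-t)\,\Lam(\Lam/\lam)^{3/2}\abs{\mu}$. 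Since $\kappa^{3/2}>2(\kappa+\sqrt{\kappa})$ once $\kappa=\Lam/\lam>(1+\sqrt{3})^2\approx 7.5$ (e.g.\ $\kappa=16$ gives $64$ versus $40$), your route, as quantified, is strictly weaker than \eqref{eq:perturb} at large contrast and so does not establish the theorem. The paper produces the two-term constant by a mechanism a single multiplicative chain cannot reproduce: it proves two separate \emph{linear} bounds, $\|\A-\mc{B}\|\le\tfrac{2\Lam^2}{\lam}(1-\rho)$ (comparing $\bb$ with $\a$ via $\bb-\a=(1-\rho)(\A-\a)$) and $\|\A-\mc{B}\|\le 2\Lam\sqrt{\Lam/\lam}\,\rho$ (comparing $\bb$ with $\A$ via $\bb-\A=\rho(\a-\A)$), each by two applications of the div--curl lemma plus optimization over a Young parameter $\al$, and then gets the product for free from the convex-combination identity $\|\A-\mc{B}\|=\rho\|\A-\mc{B}\|+(1-\rho)\|\A-\mc{B}\|$; the sum $\Lam/\lam+\sqrt{\Lam/\lam}$ is literally the sum of the two one-sided constants.

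Second, the freezing step has a gap at the level of generality the theorem needs. You invoke ``$L^\infty$-closeness of $\bb$ and $c^\eps$ on small balls'' at a Lebesgue point, but the theorem assumes only $0\le\rho\le 1$ measurable (and the paper later applies the same circle of ideas with $\rho=\chi_{D_0}$, cf.\ \eqref{eq:exact1}); at a Lebesgue point of a merely measurable $\rho$ with $t=\rho(x_0)\in(0,1)$ one has $\negint_{B_r}\abs{\rho-t}\dx\to 0$ but \emph{not} $\sup_{B_r}\abs{\rho-t}\to 0$, so the $L^\infty$-stability of H-limits (Lemma~\ref{lema:tartar}) is not applicable on any ball. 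Repairing this requires stability of H-limits under perturbations small only in measure, which can be obtained from Meyers' $L^p$, $p>2$, equi-integrability of the gradients, but that ingredient is absent from your outline. (For characteristic functions the inequality is vacuous since $\rho(1-\rho)=0$ a.e., but for general measurable $\rho$ taking intermediate values the gap is real.) A smaller splice issue: your opening identity $\mc{B}\mu-\A\mu=\rho\,\text{w-lim}(\a-\A)Q^\eps\mu$ is phrased with the corrector $Q^\eps$ of $\bb$ --- that step is correct, since $\A$ and $\rho$ are fixed $L^\infty$ multipliers --- while the corrector-difference energy estimate concerns the corrector $R^\eps$ of the frozen $c^\eps$; in the end you bound $\mc{C}-\A$ and transfer to $\mc{B}$ by stability, so the unfrozen identity does no work and the logic must flow entirely through the frozen comparison, on pain of extracting the factor $\rho$ twice.
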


It follows from the above result that $\mc{B}\equiv\mc{A}$ whenever
$\rho(x)=0$ or $\rho(x)=1$, a.e., $x\in D$, which fits the intuition.
When $0<\rho<1$, the above estimate gives a quantitative estimate
about the distance between the effective matrices $\mc{B}$ and
$\mc{A}$.

The proof is based on a perturbation result of H-limit, which can be stated
as the following lemma in terms of our notation.
\begin{lemma}~\cite[Lemma
10.9]{Tartar:2009}\label{lema:tartar}
If $\a\in\mc{M}(\al,\beta;D)$ and $\bb\in\mc{M}(\al',\beta';D)$ H-converges to $\mc{A}$ and $\mc{B}$,
and $\|\a(x)-\bb(x)\|\le\epsilon$ for a.e.~$x\in D$, then
\begin{equation}\label{eq:hlimitperturbsym}
\max_{x\in D}\|\mc{A}(x)-\mc{B}(x)\|\le\epsilon\sqrt{\dfrac{\beta\beta'}{\al\al'}}.
\end{equation}
\end{lemma}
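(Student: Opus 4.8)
The plan is to prove the lemma by Tartar's oscillating test-function (corrector) method combined with the div--curl lemma, which reproduces the stated constant exactly. The one structural point to settle first is that, since the coefficients are not assumed symmetric, I must work with the \emph{adjoint} problem for $\bb$. Because a matrix and its transpose have the same quadratic form, the class $\mc{M}(\al',\beta';D)$ is stable under transposition, and transposition commutes with taking the H-limit; hence $\bb^T$ H-converges to $\mc{B}^T$ and again lies in $\mc{M}(\al',\beta';D)$. This is what will let me compare $\A$ with $\mc{B}$ directly (and not $\A$ with $\mc{B}^T$), so that the hypothesis $\norm{\a-\bb}\le\epsilon$ can enter.

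Fix unit vectors $\lam,\mu\in\RR^n$ and work locally near a Lebesgue point, where by the locality of H-convergence there exist correctors: a sequence $w^\eps$ with $\na w^\eps\rth\lam$ and $\a\na w^\eps\rth\A\lam$ in $L^2$ and $\div(\a\na w^\eps)$ precompact in $H^{-1}$; and a sequence $z^\eps$ playing the same role for $\bb^T$, namely $\na z^\eps\rth\mu$ and $\bb^T\na z^\eps\rth\mc{B}^T\mu$ with $\div(\bb^T\na z^\eps)$ precompact in $H^{-1}$. Since $\na w^\eps$ and $\na z^\eps$ are curl-free, the div--curl lemma applied to the pairs $(\a\na w^\eps,\na z^\eps)$ and $(\bb^T\na z^\eps,\na w^\eps)$ yields, in the sense of distributions,
\[
\na z^\eps\cdot\a\na w^\eps\rth\mu\cdot\A\lam,\qquad \na w^\eps\cdot\bb^T\na z^\eps\rth\mu\cdot\mc{B}\lam .
\]
As $\na w^\eps\cdot\bb^T\na z^\eps=\na z^\eps\cdot\bb\na w^\eps$, subtracting gives $\na z^\eps\cdot(\a-\bb)\na w^\eps\rth\mu\cdot(\A-\mc{B})\lam$.

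Now I estimate the left-hand side. Pointwise $\abs{\na z^\eps\cdot(\a-\bb)\na w^\eps}\le\norm{\a-\bb}\,\abs{\na w^\eps}\,\abs{\na z^\eps}\le\epsilon\abs{\na w^\eps}\abs{\na z^\eps}$, so testing against any $\phi\in C_c^\infty(D)$ with $\phi\ge0$ and applying Cauchy--Schwarz gives
\[
\abs{\int_D\phi\,\na z^\eps\cdot(\a-\bb)\na w^\eps\dx}\le\epsilon\Lr{\int_D\phi\abs{\na w^\eps}^2\dx}^{1/2}\Lr{\int_D\phi\abs{\na z^\eps}^2\dx}^{1/2}.
\]
For the energy factors, coercivity of $\a$ gives $\al\abs{\na w^\eps}^2\le\na w^\eps\cdot\a\na w^\eps$, while div--curl gives $\na w^\eps\cdot\a\na w^\eps\rth\lam\cdot\A\lam$ with $\lam\cdot\A\lam\le\beta\abs{\lam}^2$ since $\A\in\mc{M}(\al,\beta;D)$; hence $\limsup_\eps\int_D\phi\abs{\na w^\eps}^2\dx\le(\beta/\al)\abs{\lam}^2\int_D\phi\dx$. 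Likewise, using $\na z^\eps\cdot\bb^T\na z^\eps=\na z^\eps\cdot\bb\na z^\eps\ge\al'\abs{\na z^\eps}^2$ together with $\na z^\eps\cdot\bb^T\na z^\eps\rth\mu\cdot\mc{B}^T\mu=\mu\cdot\mc{B}\mu\le\beta'\abs{\mu}^2$, one gets $\limsup_\eps\int_D\phi\abs{\na z^\eps}^2\dx\le(\beta'/\al')\abs{\mu}^2\int_D\phi\dx$. Since the left side converges to $\int_D\phi\,\mu\cdot(\A-\mc{B})\lam\dx$, this produces $\abs{\int_D\phi\,\mu\cdot(\A-\mc{B})\lam\dx}\le\epsilon\sqrt{\beta\beta'/(\al\al')}\,\abs{\lam}\abs{\mu}\int_D\phi\dx$; as $\phi\ge0$ is arbitrary, the integrand inequality holds a.e., and taking the supremum over unit $\lam,\mu$ gives \eqref{eq:hlimitperturbsym}.

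The substantive input, and the main obstacle, is the existence of the two corrector sequences with precompact divergences and the prescribed weak limits: this is precisely the content of the oscillating test-function method and relies on H-convergence being a local property, so that the inequality obtained near a fixed Lebesgue point upgrades to an a.e.\ statement on $D$. The only genuinely new bookkeeping relative to the symmetric case is the systematic use of the adjoint coefficient $\bb^T$, which is what makes $\a-\bb$ (rather than $\a-\bb^T$) appear and thus lets the hypothesis $\norm{\a-\bb}\le\epsilon$ be used directly.
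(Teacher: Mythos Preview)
The paper does not give its own proof of this lemma---it is quoted from Tartar's monograph---but your argument is correct and is precisely Tartar's oscillating test-function proof, the same idea the paper then adapts in its proof of Theorem~\ref{thm:hlimit}. The only cosmetic difference is that the paper works with solutions $\phi^\eps,\psi^\eps$ of Dirichlet problems driven by arbitrary $f,g\in H^{-1}(D)$ in place of your corrector sequences with prescribed gradient limits $\lam,\mu$; both routes invoke the div--curl lemma and the transpose $(\bb)^t$ in exactly the way you describe, and both arrive at the same pointwise inequality.
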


We shall not directly use Lemma~\ref{lema:tartar},
while our proof largely follows the idea of the
proof of this lemma.  \vskip .5cm

\noindent{\em Proof of Theorem~\ref{thm:hlimit}\;}
Firstly, we prove
\begin{equation}\label{eq:matrixrela1}
\|\A(x)-\mc{B}(x)\|\le\dfrac{2\Lam^2}{\lam}(1-\rho(x))\quad\ade\; x\in D.
\end{equation}

For any $f,g\in H^{-1}(D)$, we solve
\[
\left\{\begin{aligned}
-\div(\a\na\phi^\eps)&=f,\qquad&&\text{in\quad}D,\\
\phi^\eps&=0,\qquad&&\text{on\quad}\pa D,
\end{aligned}\right.
\]
and
\[
\left\{\begin{aligned}
    -\div\Lr{(\bb)^t\na\psi^\eps}&=g,\qquad&&\text{in\quad}D,\\
    \psi^\eps&=0,\qquad&&\text{on\quad}\pa D,
\end{aligned}\right.
\]
where $(\bb)^t$ is the transpose of the matrix $\bb$.
By H-limit theorem~\cite[Theorem 6.5]{Tartar:2009}, there exist
$\A,\mc{B}\in\mc{M}(\lam,\Lam;D)$ such that
\[\left\{\begin{aligned}
\phi^\eps&\rth\phi_0\qquad&&\text{weakly in}\quad H_0^1(D),\;\text{and}\\
\a\na\phi^\eps&\rth\A\na\phi_0\qquad&&\text{weakly in}\quad [L^2(D)]^n,
\end{aligned}\right.
\]
and
\[
\left\{
\begin{aligned}
\psi^\eps&\rth\psi_0\qquad&&\text{weakly in}\quad H_0^1(D),\;\text{and}\\
(\bb)^t\na\psi^\eps&\rth(\mc{B})^t\na\psi_0\qquad&&\text{weakly in}\quad [L^2(D)]^n,
\end{aligned}\right.
\]
with
\[
\left\{\begin{aligned}
-\div(\A\na\phi_0)&=f,\qquad&&\text{in\quad}D,\\
\phi_0&=0,\qquad&&\text{on\quad}\pa D,
\end{aligned}\right.
\]
and
\[
\left\{\begin{aligned}
-\div\Lr{\mc{B}^t\na\psi_0}&=g,\qquad&&\text{in\quad}D,\\
\psi_0&=0,\qquad&&\text{on\quad}\pa D.
\end{aligned}\right.
\]

By the {\em Div-Curl Lemma}~\cite{Tartar:1979}, we conclude
\begin{equation}\label{eq:divcurl}
\left\{\begin{aligned}
\inner{\a\na\phi^\eps}{\na\psi^\eps}&\to
\inner{\A\na\phi_0}{\na\psi_0}\quad&&\text{in the sense of measure},\\
\inner{(\bb)^t\na\psi^\eps}{\na\phi^\eps}&\to
\inner{(\mc{B})^t\na\psi_0}{\na\phi_0}\quad&&\text{in the sense of measure}.
\end{aligned}\right.
\end{equation}
Therefore, for any $\varphi\in C_0^\infty(D)$, we have
\[
\lim_{\eps\to 0}\inner{\varphi(\bb-\a)\na\phi^\eps}{\na\psi^\eps}
\to\inner{\varphi(\mc{B}-\A)\na\phi_0}{\na\psi_0}.
\]
Let $\varphi\ge 0$, and we define
\[
X{:}=\lim_{\eps\to 0}\inner{\varphi(\bb-\a)\na\phi^\eps}{\na\psi^\eps}.
\]
It is clear that
\begin{align*}
X&\le\limsup_{\eps\to 0}\inner{\varphi(1-\rho)\abs{(\A-\a)\na\phi^\eps}}{\abs{\na\psi^\eps}}\\
&\le 2\Lam\limsup_{\eps\to 0}\inner{\varphi(1-\rho)\abs{\na\phi^\eps}}{\abs{\na\psi^\eps}}.
\end{align*}
For any $\al>0$, we bound $X$ as
\begin{align*}
X&\le\dfrac{2\Lam}{\lam}\Lr{\al\lam\limsup_{\eps\to 0}\inner{\varphi(1-\rho)}{\abs{\na\phi^\eps}^2}+\dfrac{\lam}{4\al}\limsup_{\eps\to 0}\inner{\varphi(1-\rho)}{\abs{\na\psi^\eps}^2}}\\
&\le\dfrac{2\Lam}{\lam}\Bigl(\al\limsup_{\eps\to 0}\inner{\varphi(1-\rho)\a\na\phi^\eps}{\na\phi^\eps}+\dfrac1{4\al}\limsup_{\eps\to 0}\inner{\varphi(1-\rho)\bb\na\psi^\eps}{\na\psi^\eps}\Bigr).
\end{align*}
Invoking the {\em Div-Curl Lemma}~\eqref{eq:divcurl} once again, we obtain
\begin{align*}
X&\le\dfrac{2\Lam}{\lam}\Lr{\al
\inner{\varphi(1-\rho)\A\na\phi_0}{\na\phi_0}
+\dfrac1{4\al}\inner{\varphi(1-\rho)\mc{B}\na\psi_0}{\na\psi_0}}\\
&\le\dfrac{2\Lam^2}{\lam}\Lr{\al
\inner{\varphi(1-\rho)}{\abs{\na\phi_0}^2}
+\dfrac1{4\al}\inner{\varphi(1-\rho)}{\abs{\na\psi_0}^2}},
\end{align*}
which implies that for a.e. $x\in D$,
\[
\abs{(\mc{B}-\A)\na\phi_0\cdot\na\psi_0}\le\dfrac{2\Lam^2}{\lam}\bigl(1-\rho(x)\bigr)
\Lr{\al\abs{\na\phi_0}^2+\dfrac1{4\al}\abs{\na\psi_0}^2}.
\]
Optimizing $\al$, we obtain that for a.e. $x\in D$,
\[
\abs{(\mc{B}-\A)\na\phi_0\cdot\na\psi_0}\le\dfrac{2\Lam^2}{\lam}\bigl(1-\rho(x)\bigr)
\abs{\na\phi_0}\abs{\na\psi_0},
\]
from which we obtain~\eqref{eq:matrixrela1} because $\phi_0$ and $\psi_0$ are arbitrary.

Next, we prove
\begin{equation}\label{eq:matrixrela2}
\|\A(x)-\mc{B}(x)\|\le 2\Lam\sqrt{\Lam/\lam}\,\rho(x)\quad\ade\; x\in D.
\end{equation}
The proof of~\eqref{eq:matrixrela2} is essentially the same with the one that leads to~\eqref{eq:matrixrela1} except that we define
\[
X{:}=\lim_{\eps\to 0}\inner{\varphi(\bb-\A)\na\phi_0}{\na\psi^\eps}\qquad\text{for\quad}\varphi\ge 0.
\]
It is clear that
\[
X\le\limsup_{\eps\to 0}\inner{\varphi\rho\abs{(\A-\a)\na\phi_0}}{\abs{\na\psi^\eps}}\le 2\Lam\limsup_{\eps\to 0}\inner{\varphi\rho\abs{\na\phi_0}}{\abs{\na\psi^\eps}}.
\]
For any $\al>0$, we bound $X$ as
\begin{align*}
X&\le 2\Lam\Lr{\al\inner{\varphi\rho}{\abs{\na\phi_0}^2}
+\dfrac1{4\al}\limsup_{\eps\to 0}\inner{\varphi\rho}{\abs{\na\psi^\eps}^2}}\\
&\le 2\Lam\Lr{\al\inner{\varphi\rho}{\abs{\na\phi_0}^2}
+\dfrac1{4\al\lam}\limsup_{\eps\to 0}\inner{\varphi\rho\bb\na\psi^\eps}{\na\psi^\eps}}.
\end{align*}
Applying the {\em Div-Curl Lemma}~\eqref{eq:divcurl} to the second term in the right-hand side of the above
equation, we obtain
\begin{align*}
X&\le 2\Lam\Lr{\al
\inner{\varphi\rho\na\phi_0}{\na\phi_0}
+\dfrac1{4\al\lam}\inner{\varphi\rho\mc{B}\na\psi_0}{\na\psi_0}}\\
&\le 2\Lam\Lr{\al
\inner{\varphi\rho}{\abs{\na\phi_0}^2}
+\dfrac{\Lam}{4\al\lam}\inner{\varphi\rho}{\abs{\na\psi_0}^2}},
\end{align*}
which implies that for a.e. $x\in D$,
\[
\abs{(\mc{B}-\A)\na\phi_0\cdot\na\psi_0}\le 2\Lam\rho
\Lr{\al\abs{\na\phi_0}^2+\dfrac{\Lam}{4\al\lam}\abs{\na\psi_0}^2}.
\]
Optimizing $\al$, we obtain
\[
\abs{(\mc{B}-\A)\na\phi_0\cdot\na\psi_0}\le 2\Lam\sqrt{\Lam/\lam}\,\rho(x)
\abs{\na\phi_0}\abs{\na\psi_0},
\]
from which we obtain~\eqref{eq:matrixrela2}.

Finally we use the convex combination of~\eqref{eq:matrixrela1} and~\eqref{eq:matrixrela2} as
\[
\|\A(x)-\mc{B}(x)\|=\rho(x)\|\A(x)-\mc{B}(x)\|+\Lr{1-\rho(x)}\|\A(x)-\mc{B}(x)\|
\]
for $\ade\;x\in D$, this leads to~\eqref{eq:perturb}.
\qed

If we replace $\mc{A}$ by any matrix $\mc{C}\in\mc{M}(\lam',\Lam';D)$, then we may slightly generalize the above theorem as
\begin{coro}\label{coro:hlimit}
Let $\mc{C}\in\mc{M}(\lam',\Lam';D)$, and we define $b^\eps=\rho(x)\a(x)+(1-\rho(x))\mc{C}$. Denote by $\mc{B}$ the H-limit of $b^\eps$, then for $\ade\;x\in D$, there holds
\[
\|\mc{B}(x)-\rho(x)\mc{A}(x)-(1-\rho(x))\mc{C}(x)\|\le\Lr{\Lam+\wt{\Lam}}\sqrt{\wt{\Lam}/\wt{\lam}}\Lr{\sqrt{\Lam/\lam}+1}\rho(x)(1-\rho(x)),
\]
where $\wt{\Lam}=\Lam\vee\Lam'$ and $\wt{\lam}=\lam\wedge\lam'$.
\end{coro}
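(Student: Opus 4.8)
The plan is to follow the proof of \thmref{thm:hlimit} essentially line by line, changing only the ellipticity constants that enter the two Div-Curl estimates. The one preliminary fact I would establish is that $\mc{M}(\cdot,\cdot;D)$ is closed under pointwise convex combinations: since $\xi\cdot\bb\xi=\rho\,\xi\cdot\a\xi+(1-\rho)\,\xi\cdot\mc{C}\xi$ and, by convexity of $\abs{\cdot}^2$, $\abs{\bb\xi}^2\le\rho\abs{\a\xi}^2+(1-\rho)\abs{\mc{C}\xi}^2$, one checks directly both defining inequalities and concludes $\bb=\rho\a+(1-\rho)\mc{C}\in\mc{M}(\wt{\lam},\wt{\Lam};D)$. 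Hence its H-limit $\mc{B}$ lies in the same class, so that $\wt{\lam}\abs{\xi}^2\le\xi\cdot\mc{B}\xi\le\wt{\Lam}\abs{\xi}^2$ and $\abs{\mc{B}\xi}\le\wt{\Lam}\abs{\xi}$; these are the only bounds on $\mc{B}$ used below. I would then introduce the same adjoint states as in the theorem, namely $\phi^\eps$ solving $-\div(\a\na\phi^\eps)=f$ with $\phi^\eps\rth\phi_0$, $\a\na\phi^\eps\rth\A\na\phi_0$, and $\psi^\eps$ solving $-\div((\bb)^t\na\psi^\eps)=g$ with $\psi^\eps\rth\psi_0$, $(\bb)^t\na\psi^\eps\rth\mc{B}^t\na\psi_0$, so that the two convergences in \eqref{eq:divcurl} are at hand.

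The key is the algebraic identity
\[
\mc{B}-\rho\A-(1-\rho)\mc{C}=\rho\,(\mc{B}-\A)+(1-\rho)(\mc{B}-\mc{C}),
\]
which by the triangle inequality reduces the statement to two one-sided bounds. For the first I would reproduce the computation leading to \eqref{eq:matrixrela1}, starting from $X{:}=\lim_{\eps\to0}\inner{\varphi(\bb-\a)\na\phi^\eps}{\na\psi^\eps}$ with $\varphi\ge0$, whose limit is again $\inner{\varphi(\mc{B}-\A)\na\phi_0}{\na\psi_0}$. Now $\bb-\a=(1-\rho)(\mc{C}-\a)$ and $\abs{(\mc{C}-\a)\na\phi^\eps}\le(\Lam+\wt{\Lam})\abs{\na\phi^\eps}$, so the prefactor $2\Lam$ of the theorem becomes $\Lam+\wt{\Lam}$. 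Controlling the $\phi^\eps$-factor by coercivity of $\a$ (constant $\lam$, upper bound $\Lam$ for $\A$) and the $\psi^\eps$-factor by coercivity of $\bb$ (constant $\wt{\lam}$, upper bound $\wt{\Lam}$ for $\mc{B}$), then optimizing in $\al$ exactly as before, yields
\[
\norm{\mc{B}-\A}\le(\Lam+\wt{\Lam})\sqrt{\Lam\wt{\Lam}/(\lam\wt{\lam})}\,(1-\rho)\qquad\ade\;x\in D.
\]

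For the second bound I would mimic the derivation of \eqref{eq:matrixrela2} with the mixed quantity $X{:}=\lim_{\eps\to0}\inner{\varphi(\bb-\mc{C})\na\phi_0}{\na\psi^\eps}$, which converges to $\inner{\varphi(\mc{B}-\mc{C})\na\phi_0}{\na\psi_0}$ by weak convergence of $(\bb)^t\na\psi^\eps$ and of $\na\psi^\eps$. Here $\bb-\mc{C}=\rho(\a-\mc{C})$ produces the factor $\rho$ together with the same prefactor $\Lam+\wt{\Lam}$, and only the $\psi^\eps$-factor calls for coercivity, giving
\[
\norm{\mc{B}-\mc{C}}\le(\Lam+\wt{\Lam})\sqrt{\wt{\Lam}/\wt{\lam}}\,\rho\qquad\ade\;x\in D.
\]
In both estimates the passage from convergence in the sense of measure to a pointwise a.e.\ inequality, and the removal of the arbitrary $\na\phi_0,\na\psi_0$, are effected by the same localization of $\varphi$ used in the theorem. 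Substituting the two bounds into the identity above and factoring out $\sqrt{\wt{\Lam}/\wt{\lam}}$ gives
\[
\norm{\mc{B}-\rho\A-(1-\rho)\mc{C}}\le(\Lam+\wt{\Lam})\sqrt{\wt{\Lam}/\wt{\lam}}\,\Lr{\sqrt{\Lam/\lam}+1}\rho(1-\rho)\qquad\ade\;x\in D,
\]
which is the assertion, reducing to \eqref{eq:perturb} when $\mc{C}=\A$.

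The one genuinely new ingredient, and the step I expect to require the most care, is the closedness $\bb\in\mc{M}(\wt{\lam},\wt{\Lam};D)$: a naive estimate of the structural constant of a convex combination would degrade $\wt{\Lam}$, and it is only the convexity of $\abs{\cdot}^2$ that keeps it sharp and thereby lets $\mc{B}$ inherit the clean bounds $(\wt{\lam},\wt{\Lam})$. Beyond this, the only bookkeeping is to track two distinct pairs of constants — $(\lam,\Lam)$ attached to $\a,\A$ versus $(\wt{\lam},\wt{\Lam})$ attached to $\bb,\mc{B}$ — which is what makes the first estimate carry the asymmetric factor $\sqrt{\Lam\wt{\Lam}/(\lam\wt{\lam})}$ and the second carry $\sqrt{\wt{\Lam}/\wt{\lam}}$; everything else is a verbatim transcription of the proof of \thmref{thm:hlimit}.
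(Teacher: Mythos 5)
Your proof is correct and is precisely the argument the paper intends: the paper omits the proof with the remark that it follows the same line as Theorem~\ref{thm:hlimit}, and your decomposition $\mc{B}-\rho\A-(1-\rho)\mc{C}=\rho(\mc{B}-\A)+(1-\rho)(\mc{B}-\mc{C})$ is the natural generalization of the trivial convex-combination step there, with your two intermediate bounds reducing to \eqref{eq:matrixrela1} and \eqref{eq:matrixrela2} when $\mc{C}=\A$. The constant bookkeeping also checks out exactly, including the verification $\bb\in\mc{M}(\wt{\lam},\wt{\Lam};D)$ via convexity of $\abs{\cdot}^2$ and the factorization $\sqrt{\Lam\wt{\Lam}/(\lam\wt{\lam})}+\sqrt{\wt{\Lam}/\wt{\lam}}=\sqrt{\wt{\Lam}/\wt{\lam}}\bigl(\sqrt{\Lam/\lam}+1\bigr)$ that produces the stated bound.
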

The proof is omitted because it follows essentially the same line that
leads to Theorem~\ref{thm:hlimit}.

As a direct consequence of the above corollary, if we take $\rho(x)$ as the characteristic function of a subdomain $D_0$ of $D$, i.e., $\rho=\chi_{D_0}$, then
\begin{equation}\label{eq:exact1}
\mc{B}(x)=\chi_{D_0}\mc{A}(x)+(1-\chi_{D_0})\mc{C}(x)\qquad\ade\;x\in D.
\end{equation}
In particular, if we take $\mc{C}(x)=\mc{A}(x)$, then
\begin{equation}\label{eq:exact2}
\mc{B}(x)=\mc{A}(x)\qquad\ade\;x\in D.
\end{equation}

When $\a$ is locally periodic, i.e., $\a=a\xxe$ with
$a(x,\cdot)$ is $Y$-periodic with $Y=(-1/2,1/2)^n$, we can
characterize the effective matrix $\mc{B}$ more explicitly since $\bb$
is also locally periodic with the same period. By classical
homogenization theory~\cite{BenssousonLionsPapanicalou:1978}, the
effective matrix $\mc{B}$ is given by
\begin{equation}\label{eq:effective}
\mc{B}_{ij}(x)=\negint_Y\Lr{b_{ij}+b_{ik}\diff{\chi_{\rho}^j}{y_k}}(x,y)\dy,
\end{equation}
where $\{\chi_{\rho}^j(x,y)\}_{j=1}^d$ is periodic in
$y$ with period $Y$ and it satisfies
\begin{equation}\label{eq:cell}
-\diff{}{y_i}\Lr{b_{ik}\diff{\chi_{\rho}^j}{y_k}}(x,y)
= \diff{b_{ij}}{y_i}(x,y)\quad\text{in\quad}Y,\qquad
\int_Y\chi_\rho^j(x,y)\dy=0.
\end{equation}
For $x\in D$ with $\rho(x)=0$ or $\rho(x)=1$, we have $\mc{B}=\mc{A}$.

In particular, for $n=1$, we have the following explicit formula for $\mc{B}$.
\[
\mc{B}(x)=\Lr{\int_0^1\dfrac{1}{\rho(x) a(x,y)+(1-\rho(x))\mc{A}(x)}\dy}^{-1},
\]
where
\[
\mc{A}(x)=\Lr{\int_0^1\dfrac1{a(x,y)}\dy}^{-1}.
\]
As expected,  when $\rho(x)=0$ or $\rho(x)=1$, it is clear from the above that $\mc{B}(x)=\mc{A}(x)$.
\section{Convergence Rate for the Discrete Problem}\label{sec:errdis}
We now study the convergence rate of the discrete
Problem~\eqref{eq:varaarlequinapp}. We assume that
$\mc{A}_h\in\mc{M}(\lam',\Lam';D)$. This is true for any reasonable
approximation of $\mc{A}$. For example, if we use HMM method
\cite{EMingZhang:2005, HMMreview1,HMMreview2} to compute the effective
matrix, then $\mc{A}_h\in\mc{M}(\lam,\Lam;D)$. By this assumption, we have
\(\bb_h\in\mc{M}(\lam,\Lam;D)\).

To step further, let $\mc{T}_h$ be a triangulation of $D$ with maximum mesh size $h$. Denote by $h_{\tau}$ the diameter of each element $\tau\in\mc{T}_h$. we assume that all
elements in $\mc{T}_h$ are shape-regular in the sense of Ciarlet and Raviart~\cite{Ciarlet:1978}, that is each $\tau\in\mc{T}_h$ contains a ball of radius $c_1h_{\tau}$
and is contained in a ball of radius $C_1 h_{\tau}$ with fixed constants $c_1$ and $C_1$. 


Denote $K=\text{supp}\;\rho$ and $\abs{K}{:}=\text{mes}K$, and define
\[
\eta(K)=
\begin{cases}
  \abs{\ln\abs{K}}^{1/2}, &\text{if } n=2\;\text{and}\;s=1,\\
1, & \text{if } n = 3\; \text{or}\; s \in (0, 1).
\end{cases}
\]

We begin with the following inequality that will be frequently used later on.
\begin{lemma}
For any $v\in H^s(D)$ with $s\in(0,1]$, and for any subset $\Om\subset D$, we have
\begin{equation}\label{eq:basicest}
\nm{v}{L^2(\Om)}\le C\abs{\Om}^{s/n}\eta(\Om)\nm{v}{H^s(D)},
\end{equation}
where the constant $C$ independent of the measure of $D$.
\end{lemma}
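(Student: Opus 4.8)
The plan is to obtain \eqref{eq:basicest} by pairing a Sobolev embedding with H\"older's inequality, splitting into the sub-critical range $s<n/2$ (where $\eta\equiv1$) and the borderline range $s=n/2$, which for the relevant dimensions $n\in\{2,3\}$ occurs only when $n=2$ and $s=1$ and is the sole source of the logarithmic factor. The mechanism is the same in both regimes: H\"older localizes an $L^p(D)$ bound to $\Om$ at the cost of a power of $\abs{\Om}$, and this power equals exactly $s/n$ precisely when $p$ is the critical Sobolev exponent.

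First I would treat $s<n/2$; this covers $n=3$ with $s\in(0,1]$ and $n=2$ with $s\in(0,1)$. Set $p=2n/(n-2s)$, so that $\tfrac{1}{2}-\tfrac{1}{p}=\tfrac{s}{n}$. Applying H\"older's inequality on $\Om$ with exponents $p/2$ and its conjugate, followed by the embedding $H^s(D)\hookrightarrow L^p(D)$, gives
\[
\nm{v}{L^2(\Om)}\le\abs{\Om}^{1/2-1/p}\nm{v}{L^p(\Om)}\le\abs{\Om}^{s/n}\nm{v}{L^p(D)}\le C\abs{\Om}^{s/n}\nm{v}{H^s(D)},
\]
which is \eqref{eq:basicest} with $\eta\equiv1$. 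To keep $C$ independent of $\abs{D}$ I would use the scale-invariant form of the inequality: after extending $v$ to $\RR^n$, the homogeneous estimate $\nm{v}{L^p(\RR^n)}\le C_{n,s}\nm{v}{H^s(\RR^n)}$ has a constant depending only on $n$ and $s$, and a rescaling argument shows that in the sub-critical regime the lower-order contribution to the embedding constant on $D$ does not grow with $\abs{D}$.

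Next, the borderline case $n=2$, $s=1$. Now $H^1$ no longer embeds into $L^\infty$, but the two-dimensional Gagliardo--Nirenberg inequality yields, for every $p\in[2,\infty)$, $\nm{v}{L^p(D)}\le C\sqrt p\,\nm{v}{L^2(D)}^{2/p}\nm{v}{H^1(D)}^{1-2/p}\le C\sqrt p\,\nm{v}{H^1(D)}$ with $C$ absolute. Combining with the same H\"older step and writing $L=\abs{\ln\abs{\Om}}$ (so that $\abs{\Om}^{-1/p}=e^{L/p}$ for $\abs{\Om}\le1$),
\[
\nm{v}{L^2(\Om)}\le C\sqrt p\,\abs{\Om}^{1/2}e^{L/p}\nm{v}{H^1(D)}.
\]
Minimizing $\sqrt p\,e^{L/p}$ over $p$ gives the optimal exponent $p=2L$, for which $\sqrt p\,e^{L/p}=\sqrt{2e}\,\sqrt L$, and hence
\[
\nm{v}{L^2(\Om)}\le C\abs{\Om}^{1/2}\abs{\ln\abs{\Om}}^{1/2}\nm{v}{H^1(D)},
\]
exactly the claimed bound with $\eta(\Om)=\abs{\ln\abs{\Om}}^{1/2}$. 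The choice $p=2L$ requires $L\ge1$; since the estimate is only of interest for $\Om$ of small measure this restriction is harmless, and for $\abs{\Om}$ bounded away from its maximal value the bound follows directly from $\nm{v}{L^2(\Om)}\le\nm{v}{H^1(D)}$ after adjusting $C$.

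The main obstacle is this borderline case: it is not enough to know that $H^1(D)\hookrightarrow L^p(D)$ for all finite $p$; one needs the sharp $\sqrt p$ growth of the embedding constant, and then the precise optimization in $p$ that converts it into the factor $\abs{\ln\abs{\Om}}^{1/2}$. A secondary technical point, present in both regimes, is ensuring every constant is independent of $\abs{D}$; this is handled throughout by working with the homogeneous, scale-invariant forms of the Sobolev and Gagliardo--Nirenberg inequalities rather than with the full-norm embedding on the fixed domain $D$.
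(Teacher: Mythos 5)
Your proof is correct and takes essentially the same route as the paper: H\"older's inequality combined with the critical (fractional) Sobolev embedding $H^s(D)\hookrightarrow L^{2n/(n-2s)}(D)$ in the subcritical range, and H\"older combined with the $\sqrt{p}$-growth of the $H^1(D)\hookrightarrow L^p(D)$ embedding constant with the choice $p\simeq\abs{\ln\abs{\Om}}$ in the borderline case $n=2$, $s=1$ (the paper takes $p=\abs{\ln\abs{\Om}}$ where you optimize to $p=2\abs{\ln\abs{\Om}}$, an immaterial difference in the constant). Your extra remarks on scale invariance of the constants and on the restriction $p\ge 2$ for $\abs{\Om}$ not small merely make explicit points the paper leaves implicit.
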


\begin{proof}
For $n=3$ with $0<s\le 1$ and $n=2$ with $0<s<1$, let $2^\ast=2n/(n-2s)$ be the fractional critical exponent. By the H\"older's
inequality and the Sobolev embedding inequality~\cite{Nezza:2012}, we obtain
\[
\nm{v}{L^2(\Om)}\le\abs{\Om}^{1/2-1/2^\ast}\nm{v}{L^{2^\ast}(\Om)}\le\abs{\Om}^{s/n}\nm{v}{L^{2^\ast}(D)}\le
C\abs{\Om}^{s/n}\nm{v}{H^s(D)},
\]
which yields~\eqref{eq:basicest}.

As to $n=2$ and $s=1$, for any $p>2$, we have the Sobolev embedding inequality
\[
\nm{v}{L^p(D)}\le C\sqrt{p}\nm{v}{H^1(D)}\qquad\text{for all\quad}v\in H^1(D),
\]
which together with the H\"older's inequality gives
\[
\nm{v}{L^2(\Om)}\le\abs{\Om}^{1/2-1/p}\nm{v}{L^p(\Om)}\le\abs{\Om}^{1/2-1/p}\nm{v}{L^p(D)}\le C\sqrt{p}\abs{\Om}^{1/2-1/p}\nm{v}{H^1(D)}.
\]
Taking $p=\abs{\ln\abs{\Om}}$ in the above inequality, we obtain~\eqref{eq:basicest} for  $n=2$ and $s = 1$.
\end{proof}
\begin{remark}\label{remark:1dineq}
When $n=s=1$, \eqref{eq:basicest} is still true with prefactor replaced by $\abs{\Om}^{1/2}$ by observing
\[
\nm{v}{L^2(\Om)}\le\abs{\Om}^{1/2}\nm{v}{L^\infty(\Om)}\le\abs{\Om}^{1/2}\nm{v}{L^\infty(D)}\le C\abs{\Om}^{1/2}\nm{v}{H^1(D)},
\]
where we have used the imbedding $H^1(D)\hookrightarrow L^\infty(D)$ in the last step.
\end{remark}
\subsection{Accuracy for retrieving the macroscopic information}
In this part, we estimate the approximation error between the hybrid solution and the homogenized solution.
The following result is in the same spirit of the {\sc first Strang lemma}~\cite{Ciarlet:1978}. Our proof relies on the {\em Meyers' regularity} result~\cite {Meyers:1963}
for Problem~\eqref{homoell} in an essential way. We state Meyers' results as follows. There exists $p_0>2$ that depends on $D,\Lam$ and $\lam$, such that for all $p\le p_0$,
\begin{equation}\label{eq:meyers}
\nm{\na\cu}{L^p(D)}\le C\nm{f}{W^{-1,p}(D)}
\end{equation}
with $C$ depends on $D,\Lam$ and $\lam$.

For any $\Om\subset D$, by H\"older inequality and the above Meyers' estimate, we obtain
\begin{align}\label{eq:applmeyers}
\nm{\na\cu}{L^2(\Om)}&\le\abs{\Om}^{1/2-1/p}\nm{\na\cu}{L^p(\Om)}\le\abs{\Om}^{1/2-1/p}\nm{\na\cu}{L^p(D)}\\\nn
&\le C\abs{\Om}^{1/2-1/p}\nm{f}{W^{-1,p}(D)},
\end{align}
where $C$ depends on $D$ but independent of $\Om$.
\begin{theorem}\label{thm:disenerhomo}
Let $\cu$ and $v_h$ be the solutions of Problem~\eqref{homoell} and Problem~\eqref{eq:varaarlequinapp},
respectively. Define $e(\hmm){:}=\max_{x\in D\setminus K}\|(\A-\A_h)(x)\|$. There exists $C$ depends on $\lam,\Lam$ and $D$ such that
for any $2<p<p_0$,
\begin{equation}\label{eq:diserrhomo1}
\begin{aligned}
\nm{\na(\cu-v_h)}{L^2(D)}&\le C\inf_{\chi\in X_h}\nm{\na(\cu-\chi)}{L^2(D)}\\
&\qquad+C\Lr{\abs{K}^{1/2-1/p}+e(\hmm)}\nm{f}{W^{-1,p}(D)},
\end{aligned}
\end{equation}
and for any $2<p<\wt{p}_0$ with
\[
\wt{p}_0=\left\{\begin{aligned}
p_0,\quad&\text{if \;}n=2,\\
p_0\wedge 6,\quad&\text{if\;}n=3.
\end{aligned}\right.
\]
we have
\begin{multline}\label{eq:diserrhomo2}
\nm{\cu-v_h}{L^2(D)} \le C\Lr{\inf_{\chi\in X_h}\nm{\na(\cu-\chi)}{L^2(D)}+\Bigl(\abs{K}^{1/2-1/p}+e(\hmm)\Bigr)\nm{f}{W^{-1,p}(D)}}\\
\times\Lr{\sup_{g\in L^2(D)}\dfrac{1}{\nm{g}{L^2(D)}}
\inf_{\chi\in X_h}\nm{\na(\varphi_g-\chi)}{L^2(D)}+\abs{K}^{1/2-1/p}},
\end{multline}
where $\varphi_g\in H_0^1(D)$ is the unique solution of the variational problem:
\begin{equation}\label{eq:auxvara}
\inner{\A\na v}{\na\varphi_g}=\inner{g}{v}\qquad\text{for all\quad}v\in H_0^1(D).
\end{equation}
\end{theorem}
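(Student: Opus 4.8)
The plan is to establish both estimates by a Strang--type consistency analysis for the energy bound \eqref{eq:diserrhomo1}, followed by an Aubin--Nitsche duality argument for the $L^2$ bound \eqref{eq:diserrhomo2}. The organizing observation is that the discrete form differs from the homogenized one only through the coefficient defect $\bb_h-\A=\rho(\a-\A)+(1-\rho)(\A_h-\A)$, whose first piece is supported in $K=\operatorname{supp}\rho$ (of small measure) and whose remainder is bounded pointwise by $e(\hmm)$ on $D\setminus K$. Every error contribution will be traced back to a pairing against this defect, localized over $K$ and over $D\setminus K$, and then estimated by $\abs{K}^{1/2-1/p}$ on $K$ (converting small measure into a power of $\abs{K}$ via Meyers' estimate \eqref{eq:applmeyers}) and by $e(\hmm)$ on $D\setminus K$.

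For \eqref{eq:diserrhomo1} I would fix $\chi\in X_h$ and use the $\lam$-coercivity of $\bb_h\in\mc{M}(\lam,\Lam;D)$ to write $\lam\nm{\na(v_h-\chi)}{L^2(D)}^2\le\inner{\bb_h\na(v_h-\chi)}{\na(v_h-\chi)}$. Since $v_h-\chi\in X_h$, both \eqref{eq:varaarlequinapp} and \eqref{homoell} apply to this test function, and replacing $\inner{f}{v_h-\chi}$ by $\inner{\A\na\cu}{\na(v_h-\chi)}$ turns the right-hand side into $\inner{\A\na(\cu-\chi)}{\na(v_h-\chi)}+\inner{(\A-\bb_h)\na\chi}{\na(v_h-\chi)}$. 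The first term yields the best-approximation quantity $\inf_\chi\nm{\na(\cu-\chi)}{L^2(D)}$. In the second I would substitute $\na\chi=\na\cu-\na(\cu-\chi)$: the defect paired with $\na\cu$ splits over $K$ and $D\setminus K$, bounded there by $C\nm{\na\cu}{L^2(K)}$ and $e(\hmm)\nm{\na\cu}{L^2(D)}$, whence \eqref{eq:applmeyers} produces the factor $\Lr{\abs{K}^{1/2-1/p}+e(\hmm)}\nm{f}{W^{-1,p}(D)}$; the defect paired with $\na(\cu-\chi)$ is harmless since $\norm{\A-\bb_h}_{L^\infty}\le C$ and is absorbed into the approximation term. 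Dividing by $\nm{\na(v_h-\chi)}{L^2(D)}$, taking the infimum over $\chi$, and using the triangle inequality gives \eqref{eq:diserrhomo1}.

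For \eqref{eq:diserrhomo2} I would use duality: writing $\nm{\cu-v_h}{L^2(D)}=\sup_g\inner{g}{\cu-v_h}/\nm{g}{L^2(D)}$ and testing \eqref{eq:auxvara} with $v=\cu-v_h$ gives $\inner{g}{\cu-v_h}=\inner{\A\na(\cu-v_h)}{\na\varphi_g}$. For any $\psi\in X_h$ I would split this, using $\inner{\A\na\cu}{\na\psi}=\inner{\bb_h\na v_h}{\na\psi}$, into $\inner{\A\na(\cu-v_h)}{\na(\varphi_g-\psi)}+\inner{(\bb_h-\A)\na v_h}{\na\psi}$. The first term is at most $\Lam\nm{\na(\cu-v_h)}{L^2(D)}\nm{\na(\varphi_g-\psi)}{L^2(D)}$, and inserting \eqref{eq:diserrhomo1} yields the first factor of the product times the dual best-approximation $\inf_\psi\nm{\na(\varphi_g-\psi)}{L^2(D)}$. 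The consistency term is again localized: on $D\setminus K$ it is controlled by $e(\hmm)$, while on $K$ one applies Meyers' estimate to the adjoint solution $\varphi_g$. This is precisely where the restriction $p<\wt{p}_0$ (with $\wt{p}_0=p_0\wedge 6$ when $n=3$) enters, being exactly the embedding $L^2(D)\hookrightarrow W^{-1,p}(D)$, equivalently $W_0^{1,p'}(D)\hookrightarrow L^2(D)$, which guarantees $\nm{\na\varphi_g}{L^p(D)}\le C\nm{g}{W^{-1,p}(D)}\le C\nm{g}{L^2(D)}$ and hence $\nm{\na\varphi_g}{L^2(K)}\le C\abs{K}^{1/2-1/p}\nm{g}{L^2(D)}$.

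The main obstacle, and the step requiring the most care, is assembling the consistency term $\inner{(\bb_h-\A)\na v_h}{\na\psi}$ into a genuine \emph{product} of the two factors in \eqref{eq:diserrhomo2} rather than a sum. The idea is to write $\na\psi=\na\varphi_g-\na(\varphi_g-\psi)$ and $\na v_h=\na\cu-\na(\cu-v_h)$ and to arrange matters so that every surviving contribution carries one small primal factor (the energy bound \eqref{eq:diserrhomo1}, or $\nm{\na\cu}{L^2(K)}$) together with one small dual factor (the dual best-approximation error, or $\nm{\na\varphi_g}{L^2(K)}\sim\abs{K}^{1/2-1/p}$). Keeping $v_h$ and $\varphi_g$ paired against approximation errors, rather than estimating them separately by their full-size norms $O(\nm{f}{W^{-1,p}(D)})$ and $O(\nm{g}{L^2(D)})$, is essential; the delicate contribution is the outside-$K$ piece, where both $\na\cu$ and $\na\varphi_g$ are of full size and one must exhibit a dual-side small factor so as not to leave a term with two full-size factors. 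Tracking these pairings carefully, together with the uniform $H^1$ stability $\nm{\na v_h}{L^2(D)}\le C\nm{f}{W^{-1,p}(D)}$ and the Meyers bounds on both $\cu$ and $\varphi_g$, then delivers \eqref{eq:diserrhomo2}.
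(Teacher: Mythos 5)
Your proof of the energy estimate \eqref{eq:diserrhomo1} is correct and is in substance the paper's argument: the paper routes the same computation through the auxiliary discrete solution $\wt{u}$ of the homogenized problem and C\'ea's lemma, whereas you apply coercivity directly to $v_h-\chi$, but the core steps coincide — test against the coefficient defect $\bb_h-\A=\rho(\a-\A)+(1-\rho)(\A_h-\A)$, use Meyers' estimate \eqref{eq:applmeyers} to convert $\nm{\na\cu}{L^2(K)}$ into $C\abs{K}^{1/2-1/p}\nm{f}{W^{-1,p}(D)}$, bound the defect by $e(\hmm)$ off $K$, and invoke the a priori bound. Your duality setup for \eqref{eq:diserrhomo2} also matches the paper's, including the correct identification of where $p<\wt{p}_0$ enters: the embedding $L^2(D)\hookrightarrow W^{-1,p}(D)$ combined with Meyers' estimate for the adjoint solution gives $\nm{\na\varphi_g}{L^2(K)}\le C\abs{K}^{1/2-1/p}\nm{g}{L^2(D)}$.

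The gap is precisely at the step you flag as the main obstacle and then leave as an assertion: producing a ``dual-side small factor'' for the outside-$K$ consistency piece. No such factor exists, so the plan cannot be executed as described. Writing $\na\chi=\na\varphi_g-\na(\varphi_g-\chi)$ and $\na v_h=\na\cu-\na(\cu-v_h)$ in $\inner{(1-\rho)(\A_h-\A)\na v_h}{\na\chi}$ leaves, after all the genuinely small pairings are extracted, the fixed term $\inner{(1-\rho)(\A_h-\A)\na\cu}{\na\varphi_g}$, in which both gradients are of full size on $D\setminus K$ (of order $\nm{f}{W^{-1,p}(D)}$ and $\nm{g}{L^2(D)}$ respectively). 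This is a coefficient perturbation of size $e(\hmm)$ on the large set $D\setminus K$, and duality offers no gain against it: already at the continuous level, perturbing $\A$ by $e$ on most of $D$ changes the solution by order $e$ in $L^2$, with no extra small factor. The paper's own proof does not attempt your pairing for this term; it bounds it crudely by $e(\HMM)\nm{\na v_h}{L^2(D)}\nm{\na\chi}{L^2(D)}$, reserving the small-times-small mechanism for the $K$-localized term via $\nm{\na v_h}{L^2(K)}\le\nm{\na(\cu-v_h)}{L^2(D)}+C\abs{K}^{1/2-1/p}\nm{f}{W^{-1,p}(D)}$ and $\nm{\na\chi}{L^2(K)}\le\nm{\na(\varphi_g-\chi)}{L^2(D)}+C\abs{K}^{1/2-1/p}\nm{g}{L^2(D)}$. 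Carried out honestly, this assembly (yours or the paper's) yields the product bound \eqref{eq:diserrhomo2} only up to an additional additive contribution of order $e(\hmm)\nm{f}{W^{-1,p}(D)}$ coming from the outside-$K$ perturbation; your write-up should either include that additive term or restrict to the case $e(\hmm)=0$ ($\A_h=\A$ off $K$), rather than claim the pure product form can be reached by ``tracking the pairings carefully.''
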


The above estimates show that the solution of the hybrid problem is a
good approximation of the solution of the homogenized problem provided
that $\abs{K}$ is small, besides certain approximation error.  This is
expected since in this case we essentially solve the homogenized
problem over the main part of the domain $D$. The dependence on
$\abs{K}$ in the estimate~\eqref{eq:diserrhomo1} is also essential and
sharp, as will be shown by an explicit one-dimensional example in the
Appendix~\ref{subsec:example}. Similar constructions can be also done
for higher dimensions, though it becomes much more tedious.

Let us also remark that the error estimates~\eqref{eq:diserrhomo1}
and~\eqref{eq:diserrhomo2} are valid {\em without} any smoothness
assumption on $\cu$. Convergence rate may be obtained if we assume
extra smoothness on $\cu$, which may be found in
Corollary~\ref{coro:expliciterr}.  \vskip .5cm\noindent{\em Proof of
  Theorem~\ref{thm:disenerhomo}\;} Let $\wt{u}\in X_h$ be the solution
of the variational problem
\[
\inner{\A\na\wt{u}}{\na v}=\inner{f}{v}\qquad\text{for all\quad} v\in X_h.
\]
By Cea's lemma~\cite{Ciarlet:1978}, we obtain
\begin{equation}\label{eq:galerkinproj}
\nm{\na(\cu-\wt{u})}{L^2(D)}\le\dfrac{\Lam}{\lam}\inf_{\chi\in X_h}\nm{\na(\cu-\chi)}{L^2(D)}.
\end{equation}
Denote $w=\wt{u}-v_h$ and using the definition of $v_h$ and $\wt{u}$, we obtain
\[
\inner{\bb_h\na w}{\na w}=\inner{\bb_h\na\wt{u}}{\na w}-\inner{f}{w}=\inner{(\bb_h-\A)\na\wt{u}}{\na w}.
\]
By $\bb_h-\A=\rho(\a-\A)+(1-\rho)(\A_h-\A)$, we obtain
\[
\inner{\bb_h\na w}{\na w}\le\Lr{2\Lam\nm{\na\wt{u}}{L^2(K)}+e(\HMM)\nm{\na\wt{u}}{L^2(D)}}\nm{\na w}{L^2(D)}.
\]
Using the triangle inequality and~\eqref{eq:applmeyers} with $\Om=K$, we obtain, for any $2<p<p_0$,
\begin{equation}\label{eq:basicest2}
\nm{\na\wt{u}}{L^2(K)}\le\nm{\na(\cu-\wt{u})}{L^2(D)}+C\abs{K}^{1/2-1/p}\nm{f}{W^{-1,p}(D)},
\end{equation}
and the a-priori estimate
\(
\nm{\na\wt{u}}{L^2(D)}\le C\nm{f}{H^{-1}(D)}\le C\nm{f}{W^{-1,p}(D)}.
\)
Combining the above three equations, we obtain
\begin{align*}
\inner{\bb_h\na w}{\na w}&\le C\nm{\na(\cu-\wt{u})}{L^2(D)}\nm{\na w}{L^2(D)}\\
&\quad+\Lr{\abs{K}^{1/2-1/p}+e(\HMM)}\nm{f}{W^{-1,p}(D)}\nm{\na w}{L^2(D)}.
\end{align*}
This together with~\eqref{eq:galerkinproj} concludes the estimate~\eqref{eq:diserrhomo1}.

We exploit {\em Aubin-Nitsche's dual argument}~\cite{Nitsche:1968} to prove the $L^2$ estimate. For any $\chi\in X_h$, using~\eqref{eq:auxvara}, we obtain
\begin{equation}
\inner{g}{\cu-v_h}=\inner{\A\na(\cu-v_h)}{\na(\varphi_g-\chi)}+\inner{\A\na(\cu-v_h)}{\na\chi}.\label{eq:disl2dec}
\end{equation}
The first term may be bounded by
\[
\abs{\inner{\A\na(\cu-v_h)}{\na(\varphi_g-\chi)}}\le\Lam\nm{\na(\cu-v_h)}{L^2(D)}
\nm{\na(\varphi_g-\chi)}{L^2(D)}.
\]

The second term in the right-hand side of~\eqref{eq:disl2dec} may be rewritten into
\[
\inner{\A\na(\cu-v_h)}{\na\chi}=\inner{f}{\chi}-\inner{\A\na v_h}{\na\chi}=\inner{(\bb_h-\A)\na v_h}{\na\chi}.
\]
By
\(
\bb_h-\A=\rho(\a-\A)+(1-\rho)(\A_h-\A),
\)
we obtain
\begin{align*}
\abs{\inner{\A\na(\cu-v_h)}{\na\chi}}&\le 2\Lam\nm{\na v_h}{L^2(K)}\nm{\na\chi}{L^2(K)}\\
&\quad+e(\HMM)\nm{\na v_h}{L^2(D)}\nm{\na\chi}{L^2(D)}\Bigr).
\end{align*}
Proceeding along the same line that leads to~\eqref{eq:basicest2}, we obtain
\[
\nm{\na v_h}{L^2(K)}\le \nm{\na(\cu-v_h)}{L^2(D)}+C\abs{K}^{1/2-1/p}\nm{f}{W^{-1,p}(D)}.
\]
Proceeding along the same line and using the imbedding $L^2(D)\hookrightarrow W^{-1,p}(D)$ for any $2<p<\wt{p}_0$,
we obtain
\[
\nm{\na\varphi_g}{L^2(K)}\le C\abs{K}^{1/2-1/p}\nm{g}{L^2(D)},
\]
hence
\[
\nm{\na\chi}{L^2(K)}\le\nm{\na(\varphi_g-\chi)}{L^2(D)}+C\abs{K}^{1/2-1/p}\nm{g}{L^2(D)}.
\]
Summing up all the above estimates, using the triangle inequality and~\eqref{eq:diserrhomo1}, we obtain~\eqref{eq:diserrhomo2}.
\qed

In Theorem~\ref{thm:disenerhomo}, the factor $\abs{K}^{1/2-1/p}$ may seem quite pessimistic. If there are some extra conditions on the solution $\cu$ or the source term $f$, the error bound  may be significantly improved.
\begin{coro}\label{coro:expliciterr}
\begin{enumerate}
\item If $\nm{\na\cu}{L^\infty(K)}$ is bounded, then the index $p$ may be infinity.

\item If there holds the regularity estimate
\begin{equation}\label{eq:ellreg}
\nm{\cu}{H^{1+s}(D)}\le C\nm{f}{L^2(D)}\quad\text{for\quad}0<s\le 1,
\end{equation}
there exists $C$ that depends on $\lam,\Lam$ and $f$ such that
\begin{equation}\label{eq:expliciterr1}
\begin{aligned}
\nm{\na(\cu-v_h)}{L^2(D)}&\le C\Lr{h^s+\abs{K}^{s/n}\eta(K)+e(\hmm)},\\
\nm{\cu-v_h}{L^2(D)}&\le C\Lr{h^s+\abs{K}^{s/n}\eta(K)+e(\hmm)}\Lr{h^s+\abs{K}^{s/n}\eta(K)}.
\end{aligned}
\end{equation}

\item If $f$ is supported in $K$ and $f\in L^2(D)$, then we may replace $\abs{K}^{s/n}\eta(K)$  in~\eqref{eq:expliciterr1} by $\abs{K}^{1/n}$.
\end{enumerate}
\end{coro}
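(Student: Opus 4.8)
The three parts are all refinements of \thmref{thm:disenerhomo}, and the plan is to re-enter its proof at the single place where the factor $\abs{K}^{1/2-1/p}$ is produced, namely the local gradient bound \eqref{eq:basicest2} obtained from Meyers' estimate \eqref{eq:applmeyers}, and replace it by a sharper estimate of $\nm{\na\cu}{L^2(K)}$ (together with the analogous quantities $\nm{\na v_h}{L^2(K)}$ and $\nm{\na\varphi_g}{L^2(K)}$ appearing in the dual argument). Since every $\abs{K}$-dependent term in \eqref{eq:diserrhomo1}--\eqref{eq:diserrhomo2} traces back to this one bound, sharpening it immediately propagates to both the energy and the $L^2$ estimates.

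For (1), if $\nm{\na\cu}{L^\infty(K)}$ is finite I would simply bound $\nm{\na\cu}{L^2(K)}\le\abs{K}^{1/2}\nm{\na\cu}{L^\infty(K)}$, which is exactly the $p=\infty$ endpoint of the quantity $\abs{K}^{1/2-1/p}\nm{f}{W^{-1,p}(D)}$; carrying this through the proof lets one take $p=\infty$ in the factor $\abs{K}^{1/2-1/p}$ of \eqref{eq:diserrhomo1}, at the price of replacing $\nm{f}{W^{-1,p}(D)}$ by $\nm{\na\cu}{L^\infty(K)}$ in that term. For (2), the regularity hypothesis \eqref{eq:ellreg} gives $\na\cu\in[H^s(D)]^n$ with $\nm{\na\cu}{H^s(D)}\le C\nm{\cu}{H^{1+s}(D)}\le C\nm{f}{L^2(D)}$, so applying the embedding inequality \eqref{eq:basicest} componentwise with $\Om=K$ yields $\nm{\na\cu}{L^2(K)}\le C\abs{K}^{s/n}\eta(K)\nm{f}{L^2(D)}$, while the approximation term is controlled by the standard Lagrange interpolation estimate $\inf_{\chi\in X_h}\nm{\na(\cu-\chi)}{L^2(D)}\le Ch^s\nm{\cu}{H^{1+s}(D)}$. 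Because the adjoint problem \eqref{eq:auxvara} has the same ellipticity constants, $\varphi_g$ enjoys the same regularity, so $\inf_{\chi}\nm{\na(\varphi_g-\chi)}{L^2(D)}\le Ch^s\nm{g}{L^2(D)}$ and $\nm{\na\varphi_g}{L^2(K)}\le C\abs{K}^{s/n}\eta(K)\nm{g}{L^2(D)}$; substituting these into \eqref{eq:diserrhomo1}--\eqref{eq:diserrhomo2} produces \eqref{eq:expliciterr1}.

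Part (3) requires a genuinely new ingredient and is where I expect the main difficulty to lie. With $f$ supported in $K$ I would estimate $\nm{f}{W^{-1,p}(D)}$ by duality: for $w\in W^{1,p'}_0(D)$ with $1/p+1/p'=1$,
\[
\inner{f}{w}=\int_K f\,w\le\nm{f}{L^2(D)}\nm{w}{L^2(K)}\le C\abs{K}^{1/2-1/q}\nm{f}{L^2(D)}\nm{w}{W^{1,p'}(D)},
\]
where I used H\"older together with the Sobolev embedding $W^{1,p'}(D)\hookrightarrow L^q(D)$ with $q=np'/(n-p')$. Combining with Meyers' estimate, $\nm{\na\cu}{L^2(K)}\le\abs{K}^{1/2-1/p}\nm{\na\cu}{L^p(D)}\le C\abs{K}^{1/2-1/p}\nm{f}{W^{-1,p}(D)}$, gives total power $(1/2-1/p)+(1/2-1/q)=1-1/p-1/q$; since $1/q=1/p'-1/n=1-1/p-1/n$, the $p$-dependence cancels and the exponent is exactly $1/n$, so $\nm{\na\cu}{L^2(K)}\le C\abs{K}^{1/n}\nm{f}{L^2(D)}$ with no logarithmic loss. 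Inserting this sharp bound wherever the primal local gradient was estimated, and treating the corresponding adjoint local term in the same fashion, replaces $\abs{K}^{s/n}\eta(K)$ by $\abs{K}^{1/n}$ in \eqref{eq:expliciterr1}.

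The crux, and the step I would check most carefully, is precisely this exponent cancellation in (3): one must fix $q$ at the Sobolev conjugate $np'/(n-p')$ so that the two powers of $\abs{K}$ sum to $1/n$ independently of $p$, and verify that $W^{1,p'}\hookrightarrow L^q$ is a continuous embedding with no logarithmic loss, which holds since $p'<2\le n$ for $n=2,3$ and forces $q>2$ (consistent with the dimension-dependent restriction on $\wt p_0$). The remaining bookkeeping---the regularity of $\varphi_g$ needed for the $L^2$ bound and the reduction of the $\abs{K}$-independent approximation terms to powers of $h$---is routine once these local gradient estimates are in place.
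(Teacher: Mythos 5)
Your proposal matches the paper's proof essentially step for step: part (1) is dispatched as the trivial $L^\infty$ endpoint, part (2) by replacing Meyers' estimate with \eqref{eq:basicest} plus the standard interpolation estimate, and part (3) by the very same duality argument $\nm{f}{W^{-1,p}(D)}\le C\abs{K}^{1/2-1/r}\nm{f}{L^2(D)}$ (H\"older on $K$ combined with the Sobolev embedding $W^{1,q}(D)\hookrightarrow L^{r}(D)$, $1/r=1/q-1/n$, $q$ the conjugate of $p$), with exactly the exponent cancellation $(1/2-1/p)+(1/2-1/r)=1/n$ that you flag as the crux. The one point where you are briefly imprecise---asserting the adjoint term in the $L^2$ bound is treated ``in the same fashion,'' even though the dual datum $g$ is not supported in $K$---is glossed over identically in the paper, which only substitutes the improved bound into \eqref{eq:diserrhomo1}, so there is no substantive discrepancy.
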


\begin{proof}
The first assertion is straightforward.

To prove the second assertion, we just need to replace {\em Meyers' estimate} by the estimate~\eqref{eq:basicest} and apply the standard interpolation estimate.

As to the third assertion, denote by $q$ the conjugate index of $p$, we have
\begin{align*}
\nm{f}{W^{-1,p}(D)}&\le\sup_{g\in W_0^{1,q}(D)}\dfrac{\nm{f}{L^2(K)}\nm{g}{L^2(K)}}{\nm{g}{W^{1,q}(D)}}\\
&\le \abs{K}^{1/2-1/r}\sup_{g\in W_0^{1,q}(D)}\dfrac{\nm{f}{L^2(K)}\nm{g}{L^r(K)}}{\nm{g}{W^{1,q}(D)}}\\
&\le C\abs{K}^{1/2-1/r}\nm{f}{L^2(D)},
\end{align*}
where we have used the Sobolev imbedding
$W^{1,q}(D)\hookrightarrow L^{r}(D)$ with
$1/r=1/q-1/n$. Substituting the above estimate
into~\eqref{eq:diserrhomo1}, we obtain the improved factor
$\abs{K}^{1/n}$ in the estimate~\eqref{eq:expliciterr1}
\end{proof}%
\subsection{Accuracy for retrieving the local microscopic information}
Parallel to the above results, we have the following energy error estimate for $\uu-v_h$.
Our proof also relies on the {\em Meyers' regularity} result~\cite {Meyers:1963}
for Problem~\eqref{eq:ell} in an essential way. We state Meyers' results as follows. There exists $p_1>2$ that depends on $D,\Lam$ and $\lam$, such that for all $p\le p_1$,
\begin{equation}\label{eq:meyers1}
\nm{\na\uu}{L^p(D)}\le C \nm{f}{W^{-1,p}(D)}
\end{equation}
with $C$ depends on $D,\Lam$ and $\lam$.
%
\begin{lemma}
Let $\uu$ and $v_h$ be the solutions of Problem~\eqref{eq:ell} and Problem~\eqref{eq:varaarlequinapp},
respectively. There holds, for any $2<p<p_1$,
\[
\nm{\na(\uu-v_h)}{L^2(D)}\le C\Lr{\inf_{\chi\in X_h}\nm{\na(\uu-\chi)}{L^2(D)}+\abs{D\setminus K}^{1/2-1/p}{\nm{f}{W^{-1,p}(D)}}},
\]
where $C$ depends on $\lam,\lam',\Lam,\Lam'$ and $D$.
\end{lemma}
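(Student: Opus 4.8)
The plan is to mirror the proof of Theorem~\ref{thm:disenerhomo}, but to compare $v_h$ against the Galerkin approximation of the \emph{microscopic} problem~\eqref{eq:ell} rather than the homogenized one. Concretely, I would let $\wh{u}\in X_h$ solve $\inner{\a\na\wh{u}}{\na v}=\inner{f}{v}$ for all $v\in X_h$, i.e.\ the conforming finite element approximation of~\eqref{eq:ell}. Since $\a\in\mc{M}(\lam,\Lam;D)$ is coercive and bounded (the non-symmetry being harmless here), Cea's lemma gives
\[
\nm{\na(\uu-\wh{u})}{L^2(D)}\le\dfrac{\Lam}{\lam}\inf_{\chi\in X_h}\nm{\na(\uu-\chi)}{L^2(D)},
\]
which already produces the best-approximation term appearing in the statement.

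Next I would set $w=\wh{u}-v_h\in X_h$ and exploit the two discrete equations exactly as in the theorem. Testing against $w$ and using the definitions of $v_h$ and $\wh{u}$,
\[
\inner{\bb_h\na w}{\na w}=\inner{\bb_h\na\wh{u}}{\na w}-\inner{f}{w}=\inner{(\bb_h-\a)\na\wh{u}}{\na w}.
\]
The decisive algebraic computation is $\bb_h-\a=(1-\rho)(\A_h-\a)$, so the discrepancy is supported where $\rho\neq1$ and, in particular, vanishes throughout the coupling core $\{\rho=1\}$ where the microscopic coefficient is used exactly; the residual therefore lives on the region governed by $D\setminus K$. Bounding $\|\A_h-\a\|\le\Lam+\Lam'$ (both matrices have operator norm at most $\Lam$ and $\Lam'$) and applying Cauchy--Schwarz, then using the coercivity $\inner{\bb_h\na w}{\na w}\ge\lam\nm{\na w}{L^2(D)}^2$ to cancel one factor of $\na w$, reduces everything to estimating $\nm{\na\wh{u}}{L^2(D\setminus K)}$.

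To control this last quantity I would invoke Meyers' estimate~\eqref{eq:meyers1} for the oscillatory problem together with H\"older's inequality, in complete analogy with~\eqref{eq:applmeyers} and~\eqref{eq:basicest2}. By the triangle inequality $\nm{\na\wh{u}}{L^2(D\setminus K)}\le\nm{\na(\uu-\wh{u})}{L^2(D)}+\nm{\na\uu}{L^2(D\setminus K)}$, and for $2<p<p_1$,
\[
\nm{\na\uu}{L^2(D\setminus K)}\le\abs{D\setminus K}^{1/2-1/p}\nm{\na\uu}{L^p(D)}\le C\abs{D\setminus K}^{1/2-1/p}\nm{f}{W^{-1,p}(D)}.
\]
Combining this with the Cea bound above and a final triangle inequality $\nm{\na(\uu-v_h)}{L^2(D)}\le\nm{\na(\uu-\wh{u})}{L^2(D)}+\nm{\na w}{L^2(D)}$ yields the claim.

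The step that needs the most care --- and which is the genuine content beyond routine Galerkin bookkeeping --- is the use of Meyers' regularity: the exponent $p_1>2$ and the constant $C$ in~\eqref{eq:meyers1} must be \emph{independent} of $\eps$, since otherwise the higher integrability of $\na\uu$, and hence the favorable power $\abs{D\setminus K}^{1/2-1/p}$, would degenerate as the coefficient oscillates faster. I would therefore stress that Meyers' estimate holds uniformly over $\a\in\mc{M}(\lam,\Lam;D)$. A secondary bookkeeping point is that the discrepancy region $\{\rho\neq1\}$ may slightly exceed $D\setminus K$ through the transition zone $\{0<\rho<1\}\subset K$; since $\abs{K}\le\abs{D\setminus K}$ under the working assumption that $K$ is small and $t\mapsto t^{1/2-1/p}$ is increasing for $p>2$, that extra contribution is dominated by, and can be absorbed into the constant in front of, the stated $\abs{D\setminus K}^{1/2-1/p}$ term.
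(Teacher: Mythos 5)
Your proposal is correct and is precisely the argument the paper intends: the paper omits the proof, saying only that it ``follows the same line that leads to~\eqref{eq:diserrhomo1} except that the Meyers' estimate~\eqref{eq:meyers1} for $\uu$ is exploited,'' which is exactly your route --- Galerkin projection for the microscopic coefficient plus Cea's lemma (valid in the nonsymmetric coercive case), the identity $\bb_h-\a=(1-\rho)(\A_h-\a)$, coercivity of $\bb_h$ to absorb one factor of $\na w$, and Meyers--H\"older control of $\nm{\na\uu}{L^2}$ on the region where $\rho<1$. Your two closing remarks --- that the Meyers exponent $p_1$ and constant are uniform over $\mc{M}(\lam,\Lam;D)$ and hence $\eps$-independent, and that the transition zone $\{0<\rho<1\}\subset K$ is absorbed into $\abs{D\setminus K}^{1/2-1/p}$ since $\abs{K}\le\abs{D\setminus K}$ --- are exactly the right bookkeeping.
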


The proof is omitted because it follows the same line that leads to~\eqref{eq:diserrhomo1} except that the Meyers' estimate~\eqref{eq:meyers1} for $\uu$ is exploited. 

The above estimate indicates that the global microscopic information
can be retrieved provided that $\abs{K}$ is big, namely we
solve~\eqref{eq:ell} almost everywhere, which seems to contradict with
our motivation. In fact, our projective is less ambitious, since what
we need is the local microscopic information. Therefore, the most
relevant error notion is often related to the local norm instead of
the global energy error. The following local energy estimate is the
main result of this part.

We assume that $\rho\equiv 1$ on
$K_0\subset\subset K$, and
$\text{dist}(K_0, \pa K\setminus\pa D)=d\ge \kappa h$ for a
sufficiently large $\kappa>0$, moreover $\abs{\na\rho}\le C/d$ for
certain constant
$C$. 
For a subset $B\in D$, we define
\[
H_{<}^1(B){:}=\set{u\in H^1(D)}{u|_{D\setminus B}=0}.
\]
In order to prove the localized energy error estimate, we state some properties of $X_h$ confined to $K$ following those
of~\cite{Demlow:2011}. More detailed discussion on these properties may be found in~\cite{NitscheSchatz:1974}. Let $G_1$ and $G$ be subsets of $K$ with $G_1\subset G$ and $\text{dist}(G_1,\pa G\setminus \pa D)=\wt{d}>0$. The following assumptions are assumed to hold:
\begin{enumerate}
\item[A1:]{\em Local interpolant.} There exists a local interpolant such that for any $u\in H_{<}^1(G_1)\cap C(G_1)$,
$I u\in X_h\cap H_{<}^1(G)$.

\item[A2:]{\em Inverse properties.} For each $\chi\in X_h$ and $\tau\in\mc{T}_h\cap K, 1\le p\le q\le\infty$, and
$0\le\nu\le s\le r$,
\begin{equation}\label{eq:inverse}
\nm{\chi}{W^{s,q}(\tau)}\le Ch_{\tau}^{\nu-s+n/p-n/q}\nm{\chi}{W^{\nu,p}(\tau)}.
\end{equation}

\item[A3.]{\em Superapproximation.} Let $\omega\in C^\infty(K)\cap H_{<}^1(G_1)$ with
$\abs{\omega}_{W^{j,\infty}(K)}\le C\wt{d}^{-j}$ for integers $0\le j\le r$ for each $\chi\in X_h\cap H_{<}^1(G)$
and for each $\tau\in\mc{T}_h\cap K$ satisfying $h_{\tau}\le d$,
\begin{equation}\label{eq:suppapp}
\nm{\omega^2\chi-I(\omega^2\chi)}{H^1(\tau)}\le C\Lr{\dfrac{h_{\tau}}{\wt{d}}\nm{\na(\omega\chi)}{L^2(\tau)}
+\dfrac{h_{\tau}}{\wt{d}^2}\nm{\chi}{L^2(\tau)}},
\end{equation}
where the interpolant $I$ is defined in A$1$.
\end{enumerate}

The assumptions A$1$,A$2$ and A$3$ are satisfied by standard Lagrange finite element defined on shape-regular
grids. In particular, the {\em  Superapproximation} property~\eqref{eq:suppapp} was recently proved
in~\cite[Theorem 2.1]{Demlow:2011}, which is the key for the validity of the local energy estimate over
shape-regular grids.
\begin{theorem}\label{thm:dislener}
Let $K_0\subset K\subset D$ be given, and let $\text{dist}(K_0,\pa K\setminus\pa D)=d$. Let assumptions A$1$, A$2$ and A$3$ hold with $\wt{d}=d/16$, in addition, let $\max_{\tau\cap K\not=\emptyset}h_{\tau}/d\le 1/16$. Then
\begin{equation}\label{eq:dislener}
\nm{\na(\uu-v_h)}{L^2(K_0)}\le C\Lr{\inf_{\chi\in X_h}\nm{\na(\uu-\chi)}{L^2(K)}+d^{-1}\nm{\uu-v_h}{L^2(K)}},
\end{equation}
where $C$ depends only on $\Lam,\lam,\Lam',\lam'$ and $D$.
\end{theorem}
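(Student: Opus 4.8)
The plan is to prove \eqref{eq:dislener} by a localized energy estimate of Nitsche--Schatz type, in the form revisited by Demlow, Guzm\'an and Schatz, for which the Assumptions A1--A3 have been tailored. The starting point is a \emph{local Galerkin orthogonality}: wherever $\rho\equiv 1$ one has $\bb_h=\a$, so that for every $w\in X_h$ with $w|_{D\setminus K_0}=0$ the identity $\inner{\bb_h\na v_h}{\na w}=\inner{f}{w}=\inner{\a\na\uu}{\na w}$ holds, and since $\bb_h=\a$ on $\text{supp}\,w$ this yields $\inner{\a\na(\uu-v_h)}{\na w}=0$. Writing $e=\uu-v_h$ and splitting $e=(\uu-\chi)+\phi$ with $\chi\in X_h$ a quasi-best approximation of $\uu$ on $K$ and $\phi=\chi-v_h\in X_h$, it suffices by the triangle inequality to bound $\nm{\na\phi}{L^2(K_0)}$, the contribution of $\uu-\chi$ being absorbed into the first term on the right-hand side of \eqref{eq:dislener}.

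Next I would run the weighted estimate. Fix a cut-off $\omega\in C^\infty$ with $\omega\equiv 1$ on $K_0$, supported in a shell of width $\wt{d}=d/16$ about $K_0$, and $\abs{\omega}_{W^{j,\infty}}\le C\wt{d}^{-j}$, so that $\omega$ is admissible in A3. By coercivity of $\bb_h$ we have $\lam\nm{\omega\na\phi}{L^2}^2\le\inner{\bb_h\na\phi}{\omega^2\na\phi}$, and one rewrites $\omega^2\na\phi=\na(\omega^2\phi)-2\omega\phi\,\na\omega$. The commutator term $\inner{\bb_h\na\phi}{\omega\phi\na\omega}$ is handled by Young's inequality, producing an absorbable $\delta\nm{\omega\na\phi}{L^2}^2$ plus $C\wt{d}^{-2}\nm{\phi}{L^2(K)}^2$, which feeds the $d^{-1}\nm{e}{L^2(K)}$ term. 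For the principal term $\inner{\bb_h\na\phi}{\na(\omega^2\phi)}$ I would insert the interpolant $I(\omega^2\phi)\in X_h$ from A1, whose support must be kept inside the region where the microscopic orthogonality is available: the remainder $\omega^2\phi-I(\omega^2\phi)$ is controlled by the \emph{superapproximation} \eqref{eq:suppapp} together with the inverse estimate \eqref{eq:inverse}, which contribute factors $h_\tau/\wt{d}$ that are small under the refined-grid hypothesis $h_\tau/d\le 1/16$ and are therefore absorbed; while on $I(\omega^2\phi)$ the local Galerkin orthogonality cancels the $v_h$-consistency contribution and leaves only $\inner{\a\na(\uu-\chi)}{\na I(\omega^2\phi)}$, bounded by $C\nm{\na(\uu-\chi)}{L^2(K)}\Lr{\nm{\omega\na\phi}{L^2}+\wt{d}^{-1}\nm{\phi}{L^2(K)}}$.

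Collecting these bounds gives a \emph{one-step} inequality of the schematic form $\nm{\na\phi}{L^2(K_0)}\le \tfrac12\nm{\na\phi}{L^2(K_0^{+})}+C\Lr{\nm{\na(\uu-\chi)}{L^2(K)}+\wt{d}^{-1}\nm{e}{L^2(K)}}$, where $K_0^{+}$ is the slightly larger shell carrying $\omega$. I would then iterate this over a finite chain of $O(1)$ nested shells interpolating between $K_0$ and $K$, each of width comparable to $d/16$ (the constant $16$ guaranteeing enough mesh layers for the superapproximation gain to beat the $\wt{d}^{-1}$ growth); summing the resulting geometric series by a standard kickback argument yields \eqref{eq:dislener} with a constant depending only on $\lam,\Lam,\lam',\Lam'$ and $D$.

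The main obstacle is the interaction between the localization and the transition region $K\setminus K_0$, where $\bb_h\neq\a$ and the clean microscopic Galerkin orthogonality is lost. The entire argument must therefore confine the test functions $I(\omega^2\phi)$ to the region where that orthogonality holds, and this is precisely why the separation $d=\dist(K_0,\pa K\setminus\pa D)$, the choice $\wt{d}=d/16$, and the condition $\max_{\tau\cap K\neq\emptyset}h_\tau/d\le 1/16$ are imposed: they supply the room and the mesh resolution needed for A1 and A3 to apply without picking up an uncontrolled residual from the layer where $\rho<1$. Verifying that \eqref{eq:suppapp} indeed produces, at each step of the iteration, a gain that dominates the $\wt{d}^{-1}$ blow-up—so that the kickback closes with a constant independent of $\eps$ and $h$—is the technically delicate point on which the result rests.
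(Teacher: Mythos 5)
Your plan is sound and would prove the theorem, but it is organized genuinely differently from the paper's proof. You run the Nitsche--Schatz/Demlow weighted-cutoff machinery directly on $\phi=\chi-v_h$: local Galerkin orthogonality inside $\{\rho=1\}$, testing with the interpolated cutoff $I(\omega^2\phi)$, superapproximation \eqref{eq:suppapp} plus inverse estimates \eqref{eq:inverse} to absorb, and a kickback iteration over $O(1)$ nested shells. The paper never performs this iteration inside the proof of Theorem~\ref{thm:dislener}; instead it introduces $\wh{u}=\rho\uu$ and an auxiliary local Galerkin projection $\wh{u}_h\in X_h\cap H_{<}^1(K)$ defined with the correction functional $F(v)=\inner{(\bb_h-\a)\na\uu}{\na v}$, verifies that $\wh{u}_h-v_h$ is \emph{exactly} discrete harmonic on a fattened set $\wt{K}$ where $\rho\equiv 1$, and then invokes the discrete Caccioppoli estimate \eqref{eq:disharmonic} as a black box (this is the only place A1--A3 enter, through the cited adaptation of~\cite{Demlow:2011}); the conclusion follows by the triangle inequality, Poincar\'e, and a shift of $\chi$ by a mean value. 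The paper's factorization buys a clean confinement of the coefficient mismatch $\bb_h\neq\a$ into the single functional $F$, whose only trace is the harmless term $\nm{\na\uu}{L^2(K\cap K_1)}$ in the stability bound \eqref{eq:stability}; your direct route buys self-containedness and makes explicit where the margins $\wt{d}=d/16$ and $\max_{\tau\cap K\neq\emptyset}h_\tau/d\le 1/16$ are actually spent.

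Two points in your sketch need tightening. First, the support bookkeeping you yourself flag as delicate is a genuine constraint, not merely a technicality: your $\omega$ equals $1$ on $K_0$ but is supported on a shell \emph{around} $K_0$, so $I(\omega^2\phi)$ necessarily leaves $K_0$; if $\{\rho=1\}$ were exactly $K_0$, the residual picked up in that layer, $\inner{(\bb_h-\a)\na\uu}{\na I(\omega^2\phi)}$, is of size $\nm{\na\uu}{L^2}$ over the transition layer --- not an error quantity --- and cannot be absorbed into the right-hand side of \eqref{eq:dislener}. The argument therefore needs $\rho\equiv 1$ on an $O(d)$-fattening of $K_0$, which is exactly what the paper's reduction builds in ($K_0=B_{d/2}\cap D$, $\rho\equiv 1$ on $\wt{K}=B_{3d/4}\cap D$); make this fattening explicit, or start the iteration from a shrunken inner set. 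Second, your absorption leaves a term $\wt{d}^{-1}\nm{\phi}{L^2(K)}\le \wt{d}^{-1}\bigl(\nm{\uu-\chi}{L^2(K)}+\nm{\uu-v_h}{L^2(K)}\bigr)$, and $d^{-1}\nm{\uu-\chi}{L^2(K)}$ is not one of the two admissible terms in \eqref{eq:dislener}; you need the paper's closing device --- choose $\chi=\chi^\ast+\negint_K(\uu-\chi^\ast)\dx$ with $\chi^\ast$ the best gradient approximation and apply Poincar\'e's inequality on $K$ --- to convert it into $C\nm{\na(\uu-\chi^\ast)}{L^2(K)}$.
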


\begin{remark}
The estimate~\eqref{eq:dislener} is also valid even if the subdomain $K_0$ abuts the original
domain $D$, which makes practical implementation convenient.
\end{remark}

In the above local energy
estimate, the first contribution comes from local approximation, the local events may be resolved by the adaptive method that may require highly
refined mesh, which is allowed by the above theorem. All the other
contributions are encapsulated in the second term $\nm{\uu-v_h}{L^2(K)}$, which is a direct consequence of the $L^2$ estimate~\eqref{eq:diserrhomo2} and the triangle inequality as follows. To make the presentation simpler, we assume the regularity estimate~\eqref{eq:ellreg} is valid with $s=1$, then
\begin{equation}\label{eq:disl2}
\begin{aligned}
\nm{\uu-v_h}{L^2(K)}&\le\nm{\uu-v_h}{L^2(D)}\le\nm{\cu-v_h}{L^2(D)}+\nm{\uu-\cu}{L^2(D)}\\
&\le C\Lr{h^2+\abs{K}^{2/n}\eta^2(K)+\max_{x\in D\setminus K}\|(\A-\A_h)(x)\|(h+\abs{K}^{1/n}\eta(K))}\nn\\
&\quad+\nm{\uu-\cu}{L^2(D)}.
\end{aligned}
\end{equation}

The convergence rate of the approximated solution in $L^2$ consists of
two parts, the first one is how the solution approximates the
homogenized solution, which relies on the smoothness of the
homogenized solution, the size of the support of the transition
function $\rho$, and the error committed by the approximation of the
effective matrix. The second source of the error comes from the
convergence rate in $L^2$ for the homoginization problem. For any
bounded and measurable $\a$, $\nm{\uu-\cu}{L^2(D)}$ converges to zero
as $\eps$ tends to zero by H-convergence theory. More structures have
to be assumed on $\a$ if one were to seek for a convergence rate. There
are a lot of results for estimating $\nm{\uu-\cu}{L^2(D)}$ under
various conditions on $\a$. Roughly speaking,
$\nm{\uu-\cu}{L^2(D)}\simeq\mc{O}(\eps^{\gamma})$, where $\gamma$
depends on the properties of the coefficient $\a$ and the domain
$D$. We refer to~\cite{KenigLinShen:2012} for a careful study of this
problem for elliptic system with periodic coefficients.  For elliptic
systems with almost-periodic coefficients, we refer
to~\cite{Shen:2015} and references therein for related
discussions.

The proof of this theorem is in the same spirit of~\cite{NitscheSchatz:1974} by combining the ideas of~\cite{Demlow:2011} and~\cite{Schatz:2005}. In particular, the following Caccioppoli-type estimate for {\em discrete harmonic function}
is a natural adaption of~\cite[Lemma 3.3]{Demlow:2011}, which is crucial for the local energy error estimate.
\begin{lemma}
Let $K_0\subset K\subset D$ be given, and let $\text{dist}(K_0,\pa K\setminus\pa D)=d$. Let assumptions A$1$, A$2$ and A$3$ hold with $\wt{d}=d/4$, and assume that $u_h\in X_h$ satisfies
\[
\inner{\bb_h\na u_h}{\na v}=0\qquad\forall v\in X_h\cap H_{<}^1(K).
\]
In addition, let $\max_{\tau\cap K\not=\emptyset}h_{\tau}/d\le 1/4$. Then, there exists $C$ such that
\begin{equation}\label{eq:disharmonic}
\nm{u_h}{H^1(K_0)}\le\dfrac{C}{d}\nm{u_h}{L^2(K)},
\end{equation}
where $C$ depends only on the constants in~\eqref{eq:inverse},~\eqref{eq:suppapp}, $\Lam'$ and $\lam'$.
\end{lemma}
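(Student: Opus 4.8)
The plan is to establish a discrete counterpart of the classical Caccioppoli inequality for the uniformly elliptic operator $\divop(\bb_h\na\,\cdot\,)$. In the continuous setting one would simply test the equation against $\omega^2 u$ with a smooth cutoff $\omega$; the only genuine obstruction in the discrete setting is that $\omega^2 u_h\notin X_h$, which I circumvent by testing against its interpolant and paying a consistency error that A3 is precisely designed to control. Concretely, I first fix nested sets $K_0\subset G_1\subset G\subset K$ with $\dist(G_1,\pa G\setminus\pa D)=\wt{d}=d/4$, and choose $\omega\in C^\infty$ with $\omega\equiv 1$ on $K_0$, $\text{supp}\,\omega\subset G_1$, and $\abs{\omega}_{W^{j,\infty}}\le C\wt{d}^{-j}$ for $0\le j\le r$, so that $\omega$ meets the hypotheses of A3 and $\omega^2 u_h\in H_{<}^1(G_1)\cap C(G_1)$. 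By A1 the interpolant satisfies $I(\omega^2 u_h)\in X_h\cap H_{<}^1(G)\subset X_h\cap H_{<}^1(K)$, hence it is an admissible test function and discrete harmonicity yields $\inner{\bb_h\na u_h}{\na I(\omega^2 u_h)}=0$.

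Next I would exploit this orthogonality to write $\inner{\bb_h\na u_h}{\na(\omega^2 u_h)}=\inner{\bb_h\na u_h}{\na(\omega^2 u_h-I(\omega^2 u_h))}=:T$. The product rule $\na(\omega^2 u_h)=\omega^2\na u_h+2\omega u_h\na\omega$ together with the ellipticity of $\bb_h$ gives $\lam\nm{\omega\na u_h}{L^2(D)}^2\le\inner{\omega^2\bb_h\na u_h}{\na u_h}=T-2\inner{\omega u_h\bb_h\na u_h}{\na\omega}$. The cross term is harmless: by boundedness of $\bb_h$, Cauchy--Schwarz and Young's inequality it is at most $\tfrac{\lam}{4}\nm{\omega\na u_h}{L^2(D)}^2+C\nm{u_h\na\omega}{L^2(D)}^2$, and since $\abs{\na\omega}\le C/d$ with $\na\omega$ supported in $G_1\subset K$, the last norm is bounded by $Cd^{-2}\nm{u_h}{L^2(K)}^2$, while the first part is absorbed into the left-hand side.

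The heart of the matter is the consistency term $T$, which --- unlike the continuous Caccioppoli computation --- carries a \emph{bare} factor $\na u_h$ not weighted by $\omega$ and therefore not directly absorbable into $\nm{\omega\na u_h}{L^2(D)}$. This is exactly where the \emph{Superapproximation} estimate~\eqref{eq:suppapp} enters: because $\omega$ is supported in $G_1\subset G$, the product $\omega^2u_h$ lies in $H_{<}^1(G_1)$, and summing \eqref{eq:suppapp} over $\tau\in\mc{T}_h$ meeting $\text{supp}\,\omega$ bounds $\abs{T}$ by $C\sum_\tau\nm{\na u_h}{L^2(\tau)}\bigl(\tfrac{h_\tau}{\wt{d}}\nm{\na(\omega u_h)}{L^2(\tau)}+\tfrac{h_\tau}{\wt{d}^2}\nm{u_h}{L^2(\tau)}\bigr)$. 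The decisive step is to neutralize the bare gradient by the inverse inequality~\eqref{eq:inverse}, namely $\nm{\na u_h}{L^2(\tau)}\le Ch_\tau^{-1}\nm{u_h}{L^2(\tau)}$, which cancels precisely the factor $h_\tau$ produced by superapproximation and turns every summand into a product of $\wt{d}^{-1}\nm{u_h}{L^2(\tau)}$ with either $\nm{\na(\omega u_h)}{L^2(\tau)}$ or $\wt{d}^{-1}\nm{u_h}{L^2(\tau)}$. A Cauchy--Schwarz over elements, together with $\nm{\na(\omega u_h)}{L^2(D)}\le\nm{\omega\na u_h}{L^2(D)}+Cd^{-1}\nm{u_h}{L^2(K)}$, then bounds $T$ by $\tfrac{\lam}{4}\nm{\omega\na u_h}{L^2(D)}^2+Cd^{-2}\nm{u_h}{L^2(K)}^2$.

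Collecting the two absorptions leaves $\tfrac{\lam}{2}\nm{\omega\na u_h}{L^2(D)}^2\le Cd^{-2}\nm{u_h}{L^2(K)}^2$, i.e. $\nm{\omega\na u_h}{L^2(D)}\le Cd^{-1}\nm{u_h}{L^2(K)}$. Since $\omega\equiv 1$ on $K_0$ this controls $\nm{\na u_h}{L^2(K_0)}$, and because $D$ is bounded we have $d\le\text{diam}(D)$, so $\nm{u_h}{L^2(K_0)}\le\nm{u_h}{L^2(K)}\le Cd^{-1}\nm{u_h}{L^2(K)}$; the two together give~\eqref{eq:disharmonic}. The mesh conditions $\wt{d}=d/4$ and $\max_{\tau\cap K\not=\emptyset}h_\tau/d\le 1/4$ guarantee $h_\tau\le\wt{d}$, so that~\eqref{eq:suppapp} is applicable on all relevant elements and the transition layer of $\omega$ contains the elements needed for A1 and A3. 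I expect the only genuinely delicate point to be the consistency term $T$; once the pairing of superapproximation with the inverse estimate is arranged, the remaining manipulations are the routine Young and Cauchy--Schwarz absorptions familiar from the continuous Caccioppoli argument.
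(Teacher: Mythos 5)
Your proposal is correct and takes essentially the same route as the paper, which does not write out a proof but declares the lemma ``a natural adaption of''~\cite[Lemma 3.3]{Demlow:2011}: that argument is precisely your scheme of testing the discrete-harmonic equation with the interpolant $I(\omega^2 u_h)$ of a cutoff-weighted function, controlling the resulting consistency term by pairing the superapproximation property~\eqref{eq:suppapp} with the inverse estimate~\eqref{eq:inverse} so the factors $h_\tau$ cancel, and absorbing via Young's inequality. The only cosmetic discrepancies—coercivity of $\bb_h$ holds with constant $\lam\wedge\lam'$ rather than $\lam$, and your bound for the zeroth-order part of the $H^1(K_0)$ norm introduces a harmless dependence on $\mathrm{diam}(D)$—do not affect the argument.
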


\smallskip
\begin{proof}[Proof of Theorem~\ref{thm:dislener}]
Without loss of generality, we may assume that $K_0$ is the intersection of a ball $B_{d/2}$ with $D$, the general case
may be proved by a covering argument as in~\cite[Theorem 5.1 and Theorem 5.2]{NitscheSchatz:1974}. Let
$\wt{K}$ be the intersection of a ball $B_{3d/4}$ with $D$ and $K$ be the intersection of a ball $B_d$ with $D$. Therefore, we have $K_0\subset\subset\wt{K}\subset\subset K$, and $\text{dist}(K_0,\pa\wt{K}\setminus\pa D)=d/4$. Let $\wh{u}=\rho\uu$ and define
$\wh{u}_h\in X_h\cap H_{<}^1(K)$ as the local Galerkin projection of $\wh{u}$, i.e., $\wh{u}_h\in X_h\cap H_{<}^1(K)$
satisfying
\[
\inner{\bb_h\na(\wh{u}-\wh{u}_h)}{\na v}=F(v)\qquad\forall v\in X_h\cap H_{<}^1(K),
\]
where $F(v){:}=\inner{(\bb_h-\a)\na\uu}{\na v}$.
By coercivity of $\bb_h$, we immediately have the stability estimate
\begin{equation}\label{eq:stability}
\nm{\na\wh{u}_h}{L^2(K)}\le C\Lr{\nm{\na\wh{u}}{L^2(K)}+\nm{\na\uu}{L^2(K\cap K_1)}}
\end{equation}
for certain $C$ that depends only on $\Lam,\lam,\Lam'$ and $\lam'$.

By definition and recalling that $\rho\equiv 1$ on $\wt{K}$, we may verify that $\wh{u}_h-v_h$ is {\em discrete harmonic} in the sense that for any $v\in X_h\cap H_{<}^1(\wt{K})$, there holds
\begin{align*}
\inner{\bb_h\na(\wh{u}_h-v_h)}{\na v}&=\inner{\bb_h\na\wh{u}_h}{\na v}-\inner{f}{v}\\
&=\inner{\bb_h\na\wh{u}}{\na v}-F(v)-\inner{f}{v}\\
&=\inner{\bb_h\na\uu}{\na v}-\inner{\a\na\uu}{\na v}-F(v)\\
&=0.
\end{align*}

Using~\eqref{eq:disharmonic} and recalling that $\rho\equiv 1$ on $\wt{K}$, we obtain
\begin{align*}
\nm{\na(\uu-v_h)}{L^2(K_0)}&\le\nm{\na(\wh{u}-\wh{u}_h)}{L^2(K_0)}+\nm{\na(\wh{u}_h-v_h)}{L^2(K_0)}\\
&\le\nm{\na(\wh{u}-\wh{u}_h)}{L^2(K)}+\dfrac{C}{d}\nm{\wh{u}_h-v_h}{L^2(\wt{K})}\\
&\le\nm{\na(\wh{u}-\wh{u}_h)}{H^1(K)}+\dfrac{C}{d}\Lr{\nm{\wh{u}_h-\wh{u}}{L^2(\wt{K})}+\nm{\uu-v_h}{L^2(\wt{K})}}.
\end{align*}
Using Poincare's inequality, we obtain
\[
\nm{\wh{u}_h-\wh{u}}{L^2(\wt{K})}\le Cd\nm{\na(\wh{u}_h-\wh{u})}{L^2(\wt{K})}.
\]
Combining the above two inequalities, we obtain
\begin{equation}\label{eq:med2}
\nm{\na(\uu-v_h)}{L^2(K_0)}\le C\nm{\na(\wh{u}-\wh{u}_h)}{L^2(K)}+\dfrac{C}{d}\nm{\uu-v_h}{L^2(\wt{K})}.
\end{equation}

Next we use the triangle inequality and~\eqref{eq:stability}, we obtain, 
\begin{align*}
\nm{\na(\wh{u}-\wh{u}_h)}{L^2(K)}&\le C\Lr{\nm{\na\wh{u}}{L^2(K)}+\nm{\na\uu}{L^2(K\cap K_1)}}\\
&\le C\Lr{\nm{\na\uu}{L^2(K)}+d^{-1}\nm{\uu}{L^2(K)}+\nm{\na\uu}{L^2(K\cap K_1)}}\\
&\le C\Lr{\nm{\na\uu}{L^2(K)}+d^{-1}\nm{\uu}{L^2(K)}}.
\end{align*}
Substituting the above inequality into~\eqref{eq:med2}, we obtain
\[
\nm{\na(\uu-v_h)}{L^2(K_0)}\le C\Lr{
\nm{\na\uu}{L^2(K)}+d^{-1}\nm{\uu}{L^2(K)}+\dfrac{C}{d}\nm{\uu-v_h}{L^2(\wt{K})}}.
\]
For any $\chi\in X_h$, we write $\uu-v_h=(\uu-\chi)-(v_h-\chi)$, and we employ the above inequality with $\uu$ taken to be $\uu-\chi$ and
$v_h$ taken to be $v_h-\chi$. This implies
\begin{equation}\label{eq:medlocal}
\begin{aligned}
\nm{\na(\uu-v_h)}{L^2(K_0)}&\le C\inf_{\chi\in X_h}\Lr{
\nm{\na(\uu-\chi)}{L^2(K)}+d^{-1}\nm{\uu-\chi}{L^2(K)}}\\
&\quad+\dfrac{C}{d}\nm{\uu-v_h}{L^2(\wt{K})}.
\end{aligned}
\end{equation}
Let $\chi^\ast=\arg\inf_{\chi\in X_h}\nm{\na(\uu-\chi)}{L^2(K)}$, we let
\[
\chi=\chi^\ast+\negint_K(\uu-\chi^\ast)\dx.
\]
Using Poincare's inequality, there exists $C$ independent the size of $K$ such that
\[
\nm{\uu-\chi}{L^2(K)}=\nm{\uu-\chi^\ast-\negint_K(\uu-\chi^\ast)}{L^2(K)}\le Cd\nm{\na(\uu-\chi^\ast)}{L^2(K)}.
\]
Substituting the above inequality into~\eqref{eq:medlocal}, we
obtain~\eqref{eq:dislener} and complete the proof.
\end{proof}
\section{Numerical Examples}\label{sec:numer}
In this section, we present some numerical examples to demonstrate the accuracy and efficiency of the proposed method. The domain $K_0$ whose microstructure
is of interest is assumed to be a square for simplicity, i.e., $K_0=(-L,L)^2$. The first step in implementation is to construct the transition function $\rho$.
For any parameter $\delta>0$, we let $\gamma_\delta: [-L-\delta,L+\delta]\to [0,1]$ be a first order differentiable function
such that $\gamma_\delta(t)\equiv 1$ for $0\le t\le L$, and $\gamma_\delta^\prime(L)=0,\gamma_\delta(L+\delta)=\gamma_\delta^\prime(L+\delta)=0$. Moreover, $\gamma_\delta$
is an even function with respect to the origin. We may extend $\gamma_\delta$ to a function defined on $\RR$ by taking
$\gamma_\delta(t)\equiv 0$ for $\abs{t}\ge L+\delta$. Finally we define the transition
function $\rho(x){:}=\gamma_\delta(x_1)\gamma_\delta(x_2)$. In particular, we denote $K{:}=(-L-\delta,L+\delta)^2$.

For both examples, we compute
\[
\nm{u^\eps-v_h}{H^1(K_0)}\qquad\text{and}\qquad\nm{u_0-v_h}{H^1(D\backslash K)},
\]
which are the two quantities of major interest, and the domain $D=(0,1)^2$. The reference solutions $u^\eps$ and $u_0$ are not available analytically, we compute both $u^\eps$ and $u_0$ over very refined mesh, and the details will be given below.
\subsection{An example with two scales}\label{subsec:2ex}
In this example, we take
\[
a^\eps(x)=\dfrac{(R_1+R_2\sin(2\pi x_1))(R_1+R_2\cos(2\pi x_2))}{(R_1+R_2\sin(2\pi x_1/\eps))(R_1+R_2\sin(2\pi x_2/\eps))}I,
\]
where $I$ is a two by two identity matrix. The effective matrix is given by
\[
\mc{A}(x) =\dfrac{(R_1+R_2\sin(2\pi x_1))(R_1+R_2\cos(2\pi x_2))}{R_1\sqrt{R_1^2-R_2^2}}I.
\]
In the simulation, we let $R_1 = 2.5, R_2 = 1.5$ and $\eps=0.01$. The forcing term $f\equiv 1$ and the
homogeneous Dirichlet boundary condition is imposed. The subdomain around the defect is
$K_0 =(0.5,0.5)+(-L,L)^2$ with $L=0.05$.

We compute $u^\eps$ with $P_1$ Lagrange element over a uniform mesh with mesh size $3.33e-4$, which amounts to putting
$30$ points inside each wave length, i.e., $h\simeq\eps/30$. The
homogenized solution $u_0$ is computed by directly solving the
homogenized problem~\eqref{homoell} with $P_1$ Lagrange
element over a uniform mesh with mesh size $3.33e-4$, and the above
analytical expression for $\mc{A}$ is employed in the simulation. We take these
numerical solutions as the reference solutions, which are still
denoted by $u^\eps$ and $u_0$, respectively. Note that the mesh is a
body-fitted grid with respective to the defect domain $K_0$.

We solve the problem~\eqref{eq:varaarlequinapp} over a non-uniform body-fitted mesh as in Figure~\ref{Abdullecoefununiformmesh}.
\begin{figure}[h]
  \begin{minipage}{0.48\linewidth}
    \centerline{\includegraphics[width=6cm]{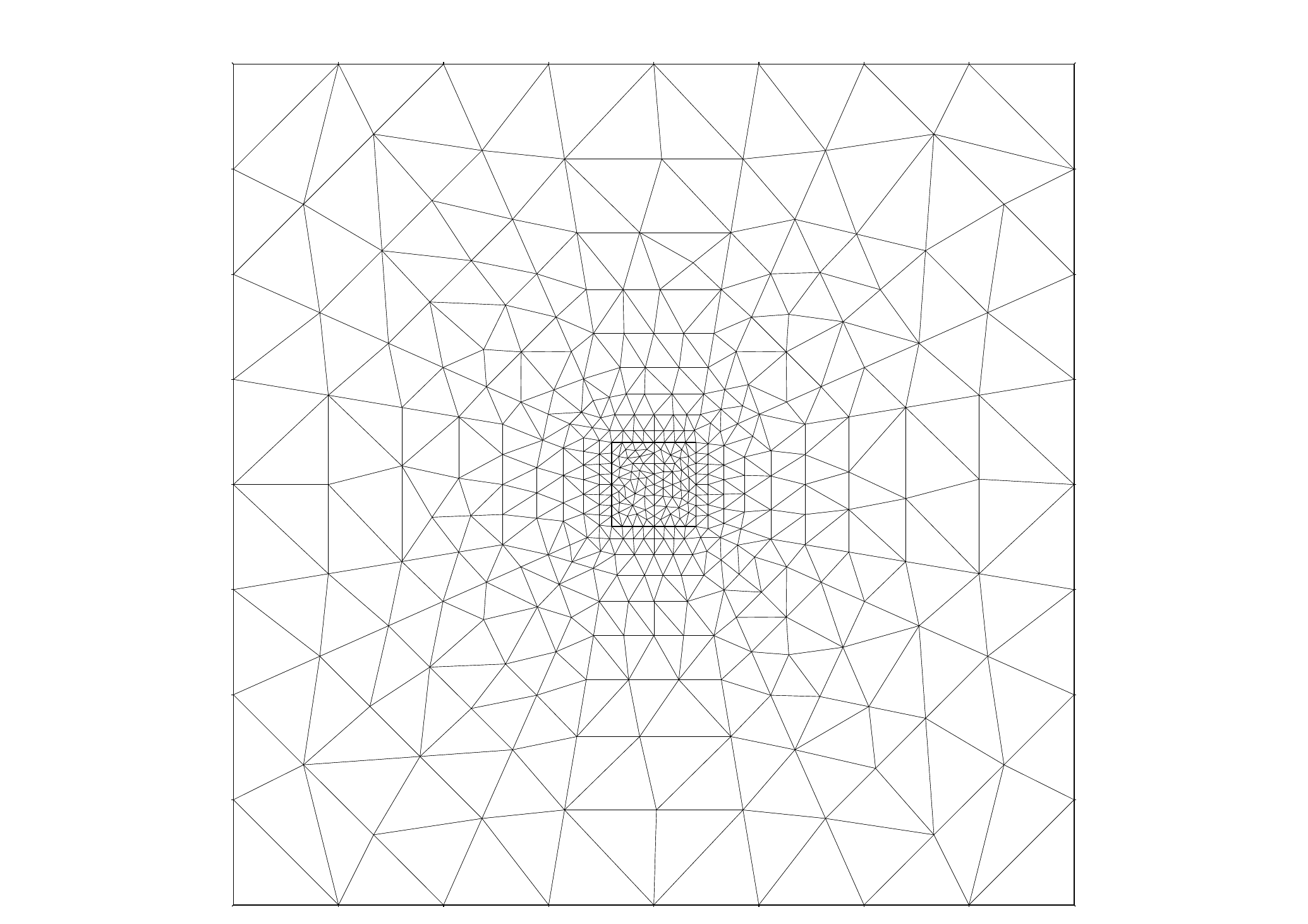}}
  \end{minipage}
  \caption{Mesh for the concurrent method}\label{Abdullecoefununiformmesh}
\end{figure}
The solutions $\uu,\cu$ and $v_h$ are plotted in
Figure~\ref{tangcouplSoluFigview1}, which look almost identical to
each other on the macroscopic scale.
\begin{figure}[h]
		\begin{minipage}{0.48\linewidth}
			\centerline{\includegraphics[width=16cm,height=4cm]{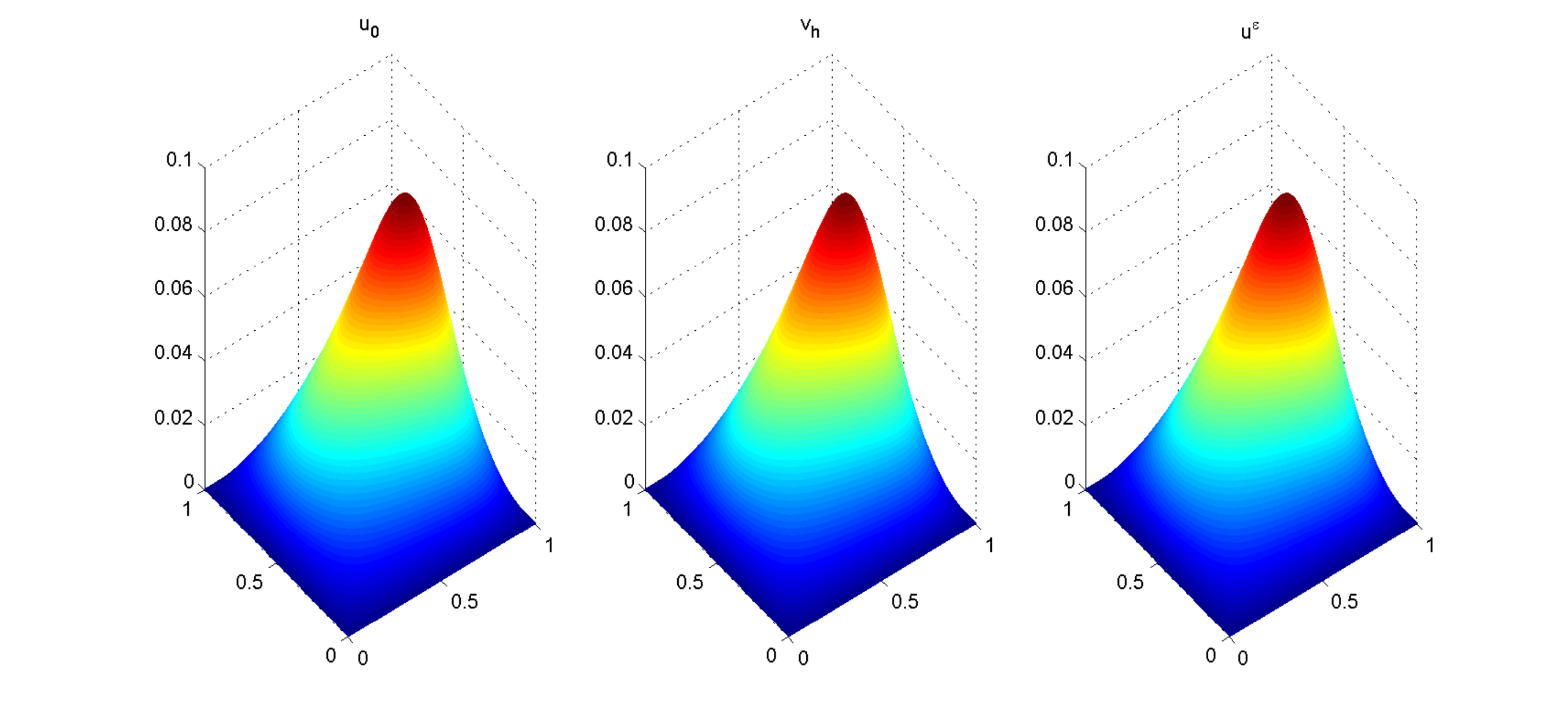}}
		\end{minipage}\hfill\vfill
\caption{Solutions in D and $\delta=0.05$. Left: the homogenized solution $\cu$; Middle: the solution of the concurrent method; Right: the solution of Problem~\eqref{eq:ell}.}\label{tangcouplSoluFigview1}
\end{figure}

We plot the zoomed-in solutions inside the defect domain $K_0$ in
Figure~\ref{view2}, it seems the hybrid solution approximates the
original solution very well because it captures the oscillation of the
microstructures inside $K_0$.
\begin{figure}[h]
	\begin{minipage}{0.48\linewidth}
		\centerline{\includegraphics[width=16cm,height=3cm]{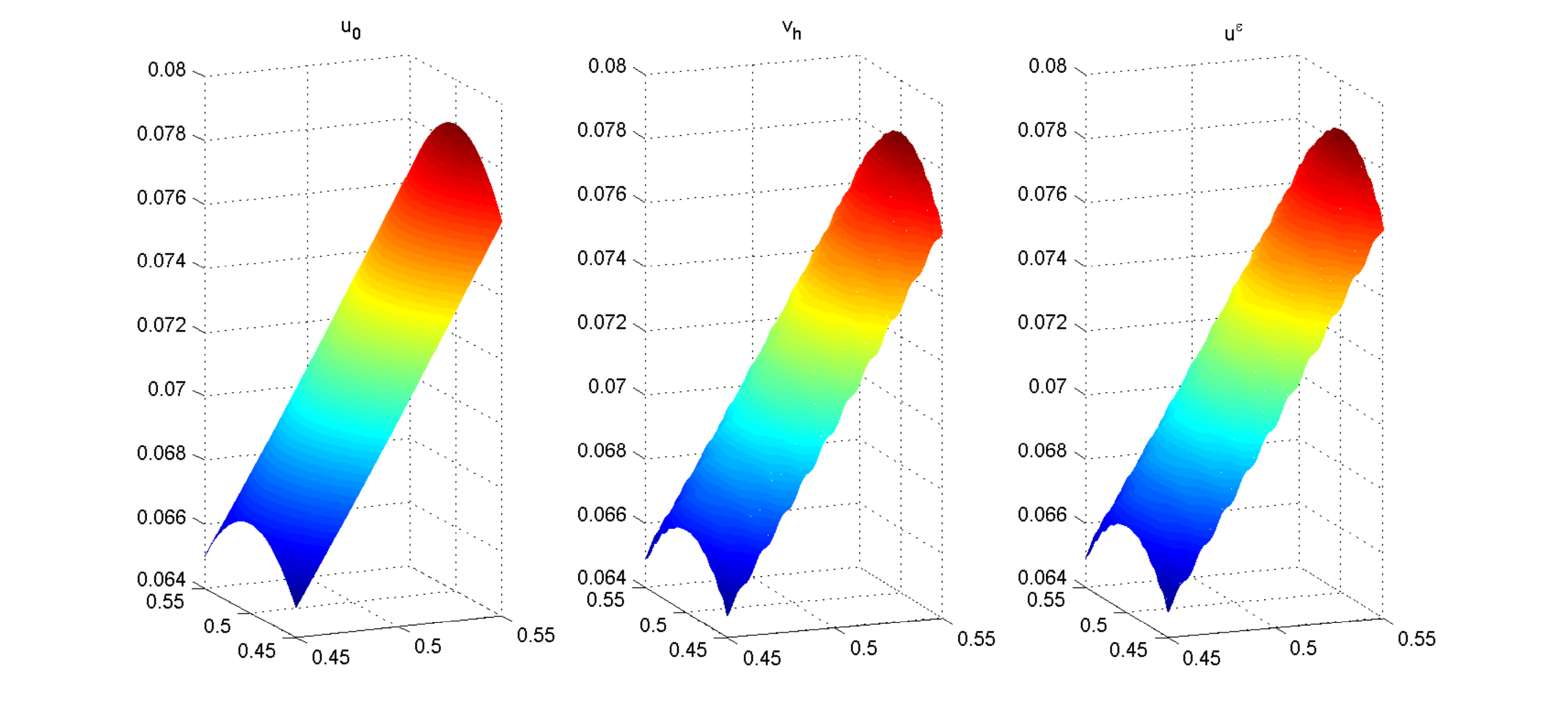}}
	\end{minipage}
	\hfill\vfill\caption{Solutions inside the defect domain $K_0$ and $\delta=0.05$. Left: the homogenized solution $\cu$; Middle: the solution of the concurrent method;
Right: the solution of Problem~\eqref{eq:ell}.}\label{view2}
\end{figure}

Next we plot the zoomed-in solutions in $D\backslash K$ in Figure~\ref{tangcouplSoluFigview3}, i.e., outsie the defect domain $K_0$, it seems that the hybrid solution approximates
the homogenized solution very well because it is as smooth as the homogenized solution, while there is oscillation in $\uu$.
\begin{figure}[h]
	\begin{minipage}{0.48\linewidth}
		\centerline{\includegraphics[width=16cm,height=3cm]{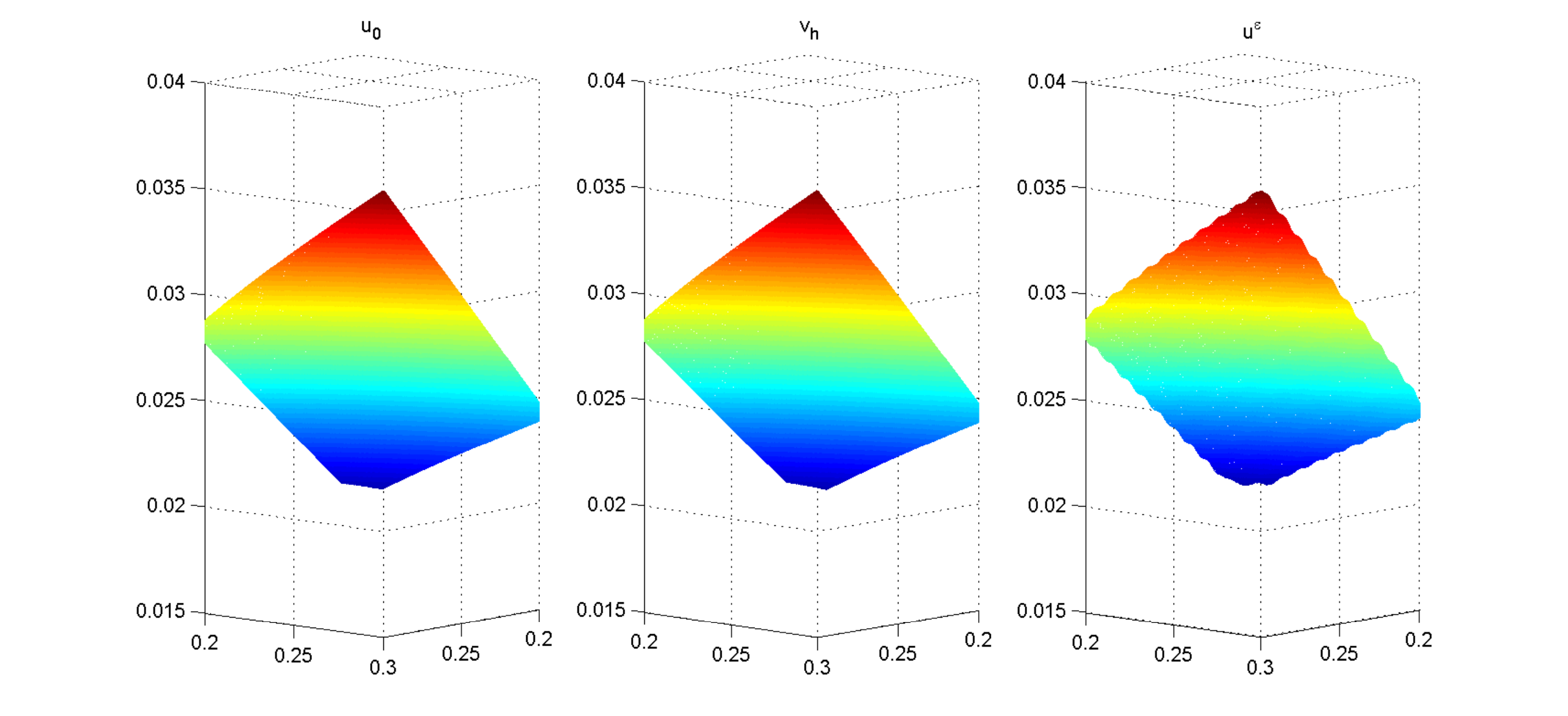}}
	\end{minipage}
\hfill\vfill
	\caption{Solution in a subdomain of $D/K$ and $\delta=0.05$.  Left: the homogenized solution $\cu$; Middle: the solution of the concurrent method; Right: the solution of Problem~\eqref{eq:ell}.}\label{tangcouplSoluFigview3}
\end{figure}

The localized error $\nm{u^\eps-v_h}{H^1(K_0)}$ is shown in Figure~\ref{Ex1Fig1}(a) for both the smooth and nonsmooth transition functions. Here the nonsmooth transition function is the characteristic function of $K_0$. The results show first that the parameter $\delta$ has little influence on the local energy error, and secondly the result obtained by the nonsmooth transition function is less accurate.
\begin{figure}[htbp]
		\begin{minipage}{0.48\linewidth}
	\centerline{\includegraphics[width=6.8cm]{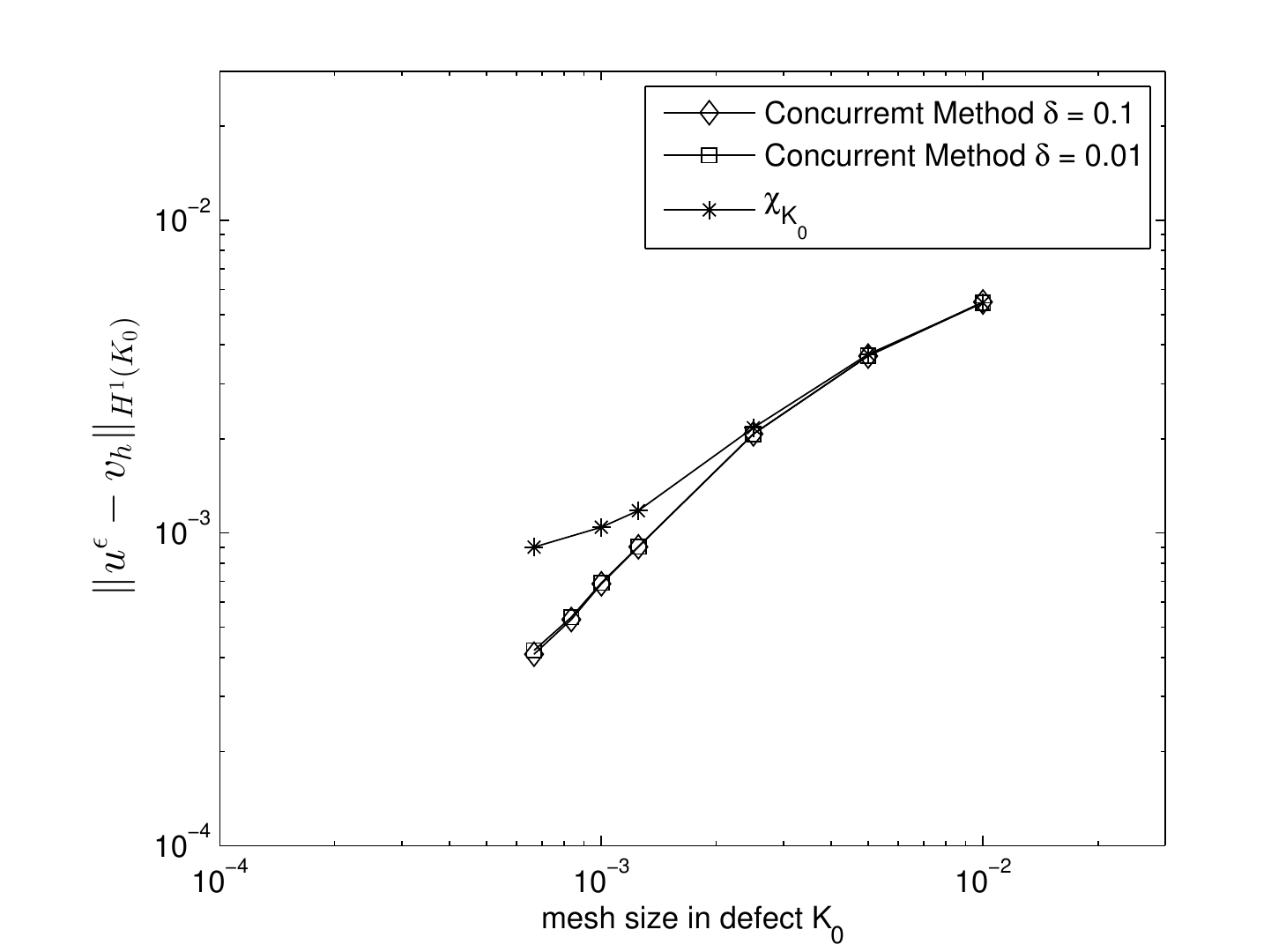}}
	\centerline{(a)}
 	\end{minipage}
	\begin{minipage}{0.48\linewidth}
		\centerline{\includegraphics[width=6.8cm]{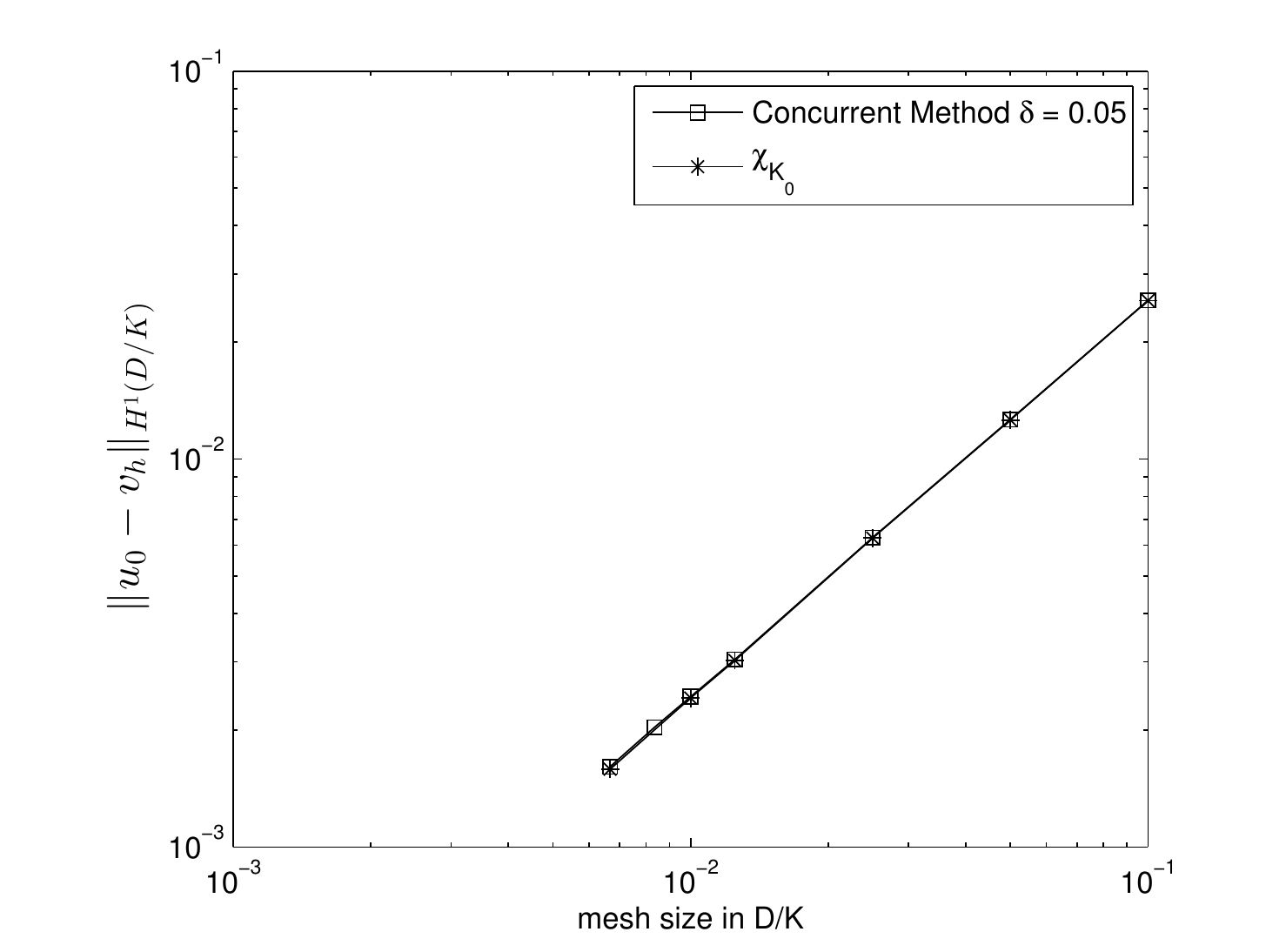}}
			\centerline{(b)}
	\end{minipage}
	\caption{(a) Localized H$^1$ error with respective to the mesh size inside the defect domain $K_0$. (b) Error between the hybrid solution and the homogenized solution outside $K$.}
	\label{Ex1Fig1}
\end{figure}

Finally, we plot the error $\nm{u_0-v_h}{H^1(D\backslash K)}$ in Figure~\ref{Ex1Fig1}(b) for both the smooth and nonsmooth transition functions. For fixed $L$, this quantity decreases as the mesh outside $K$ is refined. It seems that the smoothness of the transition function has little effect on the accuracy of the homogenized solution, which is consistent with the theoretical results.

The results in Table~\ref{tab:homohybrid} show the convergence rate
of the hybrid solution to the homogenized solution. It is optimal in the sense that
the solution of the hybrid problem converges to the homogenized solution with first order in the energy norm, and it converges with second order in the $L^2$ norm. This seems
consistent with the theoretical estimates~\eqref{eq:diserrhomo1} and~\eqref{eq:diserrhomo2}, because
\[
\nm{\na(\cu-v_h)}{L^2(D)}\le C\Lr{h+L\abs{\ln L}^{1/2}},\quad
\nm{\cu-v_h}{L^2(D)}\le C\Lr{h^2+L^2\abs{\ln L}}.
\]
When $L\simeq h$, the convergence rate is of first order with respect to the energy norm, while the mesh size is smaller than $L$, the dominant term in the error bound is $L\abs{\ln L}^{1/2}$, the convergence rate deteriorates a little bit, which is clear from the last line of Table~\ref{tab:homohybrid}. The same scenario applies to the $L^2$ error estimate.
\begin{table}[htbp]
  \centering
  \caption{\small Error between the hybrid solution and the homogenized solution outside $K$.}\label{tab:homohybrid}
  \begin{tabular}{|c|c|c|c|c|}\hline
    h   &$\nm{u_0-v_h}{L^2(D\backslash K)}$ &order  &$\nm{u_0-v_h}{H^1(D\backslash K)}$ &order  \\\hline
      1/10	&	9.04E-04	&		&	2.56E-02	&		\\ \hline
      1/20	&	2.47E-04	&	1.87	&	1.26E-02	&	1.02	\\ \hline
      1/40	&	7.96E-05	&	1.64	&	6.28E-03	&	1.01	\\ \hline
      1/80	&	3.41E-05	&	1.22	&	3.04E-03	&	1.04	\\ \hline
      1/160     	&	2.47E-05	&	0.47	&	1.61E-03	&	0.92	\\ \hline

  \end{tabular}
\end{table}
\subsection{An example without scale separation in the defect domain}\label{subsec:1ex}
The setup for the second example is the same with the first one except that the coefficient is replaced by
\(a^\eps =\chi_{K_0} \tilde{a}+(1-\chi_{K_0})\tilde{a}^\eps\),
where
\[
\tilde{a}(x)= 3 + \frac17\sum_{j=0}^4\sum_{i=0}^j\frac{1}{j+1}\cos\left( \left\lfloor 8(ix_2-\frac{x_1}{i+1})\right\rfloor+\left\lfloor 150ix_1\right\rfloor +\left\lfloor 150x_2\right\rfloor\right),
\]
and
\[
\tilde{a}^\eps(x)=\left(2.1+\cos(2\pi x_1/\eps)\cos(2\pi x_2/\eps)+\sin(4x_1^2x_2^2)\right)I.
\]
The above coefficient is taken from~\cite{Abdulle:15}, which has no clear scale inside $K_0$; while it is locally periodic outside $K_0$. We plot
the coefficient $a^\eps$ in Figure~\ref{AbdullecoefD0607} with $\eps = 0.1$. 
 \begin{figure}[h]
 	\begin{minipage}{0.48\linewidth}
 		\centerline{\includegraphics[width=6cm]{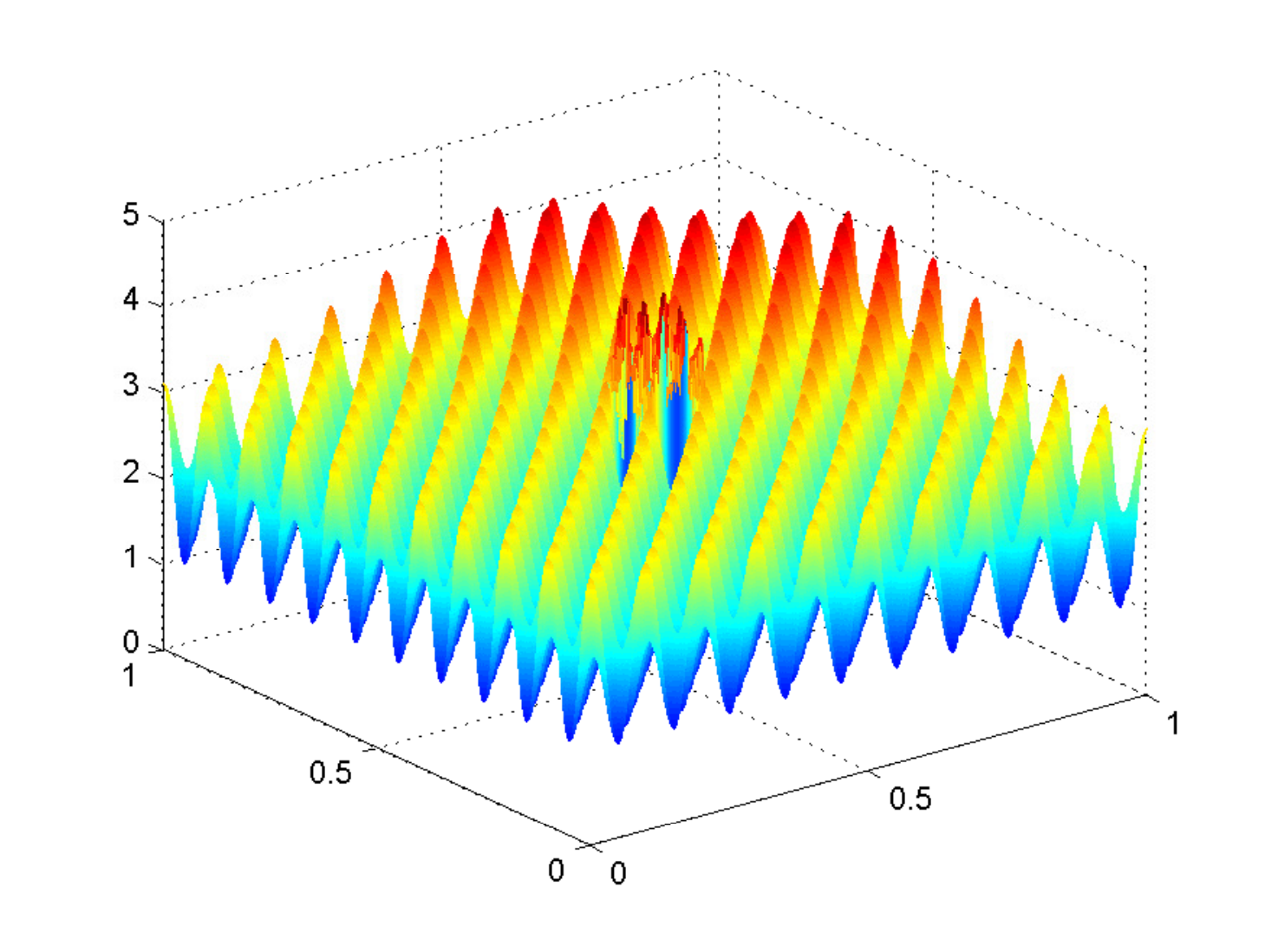}}
 	\end{minipage}
 	\hfill
 	\begin{minipage}{0.48\linewidth}
 		\centerline{\includegraphics[width=6cm]{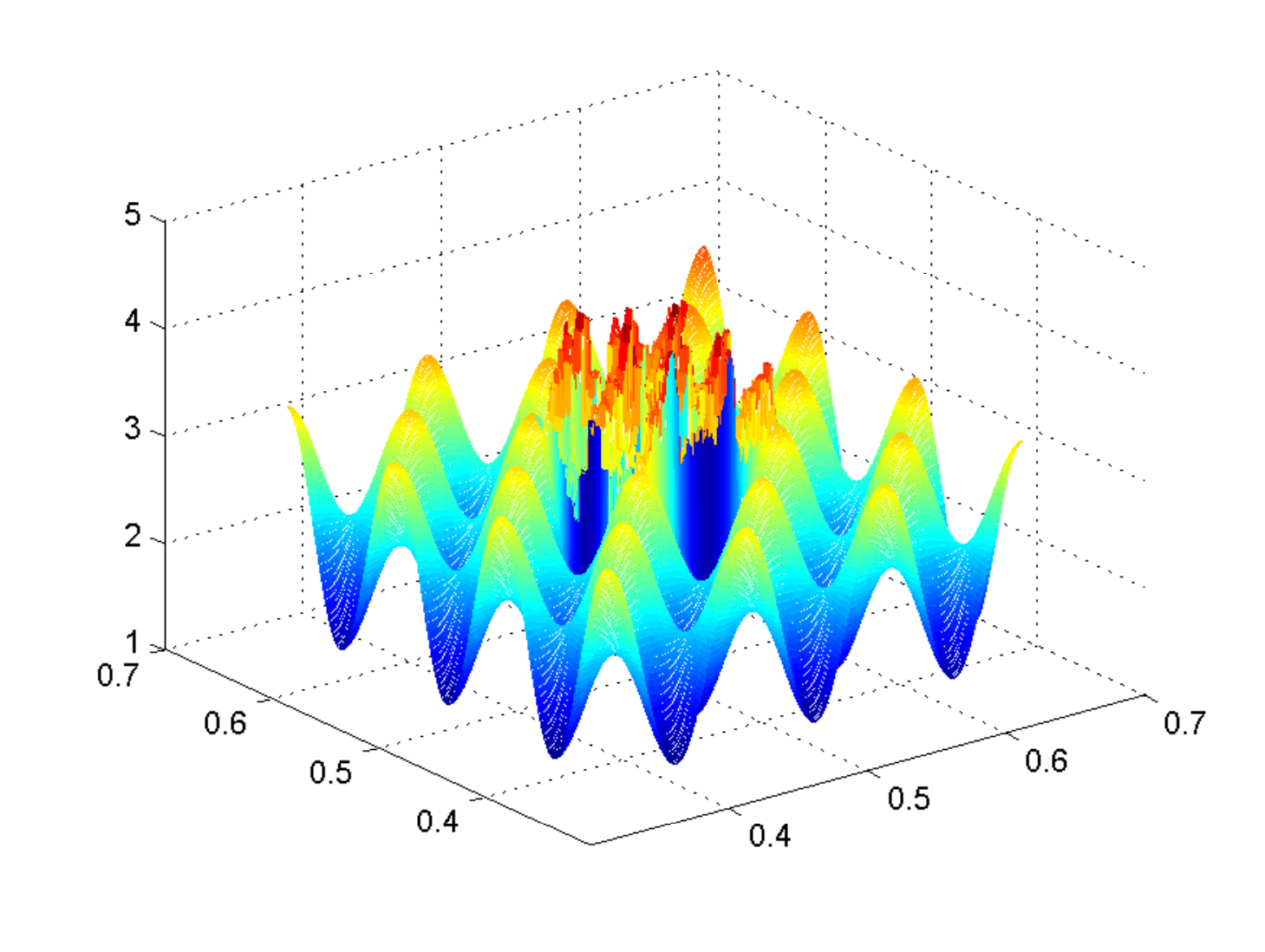}}
 	\end{minipage}
 	\caption{Coefficient $a^\eps$ with $\eps=0.1$. The right one is the zoomed-in plot near the defect.}
 	\label{AbdullecoefD0607}
 \end{figure}

We let $\eps=0.0063$ for the sake of comparison with those in~\cite{Abdulle:15} and compute $u^\eps$ over a uniform mesh with mesh size $3.33e-4$. By Corollary~\ref{coro:hlimit} and the identity~\eqref{eq:exact1}, the effective matrix $\mc{A}=\chi_{K_0}\wt{a}+(1-\chi_{K_0})\wt{A}$ and the approximating effective matrix $\mc{A}_h=\chi_{K_0}\wt{a}+(1-\chi_{K_0})\wt{A}_h$, where $\wt{A}_h$ is an approximation of the effective matrix associated with $\wt{a}^\eps$ through a fast solver based on the discrete least-squares reconstruction in the framework of HMM (see~\cite{LiMingTang:2012} and~\cite{HuangLiMing:2016} for details of such fast algorithm). We reconstruct $\wt{A}_h$ to high accuracy so that the reconstruction error is negligible. The homogenized solution $u_0$ is computed by solving Problem~\eqref{homoell} with $\mc{A}$, which has also been used in solving the hybrid problem~\eqref{eq:varaarlequinapp}, i.e.,
\begin{align*}
b_h^\eps&=\rho\chi_{K_0}\wt{a}+\rho(1-\chi_{K_0})\wt{a}^\eps+(1-\rho)\chi_{K_0}\wt{a}+(1-\rho)(1-\chi_{K_0})\wt{A}_h\\
&=\chi_{K_0}\wt{a}+(1-\chi_{K_0})\Lr{\rho\wt{a}^\eps+(1-\rho)\wt{A}_h}.
\end{align*}%

We solve Problem~\eqref{eq:varaarlequinapp} over a non-uniform mesh as in Figure~\ref{Abdullecoefununiformmesh}, and plot $\uu,v_h$ and $\cu$ and the zoomed-in solution in Figure~\ref{AbdullecoefD0607Abdullep0063solution}. The difference among them are small because there is no explicit scale inside the defect domain $K_0$.
\begin{figure}[h]
	\begin{minipage}{0.48\linewidth}
		\centerline{\includegraphics[width=16cm,height=4cm]{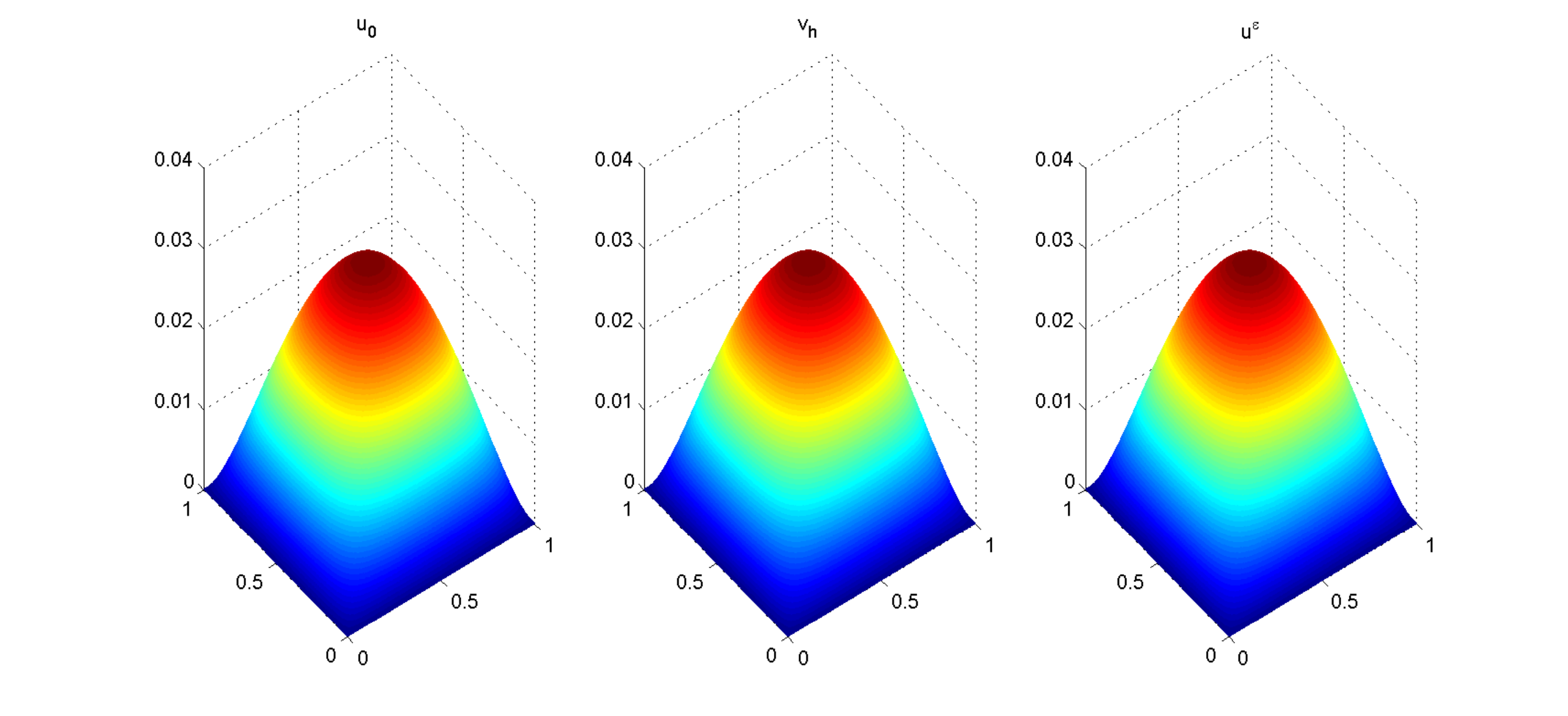}}
		\centerline{(a) Solutions in D}
	\end{minipage}
	\hfill\vfill
	\hfill\vfill
	\begin{minipage}{0.48\linewidth}
		\centerline{\includegraphics[width=16cm,height=3cm]{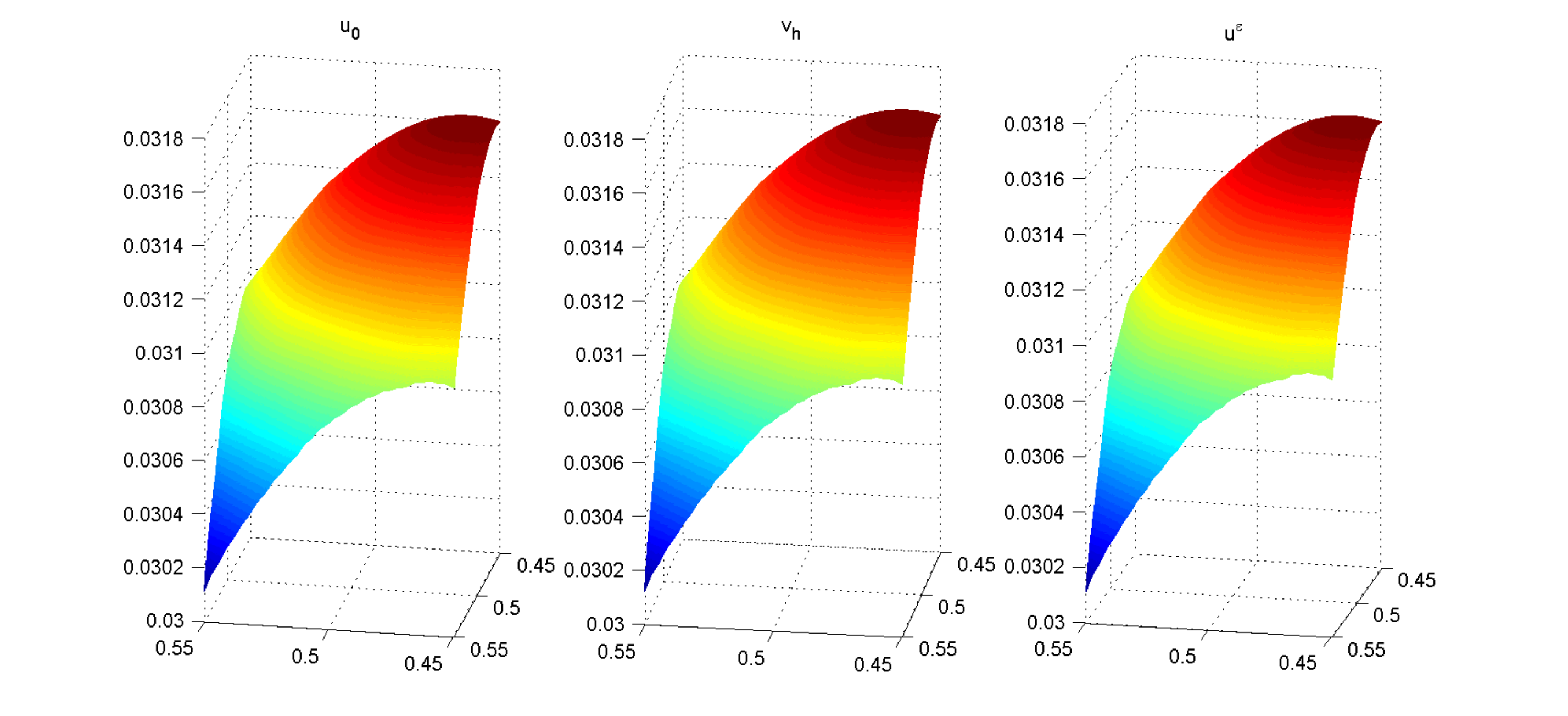}}
		\centerline{(b) Solutions in $K_0$.}
	\end{minipage}
			\hfill\vfill
			\begin{minipage}{0.48\linewidth}
				\centerline{\includegraphics[width=16cm,height=3cm]{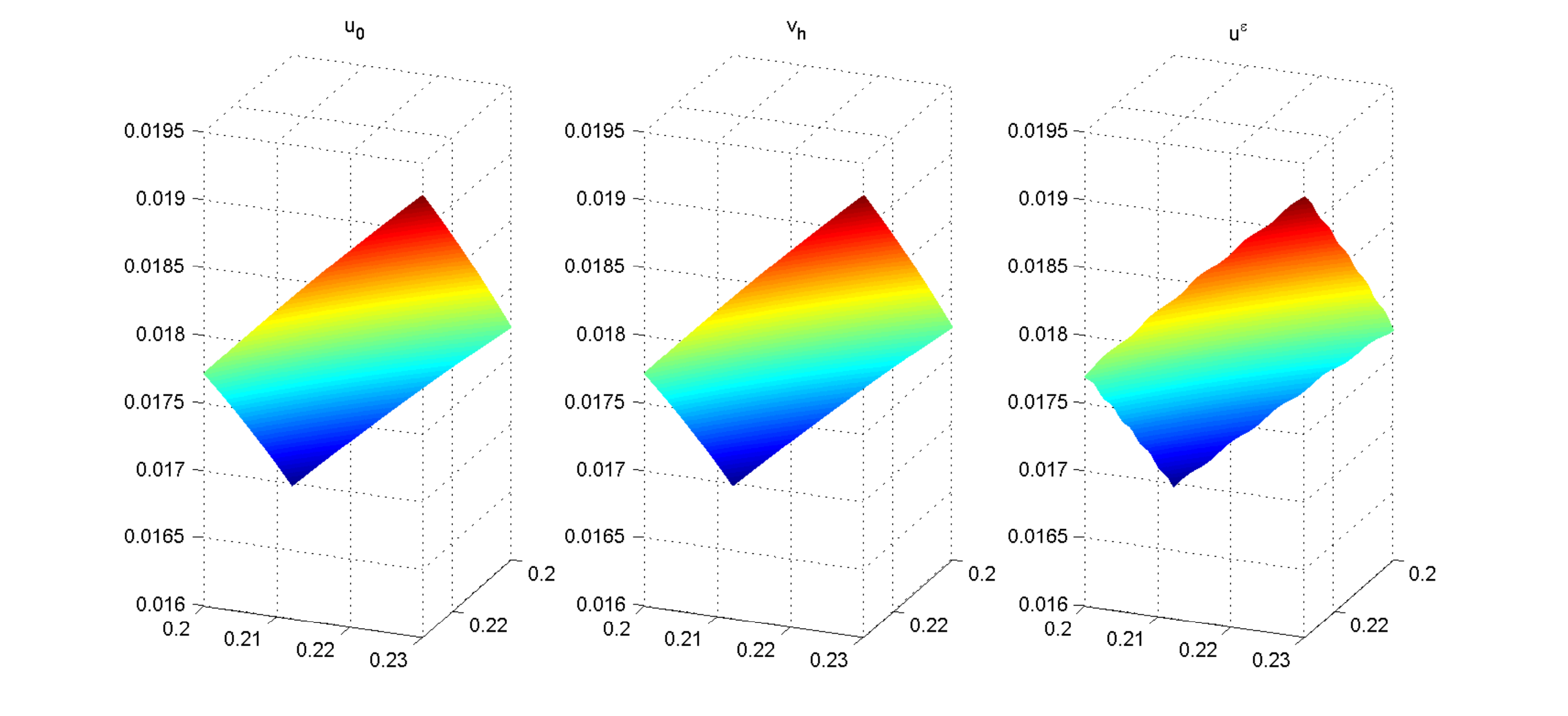}}
				\centerline{(c) Solutions in a subdomain of $D/K$.}
			\end{minipage}
	
	\caption{The solution of hybrid method with $\delta=0.05$ in the simulation.}\label{AbdullecoefD0607Abdullep0063solution}
\end{figure}

We plot the localized $H^1$ error in Figure~\ref{Ex2Fig1}(a). It seems the hybrid method converges slightly faster than the direct method, and the parameter $\delta$ has no significant effect on the results.
\begin{figure}[htbp]
	\begin{minipage}{0.48\linewidth}
		\centerline{\includegraphics[width=6.8cm]{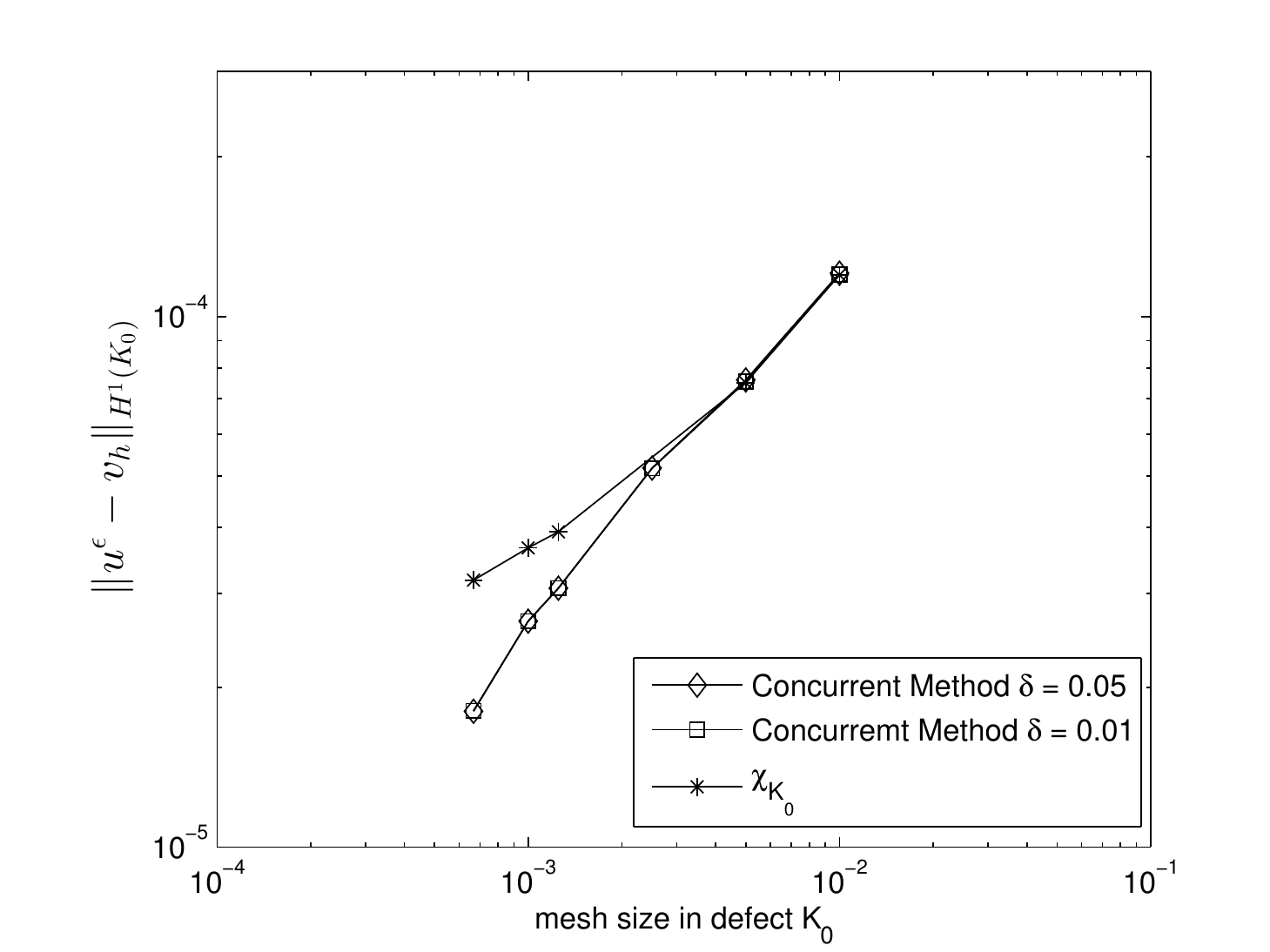}}
		\centerline{(a)}
	\end{minipage}
	\begin{minipage}{0.48\linewidth}
		\centerline{\includegraphics[width=6.8cm]{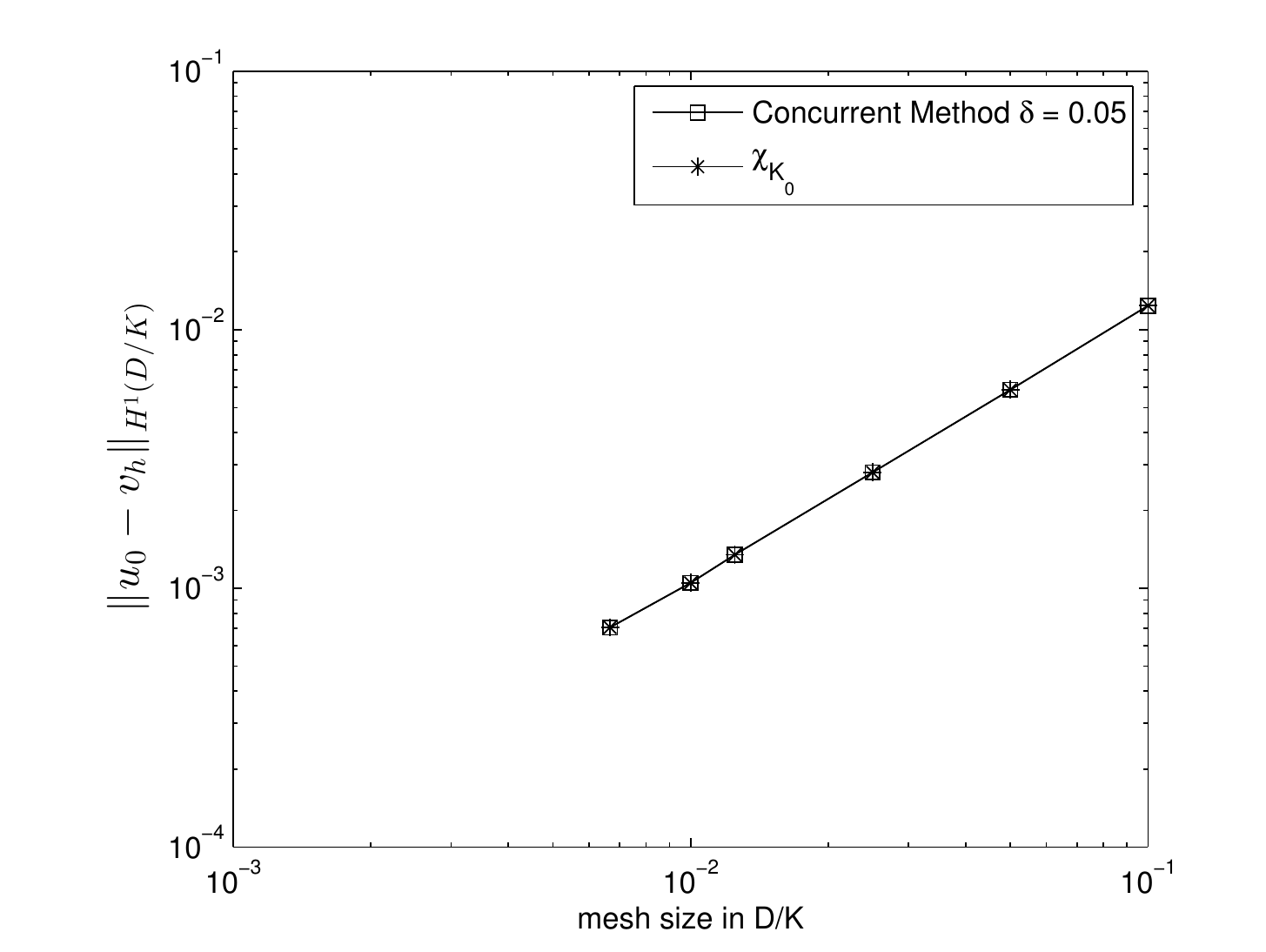}}
		\centerline{(b)}
	\end{minipage}
	\caption{(a) Localized H$^1$ error with respective to the mesh size inside $K$ with different $\delta$. (b) Error between the hybrid solution and the homogenized solution outside $K$. }
	\label{Ex2Fig1}
\end{figure}

Next we plot the error outside $K$ in Figure~\ref{Ex2Fig1}(b). The results in Table~\ref{tab:homohybridex2} shows that the convergence rate of the hybrid solution to the homogenized solution is optimal with respect to both the energy norm and the $L^2$ norm.
\begin{table}[htbp]
	\centering
	\caption{\small Error between the hybrid solution and the homogenized solution outside $K$.}\label{tab:homohybridex2}
	\begin{tabular}{|c|c|c|c|c|}\hline
		h   &$\nm{u_0-v_h}{L^2(D\backslash K)}$ &order  &$\nm{u_0-v_h}{H^1(D\backslash K)}$ &order  \\\hline
	  1/10	&	3.93E-04	&		&	1.24E-02	&		\\ \hline
	  1/20	&	8.92E-05	&	2.14	&	5.86E-03	&	1.08	\\ \hline
	  1/40	&	2.06E-05	&	2.11	&	2.81E-03	&	1.06	\\ \hline
	  1/80	&	4.92E-06	&	2.07	&	1.35E-03	&	1.06	\\ \hline
	  1/160     	&	1.45E-06	&	1.76	&	7.05E-04	&	0.94	\\ \hline	
	\end{tabular}
\end{table}
\subsection{Comparison with the global-local approach}
In this part, we compare the present method with the global-local
method~\cite{OdenVemaganti:2000}. The local region is
$\Omega_\eta=(0.5,0.5)+(-L-\eta,L+\eta)^2$ for a positive parameter
$\eta$. The recovered solution is denoted by $\wt{u}^\epsilon$. The results for Example~\ref{subsec:2ex} and Example~\ref{subsec:1ex} are plotted in
Figure~\ref{Ex1Fig2} and Figure~\ref{Ex2Fig2}, respectively. The results in both figures show that the concurrent
approach yields comparable results with those obtained by the
global-local approach, and the concurrent approach being slightly
more accurate. Moreover, it seems the parameter $\eta$ has little
effect on the
accuracy of the global-local method.
\begin{figure}[htbp]
	\begin{minipage}{0.48\linewidth}
		\centerline{\includegraphics[width=6.8cm]{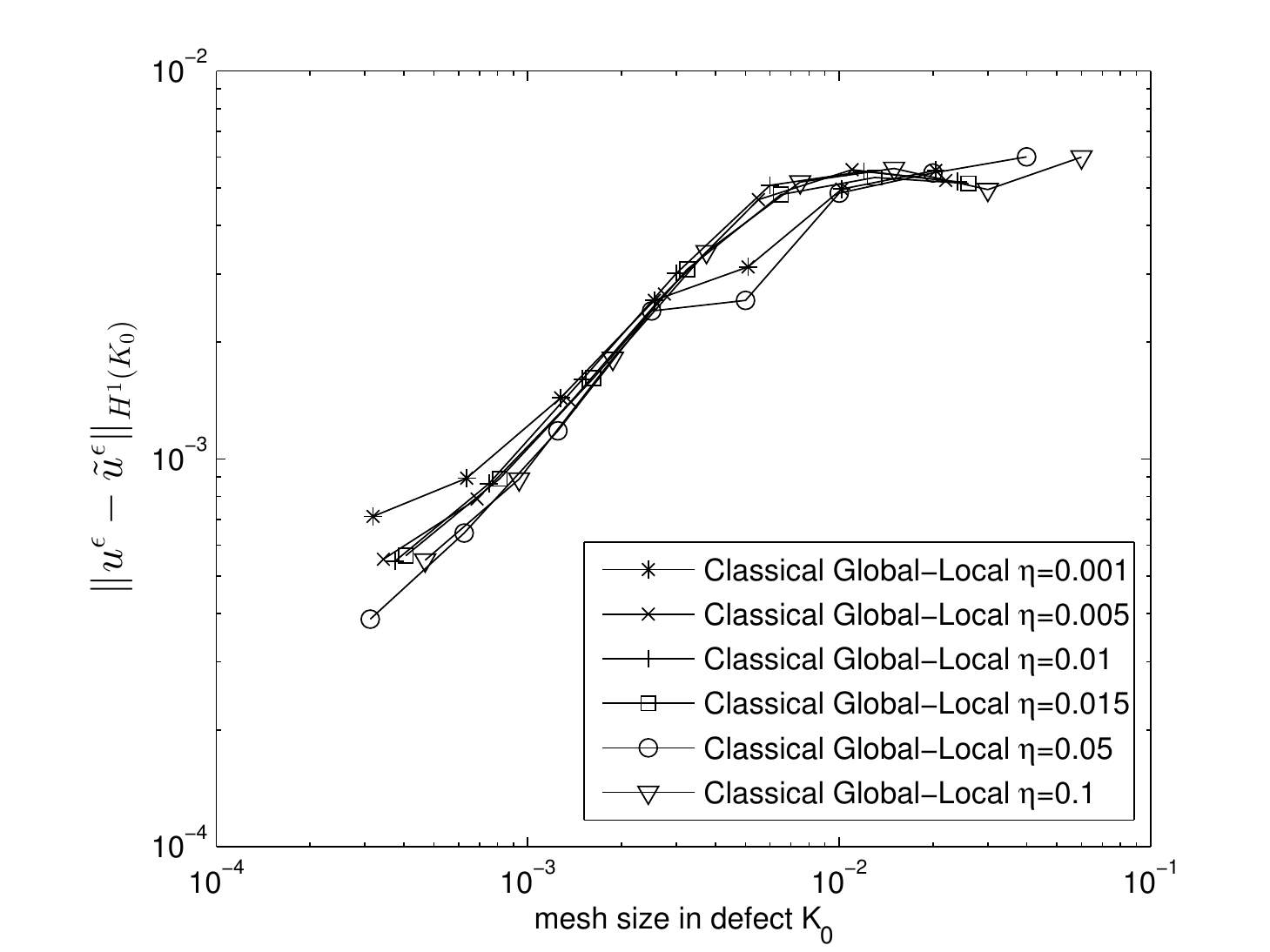}}
		\centerline{(a)}
	\end{minipage}
	\begin{minipage}{0.48\linewidth}
		\centerline{\includegraphics[width=6.8cm]{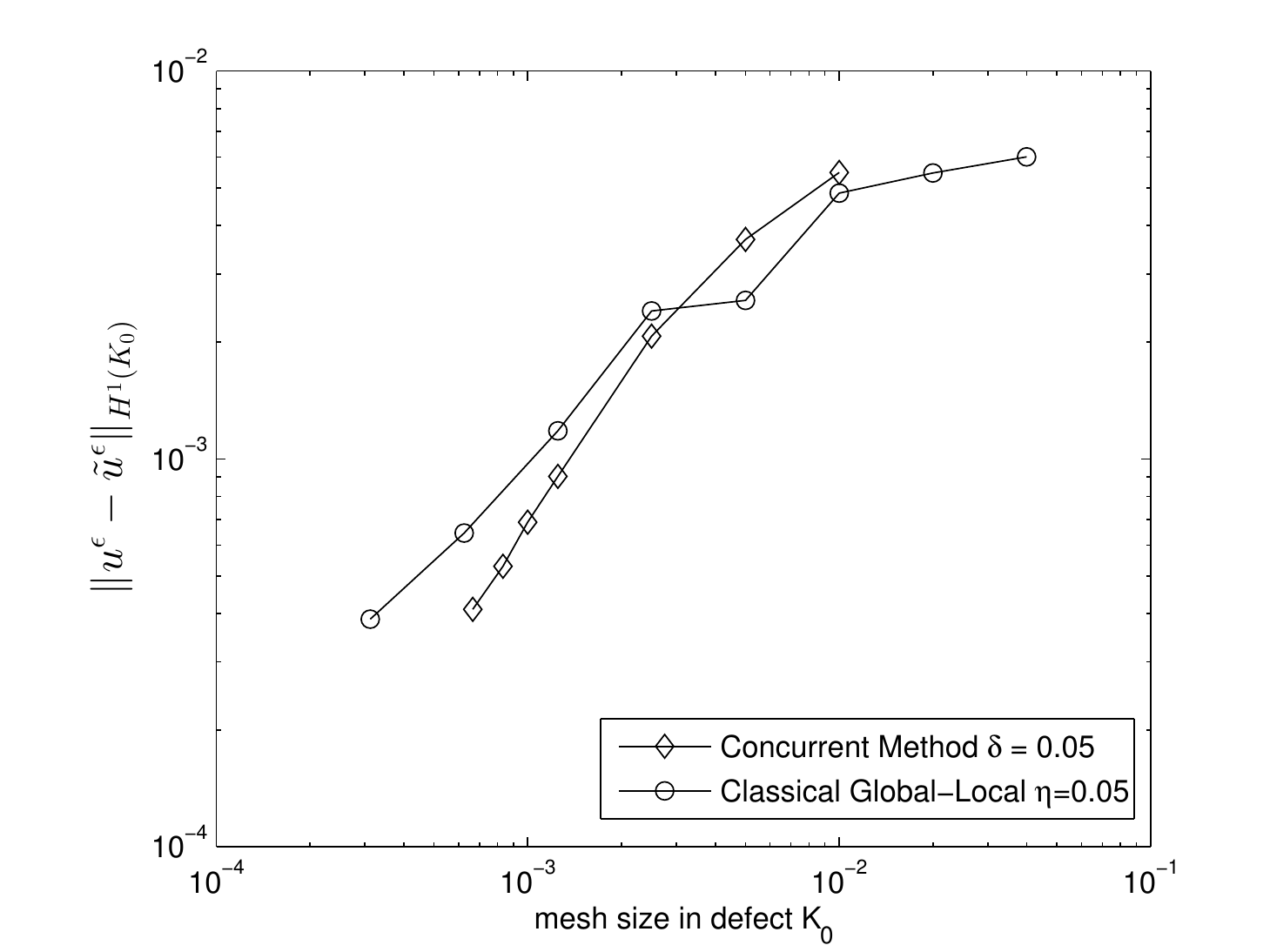}}
		\centerline{(b)}
	\end{minipage}
	\caption{(a)Localized H$^1$ error inside $K_0$ for Example~\ref{subsec:2ex} by the global-local method with different $\eta$. (b) Comparison of the localized H$^1$ error inside $K_0$ for Example~\ref{subsec:2ex} with the global-local approach and the concurrent method.}\label{Ex1Fig2}
\end{figure}
\begin{figure}[htbp]
	\begin{minipage}{0.48\linewidth}
		\centerline{\includegraphics[width=6.8cm]{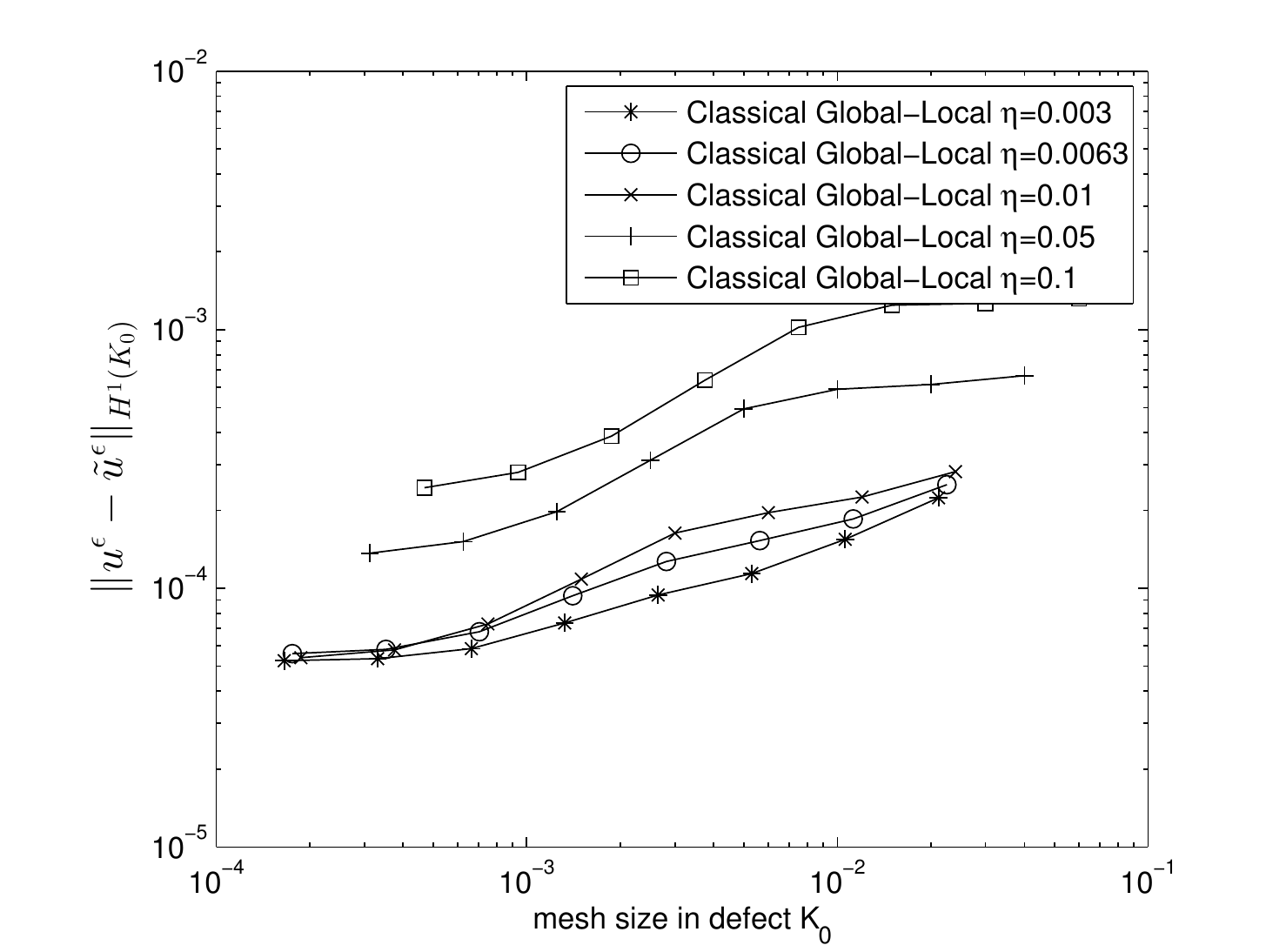}}
		\centerline{(a)}
	\end{minipage}
	\begin{minipage}{0.48\linewidth}
		\centerline{\includegraphics[width=6.8cm]{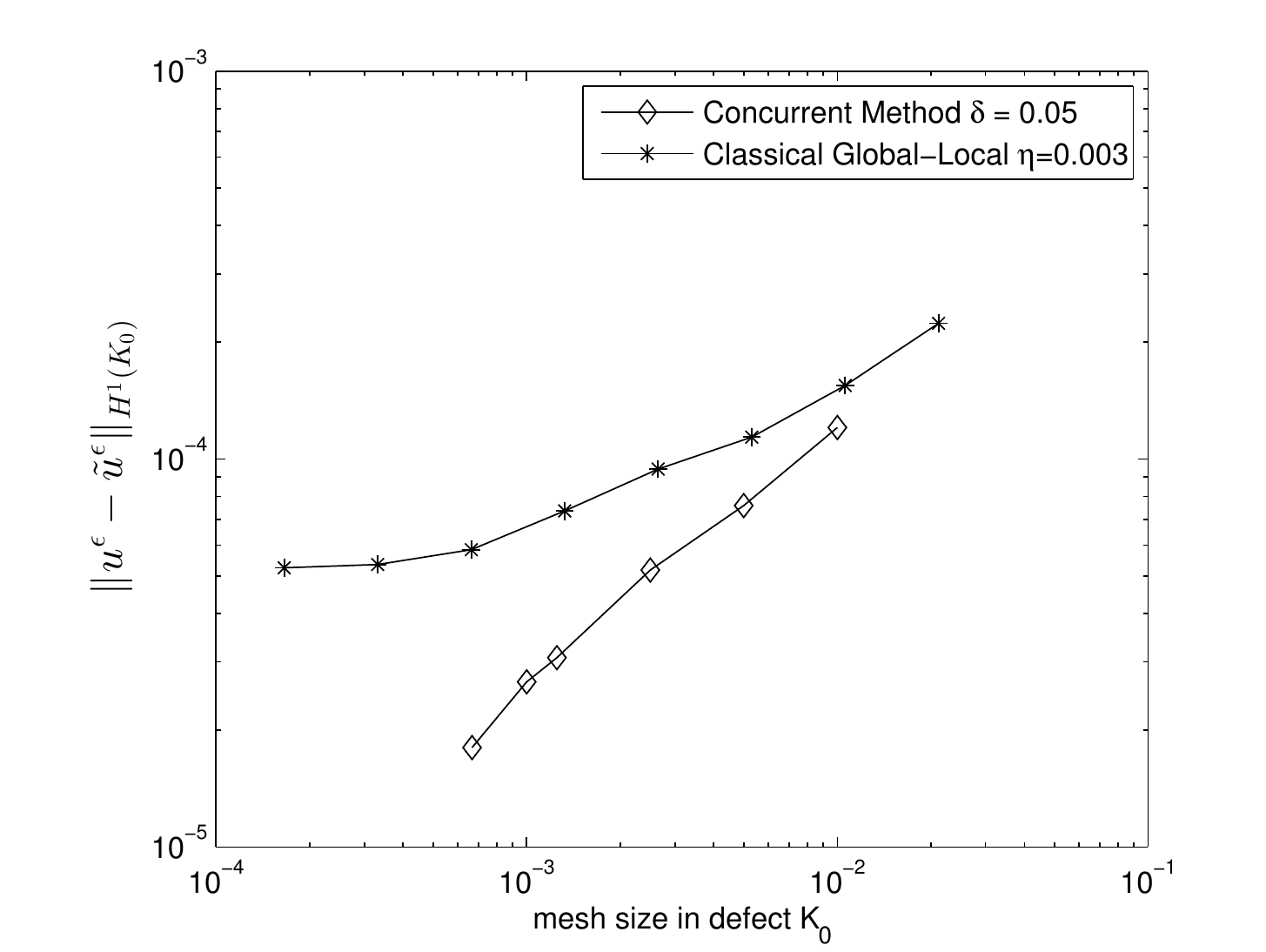}}
		\centerline{(b)}
	\end{minipage}	
	\caption{(a) Localized H$^1$ error inside $K_0$ for Example~\ref{subsec:1ex} by the global-local method with different $\eta$. (b) Comparison of the localized H$^1$ error inside $K_0$ for Example~\ref{subsec:1ex} with the global-local approach and the concurrent method.}
	\label{Ex2Fig2}
\end{figure}
%
\section{Conclusion}
We propose a new hybrid method that retrieves the global macroscopic
information and resolves the local events simultaneously. The
efficiency and accuracy of the proposed method have been demonstrated
for problems with or without scale separation. The rate of convergence
has been established when the coefficient is either periodic or
almost-periodic.

For possible future directions, the formulation of the method can be
naturally extended to treat problems with finite number of localized
defects, the random coefficients and also time-dependent problems. It
is also interesting to study the case when the local mesh inside the
defect domain is not body-fitted, which can be done with the aid of
the existing methods for elliptic interface problem; See e.g.,~\cite
{Gunzman:2016}. We shall leave these for further exploration.
%
\begin{appendix}
\section{Example}~\label{subsec:example}
To better appreciate the
estimates~\eqref{eq:diserrhomo1} and~\eqref{eq:diserrhomo2}, which are crucial in our analysis, let us consider a one-dimensional problem
\[
\left\{
\begin{aligned}
& -\Lr{\a(x)u'(x)}^{\prime}=0,\quad x\in(0,1),\\
& u(0)=0,\quad\a(1)u'(1)=1,
\end{aligned}\right.
\]
where $\a(x)=2+\sin(x/\eps)$. A direct calculation gives that the effective coefficient $\mc{A}=\sqrt3$ and the solution of the homogenized problem is $\cu(x)=x/\mc{A}$.

We consider a uniform mesh given by
\[
x_0=0<x_1=h<\cdots<x_i=ih<\cdots<x_{2N}=1,
\]
where $h=1/(2N)$. The finite element space $X_h$ is simply the piecewise linear element associated with the above mesh with zero boundary
condition at $x=0$.

\smallskip
\noindent \textit{Case $h \gg \eps$.}
We firstly consider the case that $h\gg\eps$, while the precise relation
between $h$ and $\eps$ will be made clear below. Denote $v_h(x_j)=v_j$
and the interval $I_j=(x_{j-1},x_j)$, the mean of the coefficients
$\bb$ over each $I_j$ is denoted by $b_j=\negint_{I_j}\bb(x)\dx$.

We define the transition function $\rho$ as a piecewise linear  function that is supported in $(-2L,2L)$, where $L$ is a fixed number with $0<L<1/4$. Without loss of generality, we assume that $L=Mh$ with $M$ an integer. In particular,
\[
\rho(x)=\left\{\begin{aligned}
0&\quad 0\le x\le x_{N-2M},\\
\dfrac{x-x_{N-2M}}{L}&\quad x_{N-2K}\le x\le x_{N-M},\\
1&\quad x_{N-M}\le x\le x_{N+M},\\
\dfrac{x_{N+2M}-x}{L}&\quad x_{N+M}\le x\le x_{N+2M},\\
0&\quad x_{N+2M}\le x\le x_{2N}=1.
\end{aligned}\right.
\]
By construction, we get the size of the support of $\rho$ is $\abs{K}=4L$.

We easily obtain the linear system for $\{v_j\}_{j=1}^{2N}$ as
\[
\left\{
\begin{aligned}
-b_jv_{j-1}+(b_j+b_{j+1})v_j-b_{j+1}v_{j+1}&=0,\quad j=1,\cdots,2N-1,\\
-b_{2N}v_{2N-1}+b_{2N}v_{2N}&=h.
\end{aligned}\right.
\]
Define $c_j{:}=(v_j-v_{j-1})b_j/h$, we rewrite the above equation as
\[
c_j-c_{j-1}=0,\quad j=1,\cdots,2N-1,\qquad c_{2N}=1.
\]
Hence $c_j=1$ for $j=1,\cdots,2N$, and the above linear system reduces to
\[
(v_j-v_{j-1})b_j=h.
\]
Using $v_0=0$, we obtain
\begin{equation}\label{eq:exactsolu}
v_j=h\sum_{i=1}^j\dfrac1{b_i}.
\end{equation}
Observing that $v_h(x)=\cu(x)$ for $x\in[0,x_{N-2M}]$ because they are linear functions that coincide at all the nodal points $x_i$ for $i=0,\cdots, N-2M$.

For $x\in I_{N-2M+j+1}$, we obtain
\[
\cu(x)-v_h(x)=h\sum_{i=1}^j\Lr{\dfrac1{\mc{A}}-\dfrac{1}{b_{N-2M+i}}}+(x-x_{N-2M+j})\Lr{\dfrac1{\mc{A}}-\dfrac1{b_{N-2M+j+1}}}.
\]
Define $S_j{:}=h\sum_{i=1}^j\Lr{\dfrac1{\mc{A}}-\dfrac{1}{b_{N-2M+i}}}$, we rewrite the above equation as
\begin{equation}\label{eq:func}
\cu(x)-v_h(x)=\dfrac{x_{N-2M+j+1}-x}{h}S_j+\dfrac{x-x_{N-2M+j+1}}{h}S_{j+1},
\end{equation}
which immediately yields
\begin{equation}\label{eq:examh1err}
\begin{aligned}
\int_{x_{N-2M}}^{x_{N-M}}\abs{\cu'(x)-v_h'(x)}^2\dx&=h\sum_{j=1}^M\abs{\dfrac1{\mc{A}}-\dfrac1{b_{N-2M+j}}}^2\\
&\ge\dfrac{h}{27}\sum_{j=1}^M\abs{\mc{A}-b_{N-2M+j}}^2.
\end{aligned}
\end{equation}
This is the starting point of later derivation. A direct calculation gives
\begin{align*}
b_{N-2M+j}-\mc{A}&=\negint_{I_{N-2M+j}}\rho(x)(a^\eps(x)-\mc{A})\dx\\
&=\dfrac{2-\mc{A}}{2}\bigl(\rho(x_{N-2M+j-1})+\rho(x_{N-2M+j})\bigr)+\negint_{I_{N-2M+j}}\sin\dfrac{x}\eps\dx\\
&=\dfrac{(2-\mc{A})h}{2L}(2j-1)+\negint_{I_{N-2M+j}}\sin\dfrac{x}\eps\dx,
\end{align*}
and an integration by parts yields
\begin{align*}
\negint_{I_{N-2M+j}}\sin\dfrac{x}\eps\dx&=\dfrac{2j\eps}{L}\sin\dfrac{h}{2\eps}\sin\dfrac{x_{N-2M+j-1/2}}{\eps}\\
&\quad-\dfrac{\eps}{L}\cos\dfrac{x_{N-2M+j-1}}{\eps}+\dfrac{\eps^2}{Lh}\Lr{\cos\dfrac{x_{N-2M+j-1}}{\eps}
-\cos\dfrac{x_{N-2M+j}}{\eps}}.
\end{align*}
Combining the above two equations, we obtain
\begin{equation}\label{eq:examerr}
b_{N-2M+j}-\mc{A}=\dfrac{(2-\mc{A})h}{2L}(2j-1)+\dfrac{2j\eps}{L}\sin\dfrac{h}{2\eps}\sin\dfrac{x_{N-2M+j-1/2}}{\eps}+\text{REM},
\end{equation}
where the remainder term
\[
\text{REM}{:}=-\dfrac{\eps}{L}\cos\dfrac{x_{N-2M+j-1}}{\eps}+\dfrac{\eps^2}{Lh}\Lr{\cos\dfrac{x_{N-2M+j-1}}{\eps}
-\cos\dfrac{x_{N-2M+j}}{\eps}},
\]
which can be bounded as
\begin{align*}
\abs{\text{REM}}&\le\dfrac{\eps}{L}+\dfrac{2\eps^2}{Lh}\abs{\sin\dfrac{h}{2\eps}}\abs{\cos\dfrac{x_{N-2M+j-1/2}}{\eps}}\\
&\le\dfrac{\eps}{L}+\dfrac{2\eps^2}{Lh}\dfrac{h}{2\eps}=\dfrac{2\eps}{L}.
\end{align*}
Note that
\(
\sum_{j=1}^M(2j-1)^2=M(4M^2-1)/3,
\)
and
\[
\sum_{j=1}^Mj^2\sin^2\dfrac{x_{N-2M+j-1/2}}{\eps}
\le\sum_{j=1}^Mj^2=\dfrac{1}{6}M(M+1)(2M+1).
\]
Summing up all the above estimates and using the elementary inequality
\[
(a+b+c)^2+b^2+c^2\ge \dfrac{a^2}3\quad\text{for any\quad}a,b,c\in\RR,
\]
we have, for $M\ge 3$,
\begin{align*}
\sum_{j=1}^M\abs{\mc{A}-b_{N-2M+j}}^2&\ge\dfrac1{3}\dfrac{(2-\mc{A})^2h^2}{4L^2}\sum_{j=1}^M(2j-1)^2-
\dfrac{4\eps^2}{L^2}\sin^2\dfrac{h}{2\eps}\sum_{j=1}^Mj^2\sin^2\dfrac{x_{N-2M+j-1/2}}{\eps}\\
&\quad-\dfrac{4Mh\eps^2}{L^2}\\
&\ge\dfrac{(2-\mc{A})^2h^2}{36L^2}M(4M^2-1)-\dfrac{2\eps^2}{3L^2}M(M+1)(2M+1)-\dfrac{4M\eps^2}{L^2}\\
&\ge\dfrac{(2-\mc{A})^2h^2}{36L^2}M(4M^2-1)-\dfrac{2\eps^2}{3L^2}M(4M^2-1)\\
&\ge\dfrac{(2-\mc{A})^2h^2}{72L^2}M(4M^2-1)
\end{align*}
provided that $\eps/h\le (2-\mc{A})/(4\sqrt{3})$. Substituting the above estimate into~\eqref{eq:examh1err}, we obtain
\begin{align*}
\int_{x_{N-2M}}^{x_{N-M}}\abs{\cu'(x)-v_h'(x)}^2\dx
&\ge\dfrac{(2-\mc{A})^2h^3}{1944L^2}M(4M^2-1)\\
&\ge\dfrac{(2-\mc{A})^2h^3}{648L^2}M^3=\dfrac{(2-\mc{A})^2}{648}L.
\end{align*}
This implies
\[
\nm{\cu'-v_h'}{L^2(1/2-2L,1/2-L)}\ge\dfrac{2-\mc{A}}{18\sqrt{2}}L^{1/2}=\dfrac{2-\mc{A}}{36\sqrt{2}}\abs{K}^{1/2}.
\]
This shows that the factor $\abs{K}^{1/2}$ in~\eqref{eq:diserrhomo1} is sharp.
The same argument shows the size-dependence of $\abs{K}$ in the estimate~\eqref{eq:diserrhomo2}.


\smallskip
\noindent\textit{Case $h \ll \eps$.}
We next consider the case when $h\ll\eps$. In fact, we may employ coarser mesh with mesh size $H$ outside the defect region with $H\gg h$, while a finer mesh with mesh size $h$ inside the defect region. The above derivation remains true and we still have $v_h(x)=\cu(x)$ for $x\in [0,1/2-2L]$. We start from the inequality~\eqref{eq:examh1err}. Notice that the dominant term in the expression of $b_{N-2M+j}-\mc{A}$ is the oscillatory one in~\eqref{eq:examerr}. Denote $\phi=2h/\eps$. A direct calculation gives
\begin{align*}
&\quad\sum_{j=1}^Mj^2\sin^2\dfrac{x_{N-2M+j-1/2}}\eps=\dfrac12\sum_{j=1}^Mj^2-\dfrac12\sum_{j=1}^M\cos\dfrac{x_{2N-4M+2j-1}}\eps\\
&=\dfrac1{12}M(M+1)(2M+1)\\
&\quad-\biggl\{\dfrac{M(M+1)}{4\sin(\phi/2)}\sin[(N-M)\phi]+\dfrac{M+1}{4\sin^2\phi/2}\cos[(N-M)\phi]\cos\dfrac{\phi}{2}\\
&\qquad\quad-\dfrac{\cos[(N-3M/2-1)\phi]\cos\dfrac{\phi}{2}\sin\dfrac{M+1}{2}\phi}{4\sin^3(\phi/2)}\biggr\}.
\end{align*}
We assume that
\begin{equation}\label{eq:cond1}
\sin\dfrac{\phi}{2}\ge\dfrac{5}{M}.
\end{equation}
Denote the terms in the curled bracket by $I$. Given~\eqref{eq:cond1}, using the
elementary inequalities $2x/\pi\le\sin x\le x$ for $x\in[0,\pi/2]$, we bound $I$
as
\begin{align*}
\abs{I}&\le\dfrac{(M+1)M}{4\sin(\phi/2)}+\dfrac{M+1}{4\sin^2(\phi/2)}+\dfrac{(M+1)\phi/2}{4\sin^3(\phi/2)}\\
&\le\dfrac{(M+1)M}{4\sin(\phi/2)}+\dfrac{M+1}{4\sin^2(\phi/2)}+\dfrac{(M+1)\pi}{8\sin^2(\phi/2)}\\
&\le\dfrac{M^2(M+1)}{12},
\end{align*}
which immediately yields
\[
\sum_{j=1}^Mj^2\sin^2\dfrac{x_{N-2M+j-1/2}}\eps\ge\dfrac{M^3}{12}.
\]
This implies
\[
\dfrac{4\eps^2}{L^2}\sin^2\dfrac{h}{2\eps}\sum_{j=1}^Mj^2\sin^2\dfrac{x_{N-2M+j-1/2}}\eps\ge
\dfrac{4\eps^2}{L^2}\Lr{\dfrac2{\pi}\dfrac{h}{2\eps}}^2\dfrac{M^3}{12}=\dfrac{M}{3\pi^2}.
\]
Note also
\[
\dfrac{(2-\mc{A})^2h^2}{4L^2}\sum_{j=1}^M(2j-1)^2\le\dfrac{(2-\mc{A})^2}{3}M.
\]
Combining the above two estimates, we obtain
\begin{align*}
\sum_{j=1}^M\abs{\mc{A}-b_{N-2M+j}}^2&\ge\dfrac{1}2\sum_{j=1}^M
\Lr{\dfrac{2\eps}{L}\sin\dfrac{h}{2\eps}j\sin\dfrac{x_{N-2M+j-1/2}}\eps+\dfrac{(2-\mc{A})h}{2L}(2j-1)}^2\\
&\quad-\dfrac{4M\eps^2}{L^2}\\
&\ge\Lr{\dfrac{1}{6}\Lr{1/\pi+\mc{A}-2}^2-\dfrac{4\eps^2}{L^2}}M>0
\end{align*}
provided that
\[
h>\dfrac{\sqrt{6}\eps}{(1/\pi+\mc{A}-2)M}.
\]
This condition suffices for the validity of~\eqref{eq:cond1}, which is satisfied under a weaker condition
\(
h>{5\pi\eps}/(2M).
\)

Substituting the above estimate into~\eqref{eq:examh1err}, we may find that there exists $C$ depending only on $\mc{A}$ such that
\[
\nm{\cu'-v_h'}{L^2(1/2-2L,1/2-L)}\ge CL^{1/2}=\dfrac{C}{2}\abs{K}^{1/2}.
\]
This proves that the factor $\abs{K}^{1/2}$ is sharp for~\eqref{eq:diserrhomo1}.
The same argument shows the size-dependence of $\abs{K}$ in the estimate~\eqref{eq:diserrhomo2}.
\end{appendix}
\bibliographystyle{amsxport}
\bibliography{mulspde}
\end{document}